\newcommand*{\rom}[1]{\expandafter\@slowromancap\romannumeral #1@}
  \theoremstyle{plain}
    \newtheorem{thm}{Theorem}[section]
    \newtheorem{proposition}[thm]{Proposition}
    \newtheorem{subsec}[thm]{}
\theoremstyle{definition}
    \newtheorem{definition}[thm]{Definition}
        \newtheorem{remark}[thm]{Remark}
    \newtheorem{exam}[thm]{Example}
\theoremstyle{remark}
\newcommand{\dcb}[2]{\{\!\!\{#1, #2\}\!\!\}}
\title{}
\author{}
\date{}
\begin{document}

\title{Nijenhuis deformations of Poisson algebras and $F$-manifold algebras}

\author{Anusuiya Baishya}
\address{Department of Mathematics,
Indian Institute of Technology, Kharagpur 721302, West Bengal, India.}
\email{anusuiyabaishya530@gmail.com}

\author{Apurba Das}
\address{Department of Mathematics,
Indian Institute of Technology, Kharagpur 721302, West Bengal, India.}
\email{apurbadas348@gmail.com, apurbadas348@maths.iitkgp.ac.in}

\maketitle


\begin{abstract}
   The notion of pre-Poisson algebras was introduced by Aguiar in his study of zinbiel algebras and pre-Lie algebras. In this paper, we first introduce NS-Poisson algebras as a generalization of both Poisson algebras and pre-Poisson algebras. An NS-Poisson algebra has an associated sub-adjacent Poisson algebra. We show that a Nijenhuis operator and a twisted Rota-Baxter operator on a Poisson algebra deforms the structure into an NS-Poisson algebra. The semi-classical limit of an NS-algebra deformation and a suitable filtration of an NS-algebra produce NS-Poisson algebras. On the other hand, $F$-manifold algebras were introduced by Dotsenko as the underlying algebraic structure of $F$-manifolds. We also introduce NS-$F$-manifold algebras as a simultaneous generalization of NS-Poisson algebras, $F$-manifold algebras and pre-$F$-manifold algebras. In the end, we show that Nijenhuis deformations of $F$-manifold algebras and the semi-classical limits of NS-pre-Lie algebra deformations have NS-$F$-manifold algebra structures.
\end{abstract}

\medskip

\medskip
\begin{center}

    {\em 2020 MSC classification.} 17B63, 17B40, 16S80.
    
    {\em Keywords.} Poisson algebras, NS-Poisson algebras, Nijenhuis operators, Twisted Rota-Baxter operators, NS-$F$-manifold algebras.

    \end{center}

    \tableofcontents

\section{Introduction}

\subsection{Poisson algebras and {\em F}-manifold algebras} Poisson manifolds appear naturally in Hamiltonian mechanics and are also central objects in the study of deformation quantization \cite{kont}. Poisson algebras are the underlying algebraic structure of Poisson manifolds. Explicitly, a {\em Poisson algebra} is a commutative associative algebra $(A, \cdot)$ equipped with a Lie bracket $\{ ~, ~ \} : A \times A \rightarrow A$ that satisfies the Leibniz rule:
\begin{align}\label{leib-rule}
    \{ x, y \cdot z \} = \{ x, y \} \cdot z +  y \cdot \{ x, z \}, \text{ for } x, y, z \in A.
\end{align}
The algebra of observables in symplectic geometry, the space of smooth functions on the dual of a Lie algebra, vertex operator algebras and the semi-classical limit of any associative formal deformations of commutative associative algebras are examples of Poisson algebras. Representations of Poisson algebras are also extensively studied in the literature \cite{caressa}, \cite{skryabin}. Recall that a {\em representation} of a Poisson algebra $(A, \cdot , \{~,~\})$ is a triple $(V, \mu , \rho)$ in which $(V, \mu)$ is a representation of the commutative associative algebra $(A, \cdot)$ and $(V, \rho)$ is a representation of the Lie algebra $(A, \{~,~\})$ satisfying additionally 
\begin{align}
    \rho_x \circ \mu_y - \mu_y \circ \rho_x - \mu_{ \{x,y\} }=0, \label{rep-pois1}\\
    \mu_x \circ \rho_y+ \mu_y \circ \rho_x - \rho_{ x\cdot y}=0, \label{rep-pois2}
\end{align}
for all $x, y \in A$.
See also \cite{kosmann} for more details about Poisson algebras.

On the other hand, the notion of an {\em $F$-manifold} (also called {\em weak Frobenius manifold}) was introduced by Hertling and Manin \cite{hertling-manin} as a generalization of Frobenius manifold that appeared in $2$-dimensional topological field theories. An {\em $F$-manifold} is a pair $(M, \circ)$ consisting of a smooth supermanifold $M$ and a smooth bilinear commutative associative product $\circ$ on the tangent sheaf $TM$ such that
\begin{align*}
    P_{X_1 \circ X_2} (X_3, X_4) = X_1 \circ P_{X_2} (X_3, X_4) + (-1)^{|X_1| | X_2 |} X_2 \circ P_{X_1} (X_3, X_4),
\end{align*}
where $P_{X_1} (X_2, X_3) = [X_1, X_2 \circ X_3] - [X_1, X_2] \circ X_3 - (-1)^{|X_1| |X_2|} X_2 \circ [X_1, X_3]$ measures to what extent the product $\circ$ and the Lie bracket $[~,~]$ of vector fields fail to satisfy the Leibniz rule of a Poisson algebra (see also \cite{merkulov}). In \cite{dotsenko}, Dotsenko first considered the notion of an {\em $F$-manifold algebra} as the underlying algebraic structure of the $F$-manifolds. He further showed that the graded object of the filtration of the operad {\em PreLie} is the operad {\em FMan} encoding $F$-manifold algebras. It has been observed by Liu, Sheng and Bai \cite{liu-sheng-bai} that $F$-manifold algebras are the semi-classical limits of pre-Lie formal deformations of commutative pre-Lie algebras. Thus, it follows that $F$-manifold algebras are related to pre-Lie algebras in the same way Poisson algebras are related to associative algebras.

\subsection{Rota-Baxter deformations and pre-algebras}
The notion of {\em Rota-Baxter operators} originated in the probability study of Baxter \cite{baxter}, promoted by the combinatorial study of Rota \cite{rota}, and found important applications in mathematics and mathematical physics. In \cite{aguiar}, Aguiar first observed that a Rota-Baxter operator on an associative algebra deforms the structure into a {\em dendriform algebra}, a structure that splits the usual associativity. More generally, a Rota-Baxter operator on a class of binary quadratic nonsymmetric operads gives the black square product of dendriform algebra with these operads \cite{fard-guo,vall}. For arbitrary binary quadratic operads, the above result was generalized in \cite{bai}, hence setting the general operadic definition for the notion of splitting of algebraic structures. More generally, given a binary quadratic operad $\mathcal{P}$, they consider an operad $\mathrm{BSu}(\mathcal{P})$ the bisuccessor of $\mathcal{P}$. It has been observed that the operad $\mathrm{BSu}(\mathcal{P})$ is equivalent to taking the Manin black product of the operad $PreLie$ and the operad $\mathcal{P}$. The algebras over the operad $\mathrm{BSu}(\mathcal{P})$ are called pre-$\mathcal{P}$-algebras. When $\mathcal{P} = Ass, Com, Lie$, the corresponding pre-$\mathcal{P}$-algebras are respectively dendriform algebras, zinbiel algebras and pre-Lie algebras. When $\mathcal{P} = PreLie$, the corresponding pre-$\mathcal{P}$-algebra is called an {\em L-dendriform algebra} introduced in \cite{bai-l}. In \cite{aguiar}, Aguiar introduced the notion of a {\em pre-Poisson algebra} by combining a zinbiel algebra and a pre-Lie algebra structure on the same vector space. Pre-Poisson algebras are nothing but pre-$\mathcal{P}$-algebras for $\mathcal{P} = Poiss$, the operad for Poisson algebras. He also showed that the semi-classical limit of dendriform deformation of a zinbiel algebra inherits a pre-Poisson algebra structure.


\subsection{Nijenhuis deformations, twisted Rota-Baxter deformations and NS-algebras}
The notion of {\em NS-algebras} (also called NS-associative algebras) was introduced by Leroux \cite{leroux} in the study of Nijenhuis operators on associative algebras. Roughly, an NS-algebra is a vector space equipped with three bilinear operations $\prec, \succ$ and $\curlyvee$ satisfying certain axioms that imply that the new operation $\ast = \prec + \succ + \curlyvee$ is associative. NS-algebras generalize dendriform algebras and split the associativity. In \cite{uchino}, Uchino also considered NS-algebras and showed that NS-algebras can be obtained from twisted Rota-Baxter operators (in particular, Reynolds operators) on associative algebras. Recently, the author in \cite{das} introduced the notion of an {\em NS-Lie algebra} while studying Nijenhuis operators and twisted Rota-Baxter operators on Lie algebras. NS-Lie algebras simultaneously generalize Lie algebras and pre-Lie algebras. Some more details about NS-algebras, NS-Lie algebras and other generalizations can be found in \cite{das,das2,guo-lei,ospel}. The above discussions lead to the question about the effect of Nijenhuis operators and twisted Rota-Baxter operators on a binary quadratic operad $\mathcal{P}$. This is equivalent to understanding NS-$\mathcal{P}$-algebras for any binary quadratic operad $\mathcal{P}$. However, unlike pre-$\mathcal{P}$-algebras, there has been no well-known description of NS-$\mathcal{P}$-algebras in terms of the operad $\mathcal{P}$. Thus, to understand NS-$\mathcal{P}$-algebras for some given $\mathcal{P}$, one needs to explicitly write down the effect of Nijenhuis operators on $\mathcal{P}$-algebras. For instance, the authors in \cite{guo-zhang} considered {\em NS-pre-Lie algebras} in their study of Nijenhuis operators and twisted Rota-Baxter operators on pre-Lie algebras.

\subsection{NS-Poisson algebras and NS-{\em F}-manifold algebras}
Our aim in this paper is to study the effect of Nijenhuis operators on Poisson algebras and $F$-manifold algebras. We first introduce the notion of an {\em NS-Poisson algebra} by combining NS-commutative algebra and NS-Lie algebra on the same vector space (cf. Definition \ref{nspois}). Our notion of NS-Poisson algebras generalizes both Poisson algebras and pre-Poisson algebras. We show that an NS-Poisson algebra gives rise to a Poisson algebra structure on the underlying vector space, called the {\em subadjacent} Poisson algebra (cf. Theorem \ref{prop-subadj-p}). Thus, an NS-Poisson algebra can be realized as a splitting of a Poisson algebra. On the other hand, any Nijenhuis operator on a Poisson algebra deforms the structure into an NS-Poisson algebra (cf. Proposition \ref{prop-nij-nsp}). It follows that an NS-Poisson algebra is the underlying structure of Nijenhuis deformations of a Poisson algebra. Next, we consider twisted Rota-Baxter operators on Poisson algebras and show that they also induce NS-Poisson algebras (cf. Theorem \ref{thm-twisted-rota-ns}). Conversely, any NS-Poisson algebra arise from a twisted Rota-Baxter operator (cf. Proposition \ref{prop-last}). Since a Reynolds operator on a Poisson algebra can be regarded as a twisted Rota-Baxter operator, it also induces an NS-Poisson algebra structure (cf. Proposition \ref{prop-rey-nsp}). Next, we consider NS-algebra deformations of an NS-commutative algebra and show that NS-Poisson algebras are the corresponding semi-classical limit (cf. Theorem \ref{thm-defor}). Finally, we show that a suitable filtration of an NS-algebra induces an NS-Poisson algebra structure on the associated graded vector space (cf. Theorem \ref{thm-filt}).

By viewing $F$-manifold algebras as a generalization of Poisson algebras, one could also study the effect of Nijenhuis operators on an $F$-manifold algebra. In another part of the paper, we introduce the notion of an {\em NS-$F$-manifold algebra} as a simultaneous generalization of $F$-manifold algebras, pre-$F$-manifold algebras \cite{liu-sheng-bai} and NS-Poisson algebras. Any NS-$F$-manifold algebra naturally gives rise to an $F$-manifold algebra structure (cf. Theorem \ref{thm-subadj-nsf}). We show that NS-$F$-manifold algebras are the underlying structure of Nijenhuis deformations and Reynolds deformations of an $F$-manifold algebra (cf. Propositions \ref{prop-nij-nsf} and \ref{prop-rey-nsf}). Among other results, we observe that NS-$F$-manifold algebras are related to NS-pre-Lie algebras in the same way $F$-manifold algebras are related to pre-Lie algebras. Explicitly, we consider NS-pre-Lie deformations of an NS-commutative algebra (which can be viewed as an NS-algebra and hence as an NS-pre-Lie algebra) and show that NS-$F$-manifold algebras are the corresponding semi-classical limit (cf. Theorem \ref{thm-nsf-semi-classical}). In particular, pre-$F$-manifold algebras are the semi-classical limit of L-dendriform deformations of zinbiel algebras. These results together with some previously known results can be summarized in the following diagram:

\medskip

\[
\xymatrix{
\text{Poisson algebra} \ar@{^{(}->}[d] & &  \text{commutative algebra}   \ar@{^{(}->}[d] \ar[rr]^{\substack{\text{pre-Lie}\\ \text{deformation}}}_{ \text{semi-classical}} \ar[ll]_{\substack{\text{associative}\\ \text{deformation}}}^{ \text{semi-classical}} & & F\text{-manifold algebra}  \ar@{^{(}->}[d] \\
\text{NS-Poisson algebra} & & \text{NS-commutative algebra} \ar[rr]^{\substack{\text{NS-pre-Lie}\\ \text{deformation}}}_{ \text{semi-classical}} \ar[ll]_{\substack{\text{NS-algebra}\\ \text{deformation}}}^{ \text{semi-classical}} & & \text{NS-}F\text{-manifold algebra}\\
\text{pre-Poisson algebra} \ar@{^{(}->}[u] & & \text{zinbiel algebra}  \ar@{^{(}->}[u] \ar[rr]^{\substack{\text{L-dendriform}\\ \text{deformation}}}_{ \text{semi-classical}} \ar[ll]_{\substack{\text{dendriform}\\ \text{deformation}}}^{ \text{semi-classical}} & & \text{pre-}F\text{-manifold algebra.}  \ar@{^{(}->}[u] \\
}
\]

\medskip

\medskip

In \cite{aguiar}, Aguiar also introduced the notion of a dual pre-Poisson algebra. It has been observed that an averaging operator on a Poisson algebra gives rise to a dual pre-Poisson algebra. Generalizing his results, the authors of \cite{liu-sheng-bai} introduced dual pre-$F$-manifold algebras. In both the above formulations, it is used that the operad {\em Poiss} of Poisson algebras is self-dual and the dual of pre-$\mathcal{P}$ is the operad encoding the structures obtained by applying averaging operators on $\mathcal{P}$-algebras. The last statement means that Rota-Baxter operators and averaging operators are dual to each other in an appropriate sense. In a subsequent paper, we will demonstrate Nijenhuis operators from operadic points of view and find their dual. Using this, we aim to formulate dual NS-Poisson algebras and dual NS-$F$-manifold algebras.


\subsection{Organization of the paper}
    The paper is organized as follows. In section \ref{section2}, we recall pre-Poisson algebras, NS-algebras, NS-Lie algebras and Nijenhuis operators along with some relevant results. In section \ref{section3}, we introduce NS-Poisson algebras and explore their relations with Poisson algebras, Nijenhuis operators, twisted Rota-Baxter operators and Reynolds operators. We also introduce and study some basic properties of NS-Gerstenhaber algebras as a graded version of NS-Poisson algebras. In section \ref{section4}, we show that the semi-classical limit of an NS-algebra deformation of a given NS-commutative algebra carries an NS-Poisson algebra structure. We also show that a suitable filtration of an NS-algebra gives rise to an NS-Poisson algebra structure on the associated graded space. We begin section \ref{section5} by introducing the notion of NS-$F$-manifold algebras and find their relations with $F$-manifold algebras, Nijenhuis operators and Reynolds operators. In the end, we prove that NS-pre-Lie deformations of an NS-commutative algebra gives rise to NS-$F$-manifold algebras as semi-classical limits.

    All vector spaces, (multi)linear maps and tensor products are over a field {\bf k} of characteristic $0$.

\section{Pre-algebras and NS-algebras} \label{section2}
In this section, we recall various pre-algebras (e.g. zinbiel algebras, pre-Lie algebras and pre-Poisson algebras), NS-algebras and NS-Lie algebras. 

    \begin{definition}
        A \textbf{(left) zinbiel algebra} is a pair $(A, \ast)$ of a vector space $A$ with a bilinear operation $\ast: A \times A \rightarrow A$ satisfying
            \begin{equation}
                x \ast (y \ast z) = (x \ast y + y \ast x ) \ast z,  \text{ for } x, y, z \in A.
            \end{equation}
    \end{definition}

    \begin{definition}
        A \textbf{(left) pre-Lie algebra} is a pair $(A, \diamond)$ consisting of a vector space $A$ equipped with a bilinear operation $\diamond: A \times A \rightarrow A$ satisfying the following identity: 
            \begin{equation}
                x \diamond (y \diamond z) - (x \diamond y) \diamond z = y \diamond (x \diamond z) - (y \diamond x ) \diamond z, \text{ for } x, y, z \in A.
            \end{equation}
    \end{definition}

    \begin{definition}
        A \textbf{(left) pre-Poisson algebra} is a triple $(A, \ast, \diamond)$ in which $ (A, \ast)$ is a zinbiel algebra and $(A,\diamond )$ is a pre-Lie algebra satisfying the following compatibilities: for any $x, y, z \in A$,
            \begin{align}
                 (x \diamond y - y \diamond x) \ast z =~& x \diamond ( y \ast z )- y \ast (x \diamond z),\\
                (x \ast y + y \ast x) \diamond z =~& x \ast (y \diamond z ) + y \ast (x \diamond z).
            \end{align}
            \end{definition}

            Let $(A, \ast, \diamond)$ be a pre-Poisson algebra. Then it has been shown in \cite{aguiar} that $(A, \cdot, \{ ~, ~ \})$ becomes a Poisson algebra, where
            \begin{align*}
                x \cdot y := x \ast y + y \ast x \quad \text{ and } \quad \{ x, y \} := x \diamond y - y \diamond x, \text{ for } x, y \in A.
            \end{align*}
            This is called the sub-adjacent Poisson algebra of the pre-Poisson algebra $(A, \ast, \diamond)$.

            See \cite{aguiar} for some examples of pre-Poisson algebras. In particular, a Rota-Baxter operator on a Poisson algebra gives rise to a pre-Poisson algebra. Next, we consider NS-algebras introduced by Leroux \cite{leroux} and NS-Lie algebras introduced in \cite{das}.

    \begin{definition} \label{nsalg}
         An {\bf NS-associative algebra} (or simply an {\bf NS-algebra}) is a quadruple $(A, \prec , \succ, \curlyvee)$ consisting of a vector space $A$ with three bilinear operations $\prec , \succ, \curlyvee : A \times A \rightarrow A$ subject to satisfy the following identities:
            \begin{align}
                (x \prec y) \prec z =&~ x \prec ( y \odot z), \label{nsalg1}\\
                (x \succ y ) \prec z =&~ x \succ ( y \prec z),\\
                (x \odot y) \succ z =&~ x \succ( y \succ z ),\\
                (x \curlyvee y ) \prec z + (x \odot y) \curlyvee z =&~ x \succ (y \curlyvee z) + x \curlyvee (y \odot z), \label{nsalg4}
            \end{align}
        for $x, y, z \in A$, where we used the notation $x \odot y = x \prec y + x \succ y + x \curlyvee y$.
    \end{definition}
    
    \begin{remark}
        Let $(A, \prec , \succ , \curlyvee)$ be an NS-algebra. Then it follows from the identities (\ref{nsalg1})-(\ref{nsalg4}) that the bilinear operation $\odot$ is associative.
    \end{remark}

    For an NS-algebra $(A, \prec, \succ, \curlyvee )$, if the symmetries $x \prec y = y \succ x$ and $x \curlyvee y = y \curlyvee x$ hold for all $x, y \in A$, then we call it as an NS-commutative algebra. Thus, an {\bf NS-commutative algebra} can be described by a triple $(A, \ast, \curlyvee)$, where $\ast , \curlyvee : A \times A \rightarrow A$ are bilinear operations in which $\curlyvee$ is commutative (i.e. $x \curlyvee y = y \curlyvee x$, for all $x, y \in A$) satisfying the following identities:
        \begin{align}
            x \ast ( y \ast z) =~& (x \odot y) \ast z,\\
            x \ast ( y \curlyvee z) + x \curlyvee ( y \odot z) =~& y \ast ( x \curlyvee z) + y \curlyvee ( x \odot z),
        \end{align}
        for all $x, y , z \in A$. Here $x\odot y= x\ast y+y\ast x+x\curlyvee y$.


     Any NS-commutative algebra $(A, \ast, \curlyvee)$ with trivial $\curlyvee$ is simply a zinbiel algebra. Thus, NS-commutative algebras are a generalization of zinbiel algebras.

    \begin{remark} \label{repcom}
        Let $(A, \ast, \curlyvee)$ be an NS-commutative algebra. Then $(A, \odot)$ is a commutative associative algebra, called the subadjacent algebra. Moreover, the map $\mu : A \rightarrow \mathrm{End}(A), x \mapsto \mu_x $ given by
            \begin{align} \label{mu}
                \mu_x(y) :=x\ast y, \text{~~~for~} x, y \in A,
            \end{align}
       defines a representation of the commutative associative algebra $(A, \odot)$ on the vector space $A$.
    \end{remark}

    Let $(A, \cdot)$ be a commutative associative algebra. A linear map $N:A \rightarrow A$ is said to be a {\bf Nijenhuis operator} on $A$ if it satisfies
        \begin{align} \label{ncom}
            N(x) \cdot N(y) = N( N(x) \cdot y + x \cdot N(y) - N(x\cdot y) ), \text{ for all } x, y \in A.
        \end{align}
 \begin{proposition} \label{nc}
        Let $(A, \cdot)$ be a commutative associative algebra and $N: A\rightarrow A$ be a Nijenhuis operator on it. We define bilinear operations $\ast , \curlyvee : A \times A \rightarrow A$ by
            \begin{align*}
                x \ast y = N(x)\cdot y \text{~~and~~} x\curlyvee y= -N(x\cdot y), \text{ for } x, y \in A.
            \end{align*}
        Then $(A, \ast, \curlyvee )$ is an NS-commutative algebra. Moreover, the corresponding subadjacent commutative associative algebra is given by $(A, \cdot_N)$, where $x \cdot_N y:= N(x) \cdot y + x \cdot N(y) - N(x\cdot y)$, for $x, y \in A$. Additionally, we have $N(x \cdot_N y)= N(x)\cdot N(y),$ for $x, y \in A$.
        \end{proposition}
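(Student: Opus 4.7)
The plan is to verify directly the two defining axioms of an NS-commutative algebra for the triple $(A, \ast, \curlyvee)$, and then read off the subadjacent structure from the definitions. Commutativity of $\curlyvee$ is immediate: since $\cdot$ is commutative, $x \curlyvee y = -N(x \cdot y) = -N(y \cdot x) = y \curlyvee x$.

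The key preliminary observation is that $x \odot y = x \ast y + y \ast x + x \curlyvee y$ unpacks to $N(x) \cdot y + N(y) \cdot x - N(x \cdot y)$, which is exactly $x \cdot_N y$. By the Nijenhuis relation (\ref{ncom}), one immediately gets $N(x \odot y) = N(x \cdot_N y) = N(x) \cdot N(y)$. This identity is the workhorse for both axioms. Using it, the first axiom $x \ast (y \ast z) = (x \odot y) \ast z$ reduces to the associativity equality $N(x) \cdot (N(y) \cdot z) = (N(x) \cdot N(y)) \cdot z$, since both sides rewrite to $(N(x) \cdot N(y)) \cdot z$. The claim that $(A, \odot)$ coincides with $(A, \cdot_N)$ is then immediate, and the final formula $N(x \cdot_N y) = N(x) \cdot N(y)$ is precisely (\ref{ncom}) rephrased.

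For the second axiom
\[
x \ast (y \curlyvee z) + x \curlyvee (y \odot z) = y \ast (x \curlyvee z) + y \curlyvee (x \odot z),
\]
my strategy is to expand both sides and apply the Nijenhuis relation to the mixed products $N(x) \cdot N(y \cdot z)$ and $N(y) \cdot N(x \cdot z)$, while simultaneously expanding $y \odot z$ and $x \odot z$ via the formula above. After collecting terms, each side becomes a sum of four contributions of the form $-N(N(\alpha) \cdot \beta \cdot \gamma)$ (with $\{\alpha, \beta, \gamma\} = \{x, y, z\}$) together with a single $N^2(x \cdot y \cdot z)$ summand. Using the commutativity and associativity of $\cdot$, every such term is symmetric under the exchange $x \leftrightarrow y$, so the two sides match.

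The main obstacle is purely the bookkeeping in this second verification: one must apply (\ref{ncom}) consistently to unwind the products $N(-) \cdot N(-)$ and then confirm that all residual summands are $x \leftrightarrow y$-symmetric. No new idea beyond this careful expansion is needed.
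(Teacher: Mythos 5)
Your proof is correct. The paper itself gives no proof of Proposition \ref{nc} (it is stated as a routine fact, parallel to Proposition \ref{nl} which is quoted from the literature), and the direct verification you outline is exactly the expected argument: the identification $x \odot y = x \cdot_N y$ together with $N(x \cdot_N y) = N(x)\cdot N(y)$ (which is just the Nijenhuis identity re-read) makes the first axiom collapse to associativity of $\cdot$, and the second axiom follows by expanding and applying the Nijenhuis identity to $N(x)\cdot N(y\cdot z)$. One cosmetic quibble: after the cancellation of the $\pm N\bigl(x\cdot N(y\cdot z)\bigr)$ pair, each side consists of \emph{three} terms of the form $-N\bigl(N(\alpha)\cdot\beta\cdot\gamma\bigr)$ plus the single $N^2(x\cdot y\cdot z)$ term (not four plus one), and it is the collection of these terms --- rather than each term individually --- that is invariant under $x \leftrightarrow y$, since $-N(N(x)\cdot y\cdot z)$ and $-N(N(y)\cdot x\cdot z)$ swap with each other; neither point affects the validity of the argument.
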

        

    \begin{definition} \label{nslie}
        An {\bf NS-Lie algebra} is a triple $(A, \diamond, \smallblackdiamond )$ consisting of a vector space $A$ with two bilinear operations $\diamond, \smallblackdiamond : A \times A \rightarrow A$ in which the operation $\smallblackdiamond$ is skew-symmetric and the following identities hold:
            \begin{align}
                & x \diamond ( y \diamond z) - (x \diamond y) \diamond z - y \diamond( x \diamond z) + ( y \diamond x ) \diamond z = (x \smallblackdiamond y) \diamond z, \label{nsl1}\\
                 & x \smallblackdiamond \dcb{y}{z} + y \smallblackdiamond \dcb{z}{x} + z \smallblackdiamond \dcb{x}{y} + x \diamond (y \smallblackdiamond z) + y \diamond (z \smallblackdiamond x) + z \diamond (x \smallblackdiamond y) = 0,  \label{nsl2}
            \end{align}
           for $x, y, z \in A$. Here we used the notation $\dcb{x}{y}= x \diamond y - y \diamond x + x \smallblackdiamond y$.
    \end{definition}

        Any NS-Lie algebra $(A, \diamond, \smallblackdiamond)$ with trivial $\smallblackdiamond$ is simply a pre-Lie algebra. On the other hand, an NS-Lie algebra $(A, \diamond, \smallblackdiamond)$ with trivial $\diamond$ is nothing but a Lie algebra. Thus, NS-Lie algebras are a generalization of both pre-Lie algebras and Lie algebras.

    \begin{remark}  \label{replie}
        Let $(A, \diamond, \smallblackdiamond)$ be an NS-Lie algebra. Then it has been observed in \cite{das} that  
        $(A, \dcb{~}{~})$ is a Lie algebra. This is called the subadjacent Lie algebra of $(A, \diamond, \smallblackdiamond)$ and it is denoted by $A^c$.
        Moreover, the map $\rho : A^c \rightarrow \mathrm{End}(A), x \mapsto \rho_x$ given by
            \begin{equation} \label{rho}
                \rho_x (y)= x \diamond y,  \text{ ~for~} x,y \in A,
            \end{equation}
        defines a representation of the subadjacent Lie algebra $A^c$ on the vector space $A$.
    \end{remark}

    Let $(A, \{ ~, ~\})$ be a Lie algebra. A linear map $N:A \rightarrow A$ is called a {\bf Nijenhuis operator} on the Lie algebra $(A,\{~,~\})$ if it satisfies
    \begin{align} \label{nlie}
        \{N(x) , N(y)\} = N( \{N(x),y\} + \{x,N(y)\} - N\{x, y\} ), \text{ for all } x, y \in A.
    \end{align}
     

     The following result has been proved in \cite{das}.
     
    \begin{proposition} \label{nl}
       Let $(A, \{~,~\})$ be a Lie algebra and $N:A\rightarrow A$ be a Nijenhuis operator on it. We define bilinear operations $\diamond , \smallblackdiamond : A \times A \rightarrow A$ by
        \begin{align*}
            x \diamond y = \{N(x),y\} \text{~~and~~} x\smallblackdiamond y= -N\{x,y\}, \text{ for } x, y \in A.
        \end{align*}
        Then $(A, \diamond, \smallblackdiamond)$ is an NS-Lie algebra. Moreover, the corresponding subadjacent Lie algebra is given by $(A, \{ ~, ~\}_N)$, where $\{ x, y \}_N := \{N(x),y\} + \{x, N(y)\} - N\{x, y\}$, for $x, y \in A$. Additionally, we have $N (\{x,y\}_N) =\{N(x) , N(y)\}$, for $x, y \in A$.
    \end{proposition}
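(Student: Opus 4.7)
The last two assertions of the proposition are immediate consequences of the Nijenhuis identity \eqref{nlie}: indeed, $\{x,y\}_N := \{N(x),y\}+\{x,N(y)\}-N\{x,y\}$ is exactly the expression in the inner parentheses of \eqref{nlie}, so $N(\{x,y\}_N)=\{N(x),N(y)\}$ holds by definition. Its bilinearity and skew-symmetry (the latter from skew-symmetry of $\{~,~\}$) are visible directly, and $\dcb{x}{y}=\{N(x),y\}-\{N(y),x\}-N\{x,y\}=\{x,y\}_N$, identifying the two brackets. Likewise, the skew-symmetry of $\smallblackdiamond$ follows from that of $\{~,~\}$.

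The real work is to verify the two NS-Lie axioms \eqref{nsl1} and \eqref{nsl2}. For \eqref{nsl1}, I would expand the left-hand side using the definitions of $\diamond$ and $\smallblackdiamond$ to obtain
\[
\{N(x),\{N(y),z\}\}-\{N\{N(x),y\},z\}-\{N(y),\{N(x),z\}\}+\{N\{N(y),x\},z\}.
\]
Collapsing the first and third terms by the Jacobi identity for $\{~,~\}$ produces $\{\{N(x),N(y)\},z\}$, and then collecting everything inside a single outer bracket gives
\[
\bigl\{\{N(x),N(y)\}-N\{N(x),y\}-N\{x,N(y)\},z\bigr\}.
\]
The Nijenhuis identity \eqref{nlie} rewrites the inner expression as $-N^{2}\{x,y\}=N(x\smallblackdiamond y)$, and this is precisely $(x\smallblackdiamond y)\diamond z$, the right-hand side of \eqref{nsl1}.

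For \eqref{nsl2}, I would first rewrite the sum using $\dcb{y}{z}=\{y,z\}_N$, yielding
\[
-\!\!\sum_{\mathrm{cyc}(x,y,z)}\!\! N\{x,\{y,z\}_N\}-\!\!\sum_{\mathrm{cyc}(x,y,z)}\!\! \{N(x),N\{y,z\}\}.
\]
For the second sum I would apply the Nijenhuis identity \eqref{nlie} with the pair $(x,\{y,z\})$, using that the Jacobi identity kills $N^2\sum_{\mathrm{cyc}}\{x,\{y,z\}\}$. After pulling out a common factor of $-N$, the problem reduces to showing that
\[
\sum_{\mathrm{cyc}}\{x,\{N(y),z\}\}+\sum_{\mathrm{cyc}}\{x,\{y,N(z)\}\}+\sum_{\mathrm{cyc}}\{N(x),\{y,z\}\}=0,
\]
where the $\pm\sum_{\mathrm{cyc}}\{x,N\{y,z\}\}$ contributions cancel. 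I would then split the nine resulting terms into three groups according to which variable carries $N$; each group consists of exactly the three Jacobi terms for the triple $(N(\cdot),\cdot,\cdot)$ and therefore vanishes.

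The only mild subtlety is the bookkeeping in the final cyclic-Jacobi argument, where care is needed to track how skew-symmetry of $\{~,~\}$ redistributes $N$ across the bracket arguments; no computation in the proof is genuinely hard, and the whole argument is just the Nijenhuis identity combined with two applications of Jacobi. The representation statement of Remark \ref{replie} is then immediate from $\rho_x(y)=x\diamond y=\{N(x),y\}$ and the fact that $N\circ\mathrm{ad}$ respects the deformed bracket $\{~,~\}_N$.
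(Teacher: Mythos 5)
Your argument is correct and complete. Note that the paper itself does not prove Proposition \ref{nl} --- it is quoted with the remark that it ``has been proved in \cite{das}'' --- so there is no in-paper proof to compare against; your verification is the standard direct one that the cited source also carries out. The key steps all check: for (\ref{nsl1}), Jacobi collapses $x\diamond(y\diamond z)-y\diamond(x\diamond z)$ to $\{\{N(x),N(y)\},z\}$, and after folding $\{N\{N(y),x\},z\}=-\{N\{x,N(y)\},z\}$ into the outer bracket the Nijenhuis identity leaves exactly $\{-N^2\{x,y\},z\}=(x\smallblackdiamond y)\diamond z$; for (\ref{nsl2}), the cancellation of the $\pm\sum_{\mathrm{cyc}}\{x,N\{y,z\}\}$ terms after applying \eqref{nlie} to the pairs $(x,\{y,z\})$ is genuine, and the remaining nine terms do split into three cyclic Jacobi sums for the triples $(N(x),y,z)$, $(N(y),z,x)$, $(N(z),x,y)$, each of which vanishes. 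The identifications $\dcb{x}{y}=\{x,y\}_N$ and $N(\{x,y\}_N)=\{N(x),N(y)\}$ are, as you say, immediate from skew-symmetry and the defining identity of $N$. The only cosmetic point is that your closing sentence about the representation of Remark \ref{replie} addresses something not asserted in the proposition and could simply be dropped.
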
    
    
\section{NS-Poisson algebras} \label{section3}
In this section, we introduce NS-Poisson algebras as a generalization of both pre-Poisson algebras and Poisson algebras. We show that an NS-Poisson algebra naturally induces a Poisson algebra structure. Further, NS-Poisson algebras can be seen as the underlying structure of Nijenhuis operators and twisted Rota-Baxter operators on Poisson algebras. In fact, we observe that any NS-Poisson algebra arises from a twisted Rota-Baxter operator. We end this section by considering NS-Gerstenhaber algebras as the graded version of NS-Poisson algebras.

    \begin{definition} \label{nspois}
        An \textbf{NS-Poisson algebra} is a quintuple $(A, \ast, \curlyvee, \diamond, \smallblackdiamond)$ consisting of a vector space $A$ with four bilinear operations $\ast$, $\curlyvee$, $\diamond$, $\smallblackdiamond$ $:A \times A \rightarrow A$ such that
        $( A, \ast, \curlyvee)$ is an NS-commutative algebra, $( A, \diamond, \smallblackdiamond )$ is an NS-Lie algebra and the following compatibilities hold:
            \begin{align*}
                \dcb{x}{y} \ast z  =&~ x \diamond ( y \ast z) - y \ast (x \diamond z ), \tag{NSP1} \label{nsp1}\\
                ( x \odot y) \diamond z =&~ x \ast (y \diamond z) + y \ast (x \diamond z ),  \tag{NSP2} \label{nsp2}\\
                 x \smallblackdiamond ( y \odot z ) + x \diamond ( y \curlyvee z) =&~ \dcb{x}{y} \curlyvee z + z \ast (x \smallblackdiamond y ) + y \curlyvee \dcb{x}{z} + y \ast (x \smallblackdiamond z), \tag{NSP3} \label{nsp3}
            \end{align*}
       for $x, y, z \in A$. Here $x \odot y= x \ast y + y \ast x + x \curlyvee y$ and $ \dcb{x}{y} = x \diamond y - y \diamond x + x \smallblackdiamond y $.
    \end{definition}

    \begin{remark}
        Let $(A, \ast, \curlyvee, \diamond, \smallblackdiamond)$ be an NS-Poisson algebra in which $\curlyvee$ and $\smallblackdiamond$ are trivial. Then $(A, \ast, \diamond)$ turns out to be a pre-Poisson algebra.  On the other hand, if $(A, \ast, \curlyvee, \diamond, \smallblackdiamond)$ is an NS-Poisson algebra in which $\ast$ and $\diamond$ are trivial then $(A, \curlyvee, \smallblackdiamond = \{ ~, ~ \})$ becomes a Poisson algebra. Thus, NS-Poisson algebras can be regarded as a common generalization of both pre-Poisson algebras and Poisson algebras.
    \end{remark}

    \begin{thm}\label{prop-subadj-p}
        Let $(A, \ast , \curlyvee , \diamond , \smallblackdiamond)$ be an NS-Poisson algebra.
        \begin{itemize}
            \item[(i)] Then $(A, \odot , \dcb{~}{~})$ is a Poisson algebra, where $\odot$ and $\dcb{~}{~}$ are defined in Definition \ref{nspois}. (This is called the sub-adjacent Poisson algebra of $(A, \ast,\curlyvee,\diamond, \smallblackdiamond)$ and is denoted by $A^c$.)
            \item[(ii)] Moreover, the triple $(A,\mu , \rho)$ is a representation of the sub-adjacent Poisson algebra $A^c$, where $\mu$ and $\rho$ are given by (\ref{mu}) and (\ref{rho}), respectively.
        \end{itemize}          
    \end{thm}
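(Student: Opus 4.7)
The plan is to handle parts (i) and (ii) separately, exploiting the fact that the purely associative and purely Lie parts of the claim are already established by the remarks following the NS-commutative and NS-Lie definitions, so only the Poisson Leibniz rule and the mixed representation axioms remain to verify.

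For part (i), the commutative associativity of $(A, \odot)$ is supplied by the NS-commutative subalgebra $(A, \ast, \curlyvee)$ (cf.\ Remark \ref{repcom}), and the Jacobi identity for $(A, \dcb{~}{~})$ by the NS-Lie subalgebra $(A, \diamond, \smallblackdiamond)$ (cf.\ Remark \ref{replie}). Thus the only outstanding identity is the Poisson Leibniz rule
\begin{align*}
    \dcb{x}{y \odot z} \,=\, \dcb{x}{y} \odot z + y \odot \dcb{x}{z}.
\end{align*}
I would expand each side using $y \odot z = y \ast z + z \ast y + y \curlyvee z$ and $\dcb{x}{w} = x \diamond w - w \diamond x + x \smallblackdiamond w$. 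On the left, axiom (NSP2) applied with first two arguments $y, z$ converts the piece $-(y \ast z + z \ast y + y \curlyvee z) \diamond x$ into $-y \ast (z \diamond x) - z \ast (y \diamond x)$; axiom (NSP3) rewrites $x \diamond (y \curlyvee z) + x \smallblackdiamond (y \odot z)$ as $\dcb{x}{y} \curlyvee z + y \curlyvee \dcb{x}{z} + z \ast (x \smallblackdiamond y) + y \ast (x \smallblackdiamond z)$; and on the right, axiom (NSP1) in its two instances $\dcb{x}{y} \ast z = x \diamond (y \ast z) - y \ast (x \diamond z)$ and $\dcb{x}{z} \ast y = x \diamond (z \ast y) - z \ast (x \diamond y)$ generates the matching $x \diamond (y \ast z)$ and $x \diamond (z \ast y)$ summands. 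After these substitutions, the two sides coincide term by term.

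Part (ii) is then essentially immediate. Remark \ref{repcom} shows that $\mu$ is a representation of the commutative associative algebra $(A, \odot)$, and Remark \ref{replie} shows that $\rho$ is a representation of the Lie algebra $(A, \dcb{~}{~})$. Evaluating the two mixed compatibilities (\ref{rep-pois1}) and (\ref{rep-pois2}) on an arbitrary $z \in A$ yields
\begin{align*}
    x \diamond (y \ast z) - y \ast (x \diamond z) - \dcb{x}{y} \ast z = 0 \quad \text{and} \quad x \ast (y \diamond z) + y \ast (x \diamond z) - (x \odot y) \diamond z = 0,
\end{align*}
which are precisely axioms (NSP1) and (NSP2) of Definition \ref{nspois}.

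The main obstacle is purely organizational: the Leibniz rule in part (i) unfolds into a large collection of summands, and the essential observation is that (NSP3) is the only axiom that pairs $\curlyvee$ with $\smallblackdiamond$, so it must be deployed to absorb all mixed terms involving these two operations in a single stroke, while (NSP1) and (NSP2) take care of the remaining cross terms built from $\ast$ and $\diamond$. Once this pairing strategy is committed to, the verification reduces to careful bookkeeping.
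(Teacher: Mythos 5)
Your proposal is correct and follows essentially the same route as the paper: parts already guaranteed by the NS-commutative and NS-Lie subadjacent constructions are quoted, the Leibniz rule is verified by pairing the cross terms so that (NSP1) and (NSP2) absorb the $\ast$/$\diamond$ terms while (NSP3) handles the $\curlyvee$/$\smallblackdiamond$ terms in one stroke, and part (ii) is read off by evaluating the two mixed representation identities on $z$, which reproduces (NSP1) and (NSP2) verbatim. The only cosmetic difference is that the paper starts from $\dcb{x}{y}\odot z + \dcb{x}{z}\odot y$ and works toward $\dcb{x}{y\odot z}$, whereas you expand both sides and match, which is the same computation.
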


    \begin{proof}
      (i)  Since $(A, \ast, \curlyvee)$ is an NS-commutative algebra, it follows that $(A, \odot)$ is a commutative associative algebra. On the other hand,
         $(A, \diamond , \smallblackdiamond)$ is an NS-Lie algebra implies that $(A, \dcb{~}{~})$ is a Lie algebra. To show that $(A, \odot, \dcb{~}{~})$ is a Poisson algebra, we need to verify the Leibniz rule. For any $x, y, z \in A$, we have
        \begin{align*}
            &\dcb{x}{y}\odot z + \dcb{x}{z}\odot y\\
            &=\dcb{x}{y}\ast z + z \ast \dcb{x}{y} + \dcb{x}{y} \curlyvee z + \dcb{x}{z} \ast y + y \ast \dcb{x}{z} + \dcb{x}{z} \curlyvee y\\
            &=\dcb{x}{y} \ast z + z \ast (x \diamond y) - z \ast (y \diamond x) + z \ast (x \smallblackdiamond y) + \dcb{x}{y} \curlyvee z + \dcb{x}{z} \ast y \\
            &\quad + y \ast (x \diamond z) - y \ast (z \diamond x) + y \ast (x \smallblackdiamond z) + \dcb{x}{z} \curlyvee y\\
            &=\big( \dcb{x}{y} \ast z + y \ast (x \diamond z)\big) + \big(z \ast (x \diamond y) + \dcb{x}{z} \ast y\big) - \big(z\ast (y \diamond x) + y \ast (z \diamond x)\big)\\
            & \quad + \big(z \ast (x \smallblackdiamond y) +\dcb{x}{y} \curlyvee z+ y \ast (x \smallblackdiamond z) + \dcb{x}{z} \curlyvee y \big )\\
            &=x \diamond (y \ast z) + x \diamond (z \ast y)- (y \odot z) \diamond x + x \smallblackdiamond (y \odot z) + x \diamond (y \curlyvee z)\\
            &=x \diamond (y \odot z) - (y \odot z) \diamond x + x \smallblackdiamond (y \odot z) \\
            &=  \dcb{x}{y \odot z}
        \end{align*}
        which verifies the Leibniz rule. Hence $(A, \odot, \dcb{~}{~})$ is a Poisson algebra.

        (ii) We have already seen in Remark \ref{repcom} that $(A, \mu)$ defines a representation of $(A, \odot)$. Similarly, in Remark \ref{replie} we have seen that $(A, \rho)$ defines a representation of the Lie algebra $(A, \dcb{~}{~})$. Thus, to show that $(A, \mu, \rho)$ is a representation of the sub-adjacent Poisson algebra $A^c = (A, \ast , \curlyvee , \diamond , \smallblackdiamond)$, we need to verify the identities (\ref{rep-pois1}) and (\ref{rep-pois2}).

        Since $(A, \ast, \curlyvee, \diamond, \smallblackdiamond)$ is an NS-Poisson algebra, it follows from (\ref{nsp1}) that
        \begin{align*}
           \mu_{\dcb{x}{y}} (z) = (\rho_x \circ \mu_y -\mu_y \circ \rho_x )(z), \text{ for } x, y, z \in A.
        \end{align*}
Hence the identity (\ref{rep-pois1}) follows. On the other hand, it follows from (\ref{nsp2}) that 
\begin{align*}
    \rho_{x\odot y}(z) = (\mu_x \circ \rho_y + \mu_y \circ \rho_x )(z), \text{ for } x, y, z \in A.
\end{align*}
This verifies the identity (\ref{rep-pois2}). Hence complete the proof.
  \end{proof}

         The above construction of the sub-adjacent Poisson algebra of an NS-Poisson algebra generalizes the similar construction for pre-Poisson algebras.
         
         In the following, we consider Nijenhuis operators on a Poisson algebra and show that a Nijenhuis operator induces an NS-Poisson algebra structure.
 Let $(A, \cdot, \{ ~, ~ \})$ be a Poisson algebra. Recall that a {\bf Nijenhuis operator} on the Poisson algebra $(A, \cdot, \{~,~\})$ is a linear map $N:A \rightarrow A$ that is a Nijenhuis operator on both the commutative associative algebra $(A, \cdot)$ and the Lie algebra $(A, \{~,~\})$. Thus, $N$ is a Nijenhuis operator if both the identities (\ref{ncom}) and (\ref{nlie}) hold.
    
    \begin{proposition}\label{prop-nij-nsp}
        Let $ ( A, \cdot, \{ ~, ~\})$ be a Poisson algebra and $N: A \rightarrow A$ be a Nijenhuis operator on it. Define four bilinear operations $\ast , \curlyvee , \diamond , \smallblackdiamond : A \times A \rightarrow A$ by 
        $$ x \ast y = N(x) \cdot y , \quad  x \curlyvee y = - N(x \cdot y), \quad x \diamond y = \{N(x), y\} \quad \text{ and } \quad  x \smallblackdiamond y = - N\{x,y\}, \text{ for } x, y \in A.$$
        Then $(A, \ast , \curlyvee , \diamond , \smallblackdiamond)$ is an NS-Poisson algebra.
    \end{proposition}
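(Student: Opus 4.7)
The plan is to establish three things: $(A, \ast, \curlyvee)$ is an NS-commutative algebra, $(A, \diamond, \smallblackdiamond)$ is an NS-Lie algebra, and the three compatibilities (\ref{nsp1})--(\ref{nsp3}) hold. The first two structures come for free: since a Nijenhuis operator on a Poisson algebra is by definition a Nijenhuis operator on both the underlying commutative associative algebra $(A, \cdot)$ and the underlying Lie algebra $(A, \{~,~\})$, Propositions \ref{nc} and \ref{nl} immediately apply. These propositions also produce the two key identities
\begin{align*}
N(x \odot y) = N(x) \cdot N(y) \qquad \text{and} \qquad N(\dcb{x}{y}) = \{N(x), N(y)\},
\end{align*}
which hold because $x \odot y = x \cdot_N y$ and $\dcb{x}{y} = \{x, y\}_N$ under the present definitions. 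These two identities will organize all remaining computations.

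For (\ref{nsp1}), the left-hand side equals $N(\dcb{x}{y}) \cdot z = \{N(x), N(y)\} \cdot z$, while the right-hand side unpacks to $\{N(x), N(y) \cdot z\} - N(y) \cdot \{N(x), z\}$. A single application of the Leibniz rule (\ref{leib-rule}) to $\{N(x), N(y) \cdot z\}$ then yields the match. The compatibility (\ref{nsp2}) is handled in exactly the same spirit: the left-hand side reduces to $\{N(x) \cdot N(y), z\}$, and one application of the Leibniz rule (together with commutativity of $\cdot$) produces the two summands on the right.

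The principal obstacle is (\ref{nsp3}), which mixes all four operations and requires both Nijenhuis conditions (\ref{ncom}) and (\ref{nlie}) together with the Leibniz rule. The strategy is to expand both sides by definition and then (i) apply (\ref{nlie}) to the lone term $\{N(x), N(y \cdot z)\}$ arising on the left to trade it for $N$-values of brackets, and (ii) apply (\ref{ncom}) to the two products $N(z) \cdot N\{x, y\}$ and $N(y) \cdot N\{x, z\}$ arising on the right for the same purpose. After these substitutions, every remaining bracket on either side has the form $\{x, N(y) \cdot z\}$, $\{N(x), y \cdot z\}$ and the like, and can be opened up by the Leibniz rule. The expected outcome is that several pairs of terms cancel using commutativity of $\cdot$ (for instance $N\{x, y\} \cdot z = z \cdot N\{x, y\}$), after which both sides reduce to the same sum of six $N$-terms and two $N^2$-terms. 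The only real difficulty is the bookkeeping; no new conceptual input beyond the Leibniz rule and the two Nijenhuis identities is needed.
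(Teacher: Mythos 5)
Your proposal is correct and follows essentially the same route as the paper's proof: Propositions \ref{nc} and \ref{nl} give the NS-commutative and NS-Lie structures, the identities (\ref{nsp1}) and (\ref{nsp2}) follow from a single application of the Leibniz rule together with $N(\dcb{x}{y}) = \{N(x), N(y)\}$ and $N(x \odot y) = N(x) \cdot N(y)$, and for (\ref{nsp3}) the paper performs exactly the substitutions you describe --- (\ref{nlie}) applied to $\{N(x), N(y \cdot z)\}$ and (\ref{ncom}) applied to $N(z) \cdot N\{x,y\}$ and $N(y) \cdot N\{x,z\}$ --- after which the Leibniz rule finishes the cancellation. The only difference is presentational: the paper collects the entire expression under a single $N(\cdot)$ and shows its argument vanishes, rather than matching two sides term by term, but the bookkeeping is identical.
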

    
    \begin{proof}
        Since $N: A \rightarrow A$ is a Nijenhuis operator on the commutative associative algebra $(A, \cdot)$, it follows from Proposition \ref{nc} that $(A, \ast, \curlyvee)$ is an NS-commutative algebra. On the other hand, since $N$ is also a Nijenhuis operator on the Lie algebra $(A, \{~,~\})$, it follows from Proposition \ref{nl} that $(A, \diamond, \smallblackdiamond)$ is an NS-Lie algebra. Thus, to show that $(A, \ast, \curlyvee, \diamond,\smallblackdiamond)$ is an NS-Poisson algebra, we need to verify the identities (\ref{nsp1}), (\ref{nsp2}) and (\ref{nsp3}). For any $x, y, z \in A$, we first observe that
        \begin{align*}
            x \diamond (y \ast z) - y \ast (x \diamond z)
            &= \{ N(x), N(y)\cdot z \}- N(y) \cdot \{ N(x), z \} \\
            &= \{ N(x) , N(y) \} \cdot z\\
            &= N ( \{ x, y \}_N) \cdot z\\
            &= \dcb{x}{y} \ast z.
        \end{align*}
        This verifies the identity (\ref{nsp1}). Similarly, we have
        \begin{align*}
            x \ast (y \diamond z) + y \ast (x \diamond z)
            &= N(x) \cdot \{ N(y), z \} + N(y) \cdot \{N(x), z\}\\
            &= \{ N(x) \cdot N(y) , z\}\\
            &= \{N(x \cdot_N y),z\}\\
            &= (x \odot y) \diamond z,
        \end{align*}
        which verifies (\ref{nsp2}). Finally, we have
        \begin{align*}
            & x\smallblackdiamond (y \odot z) + x\diamond (y\curlyvee z)-\dcb{x}{y}\curlyvee z-z\ast (x \smallblackdiamond y)-y\curlyvee \dcb{x}{z}-y\ast (x\smallblackdiamond z)\\
            &= -N\{x,y\cdot_N z\}-\{N(x),N(y\cdot z)\}+N(\{x,y\}_N\cdot z)+N(z)\cdot N\{x,y\}+N(y\cdot \{x,z\}_N)\\
            & \quad + N(y)\cdot N\{x,z\}\\
            &= N \big( -\{x,y\cdot_N z\}-\{x,N(y\cdot z) \} -\{N(x),y\cdot z\}+N\{x, y \cdot z\} +\{x,y\}_N\cdot z+ z\cdot N\{x,y\}\\
            & \quad + N(z) \cdot \{x,y\}- N(z\cdot \{x,y\})+y\cdot \{x,z\}_N + y\cdot N\{x,z\} + N(y)\cdot \{x,z\}- N(y\cdot \{x,z\} )  \big)\\
            &= N \Big(  \big(-\{x,y\cdot_N z\}-\{x,N(y\cdot z)\}\big)-\{N(x),y\cdot z\}+N\big({\{x, y\cdot z\}}-{z\cdot \{x,y \}}- {y\cdot \{x,z\}}\big)\\
            & \quad +\big(\{x,y\}_N \cdot z + z\cdot N\{x,y\}\big) + N(z)\cdot \{x,y\} + \big(y\cdot \{x,z\}_N + y\cdot N\{x,z\}\big) +N(y)\cdot \{x,z\} \Big)\\
            &= N \big( -\{x,N(y)\cdot z\}-\{x, y \cdot N(z)\}- \{N(x), y\cdot z\}+ \{N(x),y\}\cdot z+ \{x,N(y)\}\cdot z+ N(z)\cdot\{x,y\}\\
            & \quad +y\cdot \{N(x),z\}+ y\cdot \{x,N(z)\}+N(y)\cdot \{x,z\} \big)\\
            &=0.
        \end{align*}
        Hence the identity (\ref{nsp3}) also holds. This completes the proof.
    \end{proof}

    Let $(A, \cdot, \{ ~, ~ \})$ be a Poisson algebra and $N : A \rightarrow A$ be a Nijenhuis operator on it. Then it follows from the above proposition that $(A, \cdot_N, \{ ~ , ~ \}_N)$ is a Poisson algebra, where
    \begin{align*}
        x \cdot_N y = N(x) \cdot y + x \cdot N(y) - N (x \cdot y) ~~~ \text{ and } ~~~ \{x, y \}_N = \{ N(x) , y \} + \{ x, N(y) \} - N \{ x, y \},
    \end{align*}
    for $x, y \in A$. The Poisson algebra $(A, \cdot_N, \{ ~ , ~ \}_N)$ is said to be the {\em Nijenhuis deformation} of $(A, \cdot, \{ ~, ~ \})$ by the Nijenhuis operator $N$.

    It is well-known \cite{kosmann-magri} that a Nijenhuis operator on a commutative associative algebra (resp. Lie algebra) gives rise to a hierarchy of new commutative associative algebra structures (resp. Lie algebra structures) on the underlying vector space. In the following, we generalize this result in the context of Poisson algebras using the help of NS-Poisson algebras. We start with the following notion.

    \begin{definition}
        Let $(A, \ast, \curlyvee, \diamond, \smallblackdiamond)$ and $(A, \ast', \curlyvee', \diamond', \smallblackdiamond')$ be two NS-Poisson algebra structures on a vector space $A$. They are said to be {\bf compatible} if $ ( A, \ast +  \ast', \curlyvee + \curlyvee', \diamond + \diamond', \smallblackdiamond + \smallblackdiamond')$ is also an NS-Poisson algebra.
    \end{definition}

    With the above definition, we have the following simple result.

    \begin{proposition}\label{prop-comp-ns}
        Let $(A, \ast, \curlyvee, \diamond, \smallblackdiamond)$ and $(A, \ast', \curlyvee', \diamond', \smallblackdiamond')$ be compatible NS-Poisson algebra structures on $A$. Then the corresponding sub-adjacent Poisson algebras $(A, \odot, \dcb{~}{~} )$ and $(A, \odot', \dcb{~}{~}')$ are also compatible in the sense that $(A, \odot + \odot' , \dcb{~}{~} + \dcb{~}{~}')$ is a Poisson algebra.
    \end{proposition}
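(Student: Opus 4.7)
The plan is to deduce this proposition immediately from Theorem~\ref{prop-subadj-p} by exploiting the fact that the sub-adjacent construction is linear in each of the four underlying operations. By the compatibility hypothesis, the quintuple $(A, \ast + \ast', \curlyvee + \curlyvee', \diamond + \diamond', \smallblackdiamond + \smallblackdiamond')$ is itself an NS-Poisson algebra, so Theorem~\ref{prop-subadj-p}(i) produces a Poisson algebra structure on $A$ whose product and bracket are the sub-adjacent operations associated to the summed quintuple.

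It then suffices to identify this sub-adjacent Poisson structure with $(A, \odot + \odot', \dcb{~}{~} + \dcb{~}{~}')$. For the commutative associative product this follows from the routine expansion
\begin{align*}
x\,(\ast + \ast')\,y + y\,(\ast + \ast')\,x + x\,(\curlyvee + \curlyvee')\,y &= (x \ast y + y \ast x + x \curlyvee y) + (x \ast' y + y \ast' x + x \curlyvee' y) \\
&= x \odot y + x \odot' y,
\end{align*}
and for the Lie bracket from the analogous identity
\begin{align*}
x\,(\diamond + \diamond')\,y - y\,(\diamond + \diamond')\,x + x\,(\smallblackdiamond + \smallblackdiamond')\,y = \dcb{x}{y} + \dcb{x}{y}'.
\end{align*}
Hence the sub-adjacent Poisson algebra of the summed NS-Poisson structure coincides on the nose with the componentwise sum of the two individual sub-adjacent Poisson algebras.

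There is no genuine obstacle here: the proof reduces to the observation that passing to the sub-adjacent intertwines addition of NS-Poisson structures with addition of Poisson structures, so the desired Poisson axioms (in particular the Leibniz rule) for $(A, \odot + \odot', \dcb{~}{~} + \dcb{~}{~}')$ are automatic from Theorem~\ref{prop-subadj-p} applied to the compatible NS-Poisson algebra. I would simply present the above two-line verification and invoke the theorem, without any further case analysis.
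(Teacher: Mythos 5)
Your proposal is correct and is exactly the intended argument: the paper states this as a ``simple result'' with no written proof, and the evident justification is precisely yours --- apply Theorem \ref{prop-subadj-p} to the summed NS-Poisson structure and note that $\odot$ and $\dcb{~}{~}$ depend linearly on the four underlying operations, so the sub-adjacent structure of the sum is the sum of the sub-adjacent structures. Nothing further is needed.
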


    \begin{proposition}\label{prop-hier}
        Let $(A, \cdot. \{ ~, ~ \})$ be a Poisson algebra and $N: A \rightarrow A$ be a Nijenhuis operator on it. Then we have the following.
        \begin{itemize}
            \item[(i)] For any $k \geq 0$, the map $N^k : A \rightarrow A$ is also a Nijenhuis operator on the Poisson algebra $A$. Hence one obtains the induced NS-Poisson algebra $(A, \ast_{N^k}, \curlyvee_{N^k}, \diamond_{N^k}, \smallblackdiamond_{N^k})$ and the corresponding sub-adjacent Poisson algebra $(A, \cdot_{N^k}, \{ ~, ~ \}_{N^k})$.
            \item[(ii)] For any $k, l \geq 0$, the NS-Poisson algebras $(A, \ast_{N^k}, \curlyvee_{N^k}, \diamond_{N^k}, \smallblackdiamond_{N^k})$ and $(A, \ast_{N^l}, \curlyvee_{N^l}, \diamond_{N^l}, \smallblackdiamond_{N^l})$ are compatible. As a result, the corresponding sub-adjacent Poisson algebras $(A, \cdot_{N^k}, \{ ~, ~ \}_{N^k})$ and $(A, \cdot_{N^l}, \{ ~, ~ \}_{N^l})$ are compatible.
        \end{itemize}
    \end{proposition}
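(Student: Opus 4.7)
My plan is to bootstrap everything from the single-operator case of Proposition \ref{prop-nij-nsp} by exploiting the fact that the NS-Poisson operations depend linearly on the underlying Nijenhuis operator.

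For part (i), I would first recall the classical fact that powers of a Nijenhuis operator are again Nijenhuis operators, separately on commutative associative algebras and on Lie algebras. This is proved by induction on $k$ using the identities (\ref{ncom}) and (\ref{nlie}) together with a telescoping argument: one applies the Nijenhuis relation to the pair $(N^k(x), y)$ and then uses the inductive hypothesis to rewrite the terms involving $N$ and $N^{k-1}$, eventually collapsing everything into a single application of $N^k$ on the outside. Since by definition a Nijenhuis operator on the Poisson algebra $(A, \cdot, \{~,~\})$ is simultaneously Nijenhuis on $(A, \cdot)$ and on $(A, \{~,~\})$, this yields that $N^k$ is Nijenhuis on $A$. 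The induced NS-Poisson algebra $(A, \ast_{N^k}, \curlyvee_{N^k}, \diamond_{N^k}, \smallblackdiamond_{N^k})$ and its sub-adjacent Poisson algebra $(A, \cdot_{N^k}, \{~,~\}_{N^k})$ are then immediate from Proposition \ref{prop-nij-nsp} and the paragraph following it.

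For part (ii), the key observation is that the four operations in Proposition \ref{prop-nij-nsp} depend linearly on the Nijenhuis operator. In particular,
\begin{align*}
    x \ast_{N^k} y + x \ast_{N^l} y = (N^k + N^l)(x) \cdot y, \qquad x \curlyvee_{N^k} y + x \curlyvee_{N^l} y = -(N^k + N^l)(x \cdot y),
\end{align*}
and analogous identities hold for $\diamond$ and $\smallblackdiamond$. Hence the componentwise sum of the NS-Poisson structures induced by $N^k$ and $N^l$ coincides with the NS-Poisson structure that Proposition \ref{prop-nij-nsp} would associate to the linear map $N^k + N^l$. It therefore suffices to prove that $N^k + N^l$ is itself a Nijenhuis operator on the Poisson algebra $A$, after which Proposition \ref{prop-nij-nsp} yields an NS-Poisson structure equal to the sum, establishing compatibility. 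The corresponding compatibility of the sub-adjacent Poisson algebras $(A, \cdot_{N^k}, \{~,~\}_{N^k})$ and $(A, \cdot_{N^l}, \{~,~\}_{N^l})$ then follows at once from Proposition \ref{prop-comp-ns}.

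The main technical obstacle is thus the classical compatibility statement for powers: if $N$ is a Nijenhuis operator on a commutative associative (resp.\ Lie) algebra, then $N^k + N^l$ is again a Nijenhuis operator. I would establish this by first proving a polarized variant of (\ref{ncom}), namely a pairwise identity of the form $N^k(x) \cdot N^l(y) = N^k\bigl(x \cdot N^l(y)\bigr) + N^l\bigl(N^k(x) \cdot y\bigr) - N^{k+l}(x \cdot y)$ for all $k, l \geq 0$, obtained by iterating (\ref{ncom}) and using part (i). Expanding $(N^k + N^l)(x) \cdot (N^k + N^l)(y)$ and applying this polarized identity together with the Nijenhuis relations already verified for $N^k$ and $N^l$ individually, one checks (\ref{ncom}) for $N^k + N^l$. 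The Lie-algebra case is entirely analogous using (\ref{nlie}), and combining the two gives the required Nijenhuis property of $N^k + N^l$ on the Poisson algebra, completing the proof.
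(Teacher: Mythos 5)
Your proposal is correct and follows essentially the same route as the paper: reduce everything to the facts that $N^k$ and $N^k+N^l$ are Nijenhuis operators on both underlying structures, observe that by linearity the componentwise sum of the two NS-Poisson structures is exactly the one induced by $N^k+N^l$ via Proposition \ref{prop-nij-nsp}, and conclude the sub-adjacent statement from Proposition \ref{prop-comp-ns}. The only difference is that the paper simply cites Kosmann-Schwarzbach--Magri for the two classical facts about powers and their compatibility, whereas you sketch their proofs (your polarized identity $N^k(x)\cdot N^l(y)=N^k\bigl(x\cdot N^l(y)\bigr)+N^l\bigl(N^k(x)\cdot y\bigr)-N^{k+l}(x\cdot y)$ is indeed the standard form of that compatibility and does yield the Nijenhuis property of $N^k+N^l$ by direct expansion).
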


    \begin{proof}
        (i) Since $N : A \rightarrow A$ is a Nijenhuis operator on the commutative associative algebra $(A, \cdot)$ \big(resp. on the Lie algebra $(A, \{ ~, ~ \})$\big), it follows that $N^k$ is also a Nijenhuis operator on $(A, \cdot)$ \big(resp. on $(A, \{ ~, ~ \})$\big) \cite{kosmann-magri}. Hence the result follows.

        (ii) Since $N$ is a Nijenhuis operator on both the commutative associative algebra $(A, \cdot)$ and on the Lie algebra $(A, \{ ~, ~ \})$, the Nijenhuis operators $N^k$ and $N^l$ are compatible Nijenhuis operators on them \cite{kosmann-magri}. In other words, the sum $N^k + N^l$ is also a Nijenhuis operator on both $(A, \cdot)$ and $(A, \{ ~, ~ \})$. The NS-Poisson algebra structure induced by $N^k + N^l$ is given by
        \begin{align*}
            (A, \ast_{N^k} + \ast_{N^l} , \curlyvee_{N^k} + \curlyvee_{N^l}, \diamond_{N^k} + \diamond_{N^l}, \smallblackdiamond_{N^k} +  \smallblackdiamond_{N^l}  ).
        \end{align*}
        This shows that the NS-Poisson algebras $(A, \ast_{N^k}, \curlyvee_{N^k}, \diamond_{N^k}, \smallblackdiamond_{N^k})$ and $(A, \ast_{N^l}, \curlyvee_{N^l}, \diamond_{N^l}, \smallblackdiamond_{N^l})$ are compatible. The last part follows from Proposition \ref{prop-comp-ns}.
    \end{proof}

    \begin{exam}
        Let $A$ be a $4$-dimensional vector space with basis $\{ e_1, e_2, e_3, e_4 \}$. Define a commutative associative multiplication and a Lie bracket on $A$ by
        \begin{align*}
            e_1 \cdot e_1 = e_2, \quad e_1 \cdot e_2 = e_2 \cdot e_1 = e_3, \\
            \{ e_1, e_4 \} = a e_3 + b e_4 = -\{ e_4, e_1 \},
        \end{align*}
        for some constants $a, b \in {\bf k}$. Then $(A, \cdot, \{ ~, ~ \})$ is a Poisson algebra. Moreover, the map $N : A \rightarrow A$ given by
        \begin{align*}
            N (e_1) = r e_1, \quad N (e_2) = r e_2 + s e_3 + t e_4, \quad N (e_3) = r e_3, \quad N (e_4) = r e_4
        \end{align*}
        is a Nijenhuis operator on the Poisson algebra $(A, \cdot, \{ ~, ~ \})$. Thus, by Proposition \ref{prop-nij-nsp}, we have that $(A, \ast, \curlyvee, \diamond, \smallblackdiamond)$ is an NS-Poisson algebra, where
        \begin{align*}
            &e_1 \ast e_1 = r e_2, \quad e_1 \ast e_2 = e_2 \ast e_1 = r e_3, \\
            &e_1 \curlyvee e_1 = -re_2 - s e_3 - t e_4, \quad e_1 \curlyvee e_2 = e_2 \curlyvee e_1 = - r e_3, \\
            & e_1 \diamond e_4 = r a e_3 + r b e_4 = - e_4 \diamond e_1, \quad e_2 \diamond e_1 = - t a e_3 - t b e_4,\\
            & e_1 \smallblackdiamond e_4 = - ra e_3 - rb e_4 = - e_4 \smallblackdiamond e_1.
        \end{align*}
    \end{exam}

    \medskip

In \cite{uchino}, Uchino considered twisted Rota-Baxter operators on associative algebras and showed that they induce NS-algebras. Since Reynolds operators can be realized as twisted Rota-Baxter operators, they also induce NS-algebras. 
These results were generalized to the context of Lie algebras in \cite{das}. Here we will consider twisted Rota-Baxter operators and Reynolds operators on Poisson algebras and show that they induce NS-Poisson algebras. 
First, recall that the cohomology of a Poisson algebra with coefficients in a representation was given by \cite{hueb}. To define twisted Rota-Baxter operators on Poisson algebras, we only require the Poisson $2$-cocycles. Let $(A, \cdot, \{ ~, ~\})$ be a Poisson algebra and $V = (V, \mu, \rho)$ be a representation. A {\bf Poisson $2$-cocycle} of $(A, \cdot, \{ ~, ~\})$ with coefficients in the representation $V$ is a pair $(h, H)$, where

(i) $h \in \mathrm{Hom} (A^{\otimes 2}, V)$ is a $2$-cocycle on the Harrison complex of the commutative associative algebra $(A, \cdot)$ with coefficients in the representation $(V, \mu)$, i.e.
\begin{align}\label{comm-2co}
    h(x, y) = h(y, x) \quad \text{ and } \quad \mu_x h(y, z) + h (x, y \cdot z) = \mu_y h (x, z) + h (y, x \cdot z), \text{ for } x, y, z \in A,
\end{align}

(ii) $H \in \mathrm{Hom}(\wedge^2 A, V)$ is a $2$-cocycle on the Chevalley-Eilenberg complex of the Lie algebra $(A, \{ ~,~\})$ with coefficients in the representation $(V, \rho)$, i.e.
\begin{align}\label{lie-2co}
    \rho_x H (y, z) + \rho_y H (z, x) + \rho_z H (x, y) + H (x, \{ y, z \}) + H (y, \{z, z \}) + H (z, \{ x, y \}) = 0, \text{ for } x, y, z \in A,
\end{align}
that satisfy the following compatibility:
\begin{align}\label{comp-2co}
    H (x, y \cdot z) + \rho_x h (y, z) = h (\{ x, y \}, z) + \mu_z H (x, y) + h (y, \{ x, z \} + \mu_y H (x, z ), \text{ for all } x, y, z \in A.
\end{align}

\begin{remark}\label{remark-exam-2co}
    Let $(A, \cdot, \{ ~, ~\})$ be a Poisson algebra. Then its adjoint representation (also called the regular representation) is given by $(A, \mu, \rho)$, where $\mu_x y = x \cdot y$ and $\rho_x y = \{ x, y \}$, for $x, y \in A$. It is easy to observe that the pair $(h, H) = (\cdot, \{ ~, ~ \})$ is a Poisson $2$-cocycle of $(A, \cdot, \{ ~, ~\})$ with coefficients in the adjoint representation.
\end{remark}

\begin{definition}
    Let $(A, \cdot, \{ ~, ~\})$ be a Poisson algebra and $(V, \mu, \rho)$ be a representation of it. Suppose $(h, H)$ is a Poisson $2$-cocycle of $(A, \cdot, \{ ~, ~\})$ with coefficients in the representation $V$. A linear map $R: V \rightarrow A$ is said to be a {\bf $(h, H)$-twisted Rota-Baxter operator} on $(A, \cdot, \{ ~, ~\})$ with respect to the representation $V$ if it satisfies
    \begin{align}
        R(u) \cdot R(v) =~& R \big(  \mu_{R(u)} v + \mu_{R(v)}u + h ( R(u), R(v)) \big), \label{first-tw}\\
        \{ R(u), R (v) \} =~& R \big(  \rho_{R(u)} v - \rho_{R(v)} u + H ( R(u), R(v)) \big), \label{second-tw}
    \end{align}
    for all $u, v \in V$.
\end{definition}

\begin{thm}\label{thm-twisted-rota-ns}
    Let $R : V \rightarrow A$ be a $(h, H)$-twisted Rota-Baxter operator on $(A, \cdot, \{ ~,~ \})$ with respect to the representation $V$. Then the quintuple $(V, \ast, \curlyvee, \diamond, \smallblackdiamond)$ is an NS-Poisson algebra, where
    \begin{align*}
        u \ast v := ~&\mu_{R(u)} v, \qquad u \curlyvee v := h (R(u), R(v)), \qquad u \diamond v := \rho_{R(u)} v \\
    & \text{ and } ~~~~ u \smallblackdiamond v := H (R(u),R(v)), \text{ for } u, v \in V.
    \end{align*}
\end{thm}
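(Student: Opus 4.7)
The plan is to verify each axiom of the NS-Poisson algebra in turn, relying on the two key identities that characterize a twisted Rota-Baxter operator. Observe first that, writing $u \odot v = u \ast v + v \ast u + u \curlyvee v$ and $\dcb{u}{v} = u \diamond v - v \diamond u + u \smallblackdiamond v$, the defining relations (\ref{first-tw}) and (\ref{second-tw}) are exactly
\[
R(u \odot v) = R(u) \cdot R(v), \qquad R(\dcb{u}{v}) = \{R(u), R(v)\},
\]
for all $u, v \in V$. These two transport identities are what let us push computations on $V$ down to computations in the Poisson algebra $A$.

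First I would check that $(V, \ast, \curlyvee)$ is an NS-commutative algebra. Commutativity of $\curlyvee$ follows from the symmetry of $h$ in (\ref{comm-2co}). The two NS-commutative axioms translate, via the identity $R(u \odot v) = R(u) \cdot R(v)$ and the representation property of $\mu$, into the Harrison $2$-cocycle condition for $h$ (the second equation in (\ref{comm-2co})); this is the Poisson-commutative analogue of Uchino's result recalled in the paper. Similarly, $(V, \diamond, \smallblackdiamond)$ is an NS-Lie algebra: $\smallblackdiamond$ is skew-symmetric since $H$ is, and the two NS-Lie axioms (\ref{nsl1})--(\ref{nsl2}) reduce, via $R(\dcb{u}{v}) = \{R(u),R(v)\}$ and the representation property of $\rho$, to the Chevalley--Eilenberg cocycle identity (\ref{lie-2co}) for $H$. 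This part is precisely the Lie analogue proved in \cite{das}.

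It remains to verify the three compatibilities (\ref{nsp1})--(\ref{nsp3}) of an NS-Poisson algebra. For (\ref{nsp1}), both sides unfold to expressions involving $\mu_{R(v)}$, $\rho_{R(u)}$ and $\mu_{R(\dcb{u}{v})} = \mu_{\{R(u), R(v)\}}$, and the identity collapses to the first representation axiom $\rho_x \circ \mu_y - \mu_y \circ \rho_x = \mu_{\{x,y\}}$ from (\ref{rep-pois1}) evaluated at $x = R(u)$, $y = R(v)$. For (\ref{nsp2}), a similar unfolding uses $R(u \odot v) = R(u) \cdot R(v)$ and reduces the claim to the second representation axiom $\mu_x \circ \rho_y + \mu_y \circ \rho_x = \rho_{x \cdot y}$ from (\ref{rep-pois2}).

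The main obstacle, and the only place where real bookkeeping is needed, is the third compatibility (\ref{nsp3}). Substituting the definitions, the left-hand side becomes
\[
H(R(u), R(v)\cdot R(w)) + \rho_{R(u)}\, h(R(v), R(w)),
\]
while the right-hand side becomes
\[
h(\{R(u), R(v)\}, R(w)) + \mu_{R(w)}\, H(R(u), R(v)) + h(R(v), \{R(u), R(w)\}) + \mu_{R(v)}\, H(R(u), R(w)).
\]
This is exactly the Poisson $2$-cocycle compatibility (\ref{comp-2co}) applied to the triple $(R(u), R(v), R(w))$, so (\ref{nsp3}) follows. Thus all axioms of an NS-Poisson algebra hold, which is what we wanted to prove.
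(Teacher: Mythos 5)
Your proposal is correct and follows essentially the same route as the paper's proof: the NS-commutative and NS-Lie parts reduce (via $R(u\odot v)=R(u)\cdot R(v)$ and $R(\dcb{u}{v})=\{R(u),R(v)\}$) to the cocycle conditions for $h$ and $H$ together with the representation properties, (\ref{nsp1}) and (\ref{nsp2}) collapse to the Poisson representation axioms (\ref{rep-pois1}) and (\ref{rep-pois2}) evaluated at $R(u),R(v)$, and (\ref{nsp3}) is exactly the compatibility (\ref{comp-2co}) applied to $(R(u),R(v),R(w))$. The only cosmetic difference is that the paper establishes the first NS-commutative axiom using only the representation property of $\mu$ (no cocycle condition needed there), whereas you lump both axioms together, but this does not affect correctness.
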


\begin{proof}
    For any $u, v, w \in V$, we have
    \begin{align*}
        u \ast (v \ast w) = \mu_{R(u)} \mu_{R(v)} w = \mu_{R(u) \cdot R(v)} w \stackrel{(\ref{first-tw})}{=} \mu_{R (u \odot v)} w = (u \odot v) \ast v
    \end{align*}
    and
    \begin{align*}
      &  u \ast (v \curlyvee w) + u \curlyvee (v \odot w) - v \ast (u \curlyvee w) - v \curlyvee (u \odot w) \\
       & \stackrel{(\ref{first-tw})}{=} \mu_{R(u)} h ( R(v), R(w)) + h ( R(u), R(v) \cdot R(w)) - \mu_{R(v)} h (R(u), R(w)) - h ( R(v), R(u) \cdot R(w)) = 0.
    \end{align*}
    This shows that $(V, \ast, \curlyvee)$ is an NS-commutative algebra. On the other hand, $R$ satisfies (\ref{second-tw}) implies that $(V, \diamond, \smallblackdiamond)$ is an NS-Lie algebra (see \cite[Proposition 6.7]{das}). Thus, we are only left to verify the identities (\ref{nsp1}), (\ref{nsp2}) and (\ref{nsp3}).

    For any $u, v, w \in V$, we observe that 
    \begin{align*}
     \dcb{u}{v} \ast w - u \diamond (v \ast w) - v \ast (u \diamond w) 
     &= \mu_{ R \dcb{u}{v}} w - \rho_{ R(u)} \rho_{R(v)} w - \mu_{R(v)} \rho_{R(u)} w \\
     &\stackrel{(\ref{second-tw})}{=} \mu_{ \{ R(u), R(v) \} } w -   \rho_{ R(u)} \rho_{R(v)} w - \mu_{R(v)} \rho_{R(u)} w   \stackrel{(\ref{rep-pois1})}{=} 0
    \end{align*}
    and 
    \begin{align*}
        (u \odot v) \diamond w - u \ast (v \diamond w) - v \ast (u \diamond w) 
        &= \rho_{ R (u \odot v)} w - \mu_{R(u)} \mu_{R(v)} w - \mu_{R(v)} \mu_{R(u)} w \\
        &\stackrel{(\ref{first-tw})}{=} \rho_{R(u) \cdot R(v)} w - \mu_{R(u)} \mu_{R(v)} w - \mu_{R(v)} \mu_{R(u)} w \stackrel{(\ref{rep-pois2})}{=} 0 
    \end{align*}
    which proves (\ref{nsp1}) and (\ref{nsp2}). Finally, we have
    \begin{align*}
        &u \smallblackdiamond (v \odot w) + u \diamond (v \curlyvee w) - \dcb{u}{v} \curlyvee w - w \ast (u \smallblackdiamond v) - v \curlyvee \dcb{u}{w} - v \ast (u \smallblackdiamond w) \\
        &= H ( R(u), R(v) \cdot R(w)) + \rho_{R(u)} h ( R(v), R(w)) - h ( \{ R(u), R(v) \}, R(w)) - \mu_{R(w)} H ( R(u), R(v)) \\
        & \qquad \qquad - h ( R(v) , \{ R(u), R(w) \}) - \mu_{R(v)} H ( R(u), R(w))  \stackrel{(\ref{comp-2co})}{=} 0
    \end{align*}
    which verifies (\ref{nsp3}). Hence the result follows.
\end{proof}

In the above theorem, we showed that a twisted Rota-Baxter operator on a Poisson algebra with respect to a representation induces an NS-Poisson algebra structure on the representation space. In the following, we prove the converse. Namely, we show that any NS-Poisson algebra is always induced by a twisted Rota-Baxter operator. Let $(A, \ast, \curlyvee, \diamond, \smallblackdiamond)$ be any NS-Poisson algebra. Consider the subadjacent Poisson algebra $(A, \odot, \dcb{~}{~})$ given in Theorem \ref{prop-subadj-p}. We have also seen that the maps $\mu, \rho : A \rightarrow \mathrm{End} (A)$ given by
\begin{align*}
    \mu_x y = x \ast y ~~~~ \text{ and } ~~~~ \rho_x y = x \diamond y, \text{ for } x, y \in A,
\end{align*}
makes the triple $(A, \mu, \rho)$ into a representation of the subadjacent Poisson algebra $(A, \odot, \dcb{~}{~})$. We now define maps $h \in \mathrm{Hom} (A^{\otimes 2}, A)$ and $H \in \mathrm{Hom} (\wedge^2 A, A)$ by
\begin{align*}
    h (x, y) = x \curlyvee y ~~~~ \text{ and } ~~~~ H (x, y) = x \smallblackdiamond y, \text{ for } x, y \in A.
\end{align*}

\begin{proposition}\label{prop-last}
    With the above notations, the pair $(h, H)$ is a Poisson $2$-cocycle of $(A, \odot, \dcb{~}{~})$ with coefficients in the representation $(A, \mu, \rho)$.

    Moreover, the identity map $\mathrm{Id} : A \rightarrow A$ is a $(h, H)$-twisted Rota-Baxter operator on $(A, \odot, \dcb{~}{~})$ with respect to the representation $(A, \mu, \rho)$. Further, the induced NS-Poisson algebra structure on $A$ coincides with the given one.
\end{proposition}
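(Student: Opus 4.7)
The plan is to establish, in turn, the three conditions defining a Poisson 2-cocycle, then the two defining identities of the twisted Rota-Baxter operator, and finally to confirm that the NS-Poisson algebra structure produced by Theorem \ref{thm-twisted-rota-ns} recovers the given one. The key observation is that, under the translation $h(x,y) \leftrightarrow x \curlyvee y$ and $H(x,y) \leftrightarrow x \smallblackdiamond y$, each condition to be checked will match, term-for-term, one of the defining axioms of the ambient NS-Poisson algebra.

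First I would verify that $h$ is symmetric and satisfies the Harrison 2-cocycle condition (\ref{comm-2co}). The symmetry $h(x,y) = h(y,x)$ is immediate from the commutativity of $\curlyvee$ built into the notion of an NS-commutative algebra. The remaining identity in (\ref{comm-2co}) becomes $x \ast (y \curlyvee z) + x \curlyvee (y \odot z) = y \ast (x \curlyvee z) + y \curlyvee (x \odot z)$, which is exactly the second defining identity of the NS-commutative algebra $(A, \ast, \curlyvee)$. The Chevalley-Eilenberg cocycle condition (\ref{lie-2co}) for $H$ translates, under $\rho_x y = x \diamond y$ and $\{x,y\} = \dcb{x}{y}$, into the NS-Lie identity (\ref{nsl2}), while the mixed compatibility (\ref{comp-2co}) translates directly into the NS-Poisson identity (\ref{nsp3}). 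Thus $(h, H)$ is a Poisson 2-cocycle of $(A, \odot, \dcb{~}{~})$ with coefficients in $(A, \mu, \rho)$.

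Next I would verify the twisted Rota-Baxter identities for $\mathrm{Id}: A \rightarrow A$. With $R = \mathrm{Id}$, equation (\ref{first-tw}) becomes $x \odot y = x \ast y + y \ast x + x \curlyvee y$, which is precisely the definition of $\odot$ recorded in Definition \ref{nspois}, while equation (\ref{second-tw}) becomes $\dcb{x}{y} = x \diamond y - y \diamond x + x \smallblackdiamond y$, which is the definition of $\dcb{~}{~}$. Finally, substituting $R = \mathrm{Id}$ into the four formulas of Theorem \ref{thm-twisted-rota-ns} returns the original $u \ast v, u \curlyvee v, u \diamond v$ and $u \smallblackdiamond v$ verbatim, so the NS-Poisson algebra induced by the twisted Rota-Baxter operator $\mathrm{Id}$ coincides with the given one.

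The main obstacle, such as it is, is pure bookkeeping: carefully matching the six terms in the Lie 2-cocycle identity and the six terms of (\ref{comp-2co}) with their NS-Poisson counterparts, and tracking the sign conventions (in particular, the minus sign in $\rho_{R(u)} v - \rho_{R(v)} u$ of (\ref{second-tw}) is absorbed by the antisymmetric part $x \diamond y - y \diamond x$ of $\dcb{~}{~}$). No substantive new calculation is required beyond Theorem \ref{prop-subadj-p} and the bare definitions, so the proposition is essentially a dictionary-level reformulation asserting that NS-Poisson algebras are exactly the algebraic shadows of twisted Rota-Baxter operators.
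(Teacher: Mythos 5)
Your proposal is correct and follows essentially the same route as the paper: the cocycle conditions (\ref{comm-2co}), (\ref{lie-2co}), (\ref{comp-2co}) are matched respectively to the second NS-commutative axiom, the NS-Lie axiom (\ref{nsl2}), and (\ref{nsp3}), and the twisted Rota-Baxter identities for $\mathrm{Id}$ reduce to the definitions of $\odot$ and $\dcb{~}{~}$. The only difference is that you spell out the dictionary for the first two cocycle conditions slightly more explicitly than the paper does; the substance is identical.
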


\begin{proof}
    Since $(A, \ast, \curlyvee)$ is an NS-commutative algebra, it follows that the map $h$ satisfies (\ref{comm-2co}). Similarly, $(A, \diamond, \smallblackdiamond)$ is an NS-Lie algebra implies that the map $H$ satisfies (\ref{lie-2co}). Next, for any $x, y, z \in A$, 
    \begin{align*}
       & H (x, y \odot z) + \rho_x h (y, z) \\
       & = x \smallblackdiamond (y \odot z) + x \diamond (y \curlyvee z) \\
       & \stackrel{(\text{\ref{nsp3})}}{=} \dcb{x}{y} \curlyvee z + z \ast (x \smallblackdiamond y) + y \curlyvee \dcb{x}{z} + y \ast (x \smallblackdiamond z) \\
       & = h ( \dcb{x}{y}, z) + \mu_z H (x,  y) + h (y, \dcb{x}{z}) + \mu_y H (x, z)
    \end{align*}
    which verifies the condition (\ref{comp-2co}). Hence $(h, H)$ is a Poisson $2$-cocycle of $(A, \odot, \dcb{~}{~})$ with coefficients in the representation $(A, \mu, \rho)$.

    For the second part, we observe that 
    \begin{align*}
        \mathrm{Id}(x) \odot \mathrm{Id} (y) = x \odot y =~& x \ast y + y \ast x + x \curlyvee y \\
        =~& \mathrm{Id} \big(  \mu_{ \mathrm{Id} (x)} y + \mu_{\mathrm{Id} (y)} x + h ( \mathrm{Id} (x), \mathrm{Id} (y) )   \big)
    \end{align*}
    and similarly, $\dcb{ \mathrm{Id} (x) }{ \mathrm{Id} (y) } = \mathrm{Id} \big(  \rho_{\mathrm{Id} (x)} y - \rho_{\mathrm{Id} (y)} x + H ( \mathrm{Id} (x) ,\mathrm{Id} (y)) \big)$, for $x, y \in A$. Hence the identity map $\mathrm{Id} : A \rightarrow A$ is a $(h, H)$-twisted Rota-Baxter operator. 

    Finally, if $(A, \ast', \curlyvee', \diamond', \smallblackdiamond')$ is the induced NS-Poisson algebra structure on $A$, then we have
    \begin{align*}
        x \ast' y =~& \mu_{\mathrm{Id}(x)} y = x \ast y, \qquad \quad x \curlyvee' y = h (\mathrm{Id}(x), \mathrm{Id}(y)) = x \curlyvee y, \\
        x \diamond' y =~& \rho_{\mathrm{Id} (x)} y = x \diamond y ~~~ \text{ and } ~~~ x \smallblackdiamond' y = H (\mathrm{Id}(x), \mathrm{Id}(y)) = x \smallblackdiamond y,
    \end{align*}
    for $x, y \in A$. This concludes the last part.
\end{proof}

A Reynolds operator on a Poisson algebra $(A, \cdot, \{ ~, ~ \})$ is a $(h, H)$-twisted Rota-Baxter operator, where $(h, H) = - (\cdot, \{ ~, ~ \})$ is the Poisson $2$-cocycle of $(A, \cdot, \{ ~, ~ \})$ with coefficients in the adjoint representation (see Remark \ref{remark-exam-2co}).
    
    \begin{definition}
       Let $(A, \cdot , \{~,~\})$ be a Poisson algebra. A linear map $R:A \rightarrow A$ is said to be a {\bf Reynolds operator} on $A$ if for any $x, y \in A$,
        \begin{align}
            R(x)\cdot R(y)&= R ( R(x)\cdot y + x \cdot R(y) -R(x)\cdot R(y)), \label{rey1}\\
            \{R(x),R(y)\}&=R( \{R(x),y\} + \{x,R(y)\} -\{R(x),R(y)\}). \label{rey2}
        \end{align}
    \end{definition}

It follows that \cite{uchino,das} a linear map $R: A \rightarrow A$ is a Reynolds operator on the Poisson algebra  $(A, \cdot, \{~,~\})$ if and only if it is a Reynolds operator on both the commutative associative algebra $(A, \cdot)$ and on the Lie algebra $(A, \{ ~, ~\})$. As a consequence of Theorem \ref{thm-twisted-rota-ns}, we have the following result.
    
    \begin{proposition}\label{prop-rey-nsp}
         Let $ ( A, \cdot, \{ ~, ~\})$ be a Poisson algebra and $R: A \rightarrow A$ be a Reynolds operator on $A$. Define bilinear operations $\ast, \curlyvee, \diamond, \smallblackdiamond : A \times A \rightarrow A$ by 
        $$ x \ast y = R(x) \cdot y ,\quad x \curlyvee y = - R(x) \cdot R(y), \quad x \diamond y = \{R(x), y\} \quad \text{ and  } \quad x \smallblackdiamond y = - \{R(x),R(y)\},$$
        for $x, y \in A$. Then $(A, \ast , \curlyvee , \diamond , \smallblackdiamond)$ is an NS-Poisson algebra.
    \end{proposition}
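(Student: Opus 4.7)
The plan is to obtain this proposition as a direct corollary of Theorem \ref{thm-twisted-rota-ns} by recognizing any Reynolds operator as a twisted Rota-Baxter operator on $(A, \cdot, \{~,~\})$ with respect to the adjoint representation. Concretely, I would take $V = A$ equipped with the adjoint representation $(A, \mu, \rho)$ defined by $\mu_x y = x \cdot y$ and $\rho_x y = \{x, y\}$, and then consider the pair $(h, H) = -(\cdot, \{~,~\})$. By Remark \ref{remark-exam-2co}, the pair $(\cdot, \{~,~\})$ is a Poisson $2$-cocycle of $(A, \cdot, \{~,~\})$ with coefficients in the adjoint representation; since the cocycle conditions (\ref{comm-2co}), (\ref{lie-2co}) and (\ref{comp-2co}) are all linear in $(h, H)$, multiplying by $-1$ again produces a Poisson $2$-cocycle.

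The key step will be to match the Reynolds operator axioms with the $(h, H)$-twisted Rota-Baxter axioms for this choice of cocycle. For (\ref{first-tw}), substituting $\mu_x y = x \cdot y$ and $h(x, y) = -x \cdot y$ gives
\[
R(u) \cdot R(v) = R\bigl( R(u) \cdot v + R(v) \cdot u - R(u) \cdot R(v) \bigr),
\]
which agrees with (\ref{rey1}) after using commutativity of $\cdot$. For (\ref{second-tw}), substituting $\rho_x y = \{x, y\}$ and $H(x, y) = -\{x, y\}$ gives
\[
\{R(u), R(v)\} = R\bigl( \{R(u), v\} - \{R(v), u\} - \{R(u), R(v)\} \bigr),
\]
which matches (\ref{rey2}) after applying the antisymmetry of the Lie bracket.

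With these identifications in place, Theorem \ref{thm-twisted-rota-ns} immediately delivers an NS-Poisson algebra structure on $A$ whose four operations are
\[
u \ast v = \mu_{R(u)} v = R(u) \cdot v, \qquad u \curlyvee v = h(R(u), R(v)) = - R(u) \cdot R(v),
\]
\[
u \diamond v = \rho_{R(u)} v = \{R(u), v\}, \qquad u \smallblackdiamond v = H(R(u), R(v)) = - \{R(u), R(v)\},
\]
which is exactly the structure stated in the proposition. I do not anticipate any serious obstacle: the only genuine thing to be careful about is the bookkeeping of signs and the use of the commutativity of $\cdot$ and the antisymmetry of $\{~,~\}$ when reconciling the Reynolds axioms (\ref{rey1})--(\ref{rey2}) with the twisted Rota-Baxter axioms (\ref{first-tw})--(\ref{second-tw}), which amounts to a short verification rather than a substantive difficulty.
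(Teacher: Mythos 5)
Your proposal is correct and follows exactly the route the paper takes: the paper explicitly defines a Reynolds operator as a $(h,H)$-twisted Rota--Baxter operator with $(h,H) = -(\cdot, \{~,~\})$ for the adjoint representation and states Proposition \ref{prop-rey-nsp} as an immediate consequence of Theorem \ref{thm-twisted-rota-ns}. Your sign bookkeeping (commutativity of $\cdot$ and antisymmetry of $\{~,~\}$ to reconcile (\ref{rey1})--(\ref{rey2}) with (\ref{first-tw})--(\ref{second-tw})) and the linearity argument for why $-(\cdot,\{~,~\})$ is again a $2$-cocycle supply precisely the details the paper leaves implicit.
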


    \begin{exam}
       Let $A$ be a $4$-dimensional vector space with basis \{$e_1, e_2, e_3, e_4$\}. Define a bilinear multiplication and a bracket on $A$ by
        \begin{align*}
            &e_1 \cdot e_1 = e_3, \quad e_1 \cdot e_2 = e_2 \cdot e_1 = e_4,\\
            &\{e_1, e_4\}= -\{e_4, e_1 \} = e_4, \quad \{e_2, e_3 \}= -\{e_3, e_2\} = -2e_4.
        \end{align*}
        Then ($A, \cdot, \{~,~\}$) is a Poisson algebra. We define a linear map $R : A \rightarrow A$ by 
        \begin{align*}
            R(e_1)= ae_1, \quad R(e_3)= \frac{a}{2-a} e_3, \quad R(e_2)= R(e_4)=0,
        \end{align*}
        where $a \neq 2$ is a constant. It is straightforward to check that $R$ is a Reynolds operator on $A$. By Proposition \ref{prop-rey-nsp}, the quintuple $(A, \ast, \curlyvee, \diamond, \smallblackdiamond)$ is an NS-Poisson algebra, where the non-zero products are given by
        \begin{align*}
            &e_1 \ast e_1= a e_3, \quad e_1 \ast e_2 = a e_4,\\
            & e_1 \curlyvee e_1 = -a^2 e_3, \\
            & e_1 \diamond e_4 = a e_4, \quad e_3 \diamond e_2 = \frac{2a}{2-a} e_4.
        \end{align*}
    \end{exam}

\medskip

Since Gerstenhaber algebras \cite{kosmann} are the graded analog of Poisson algebras, one could generalize many results of Poisson algebras to Gerstenhaber algebras. In \cite{aguiar}, Aguiar has considered the notion of a pre-Gerstenhaber algebra as the graded analog of pre-Poisson algebras. In the following, we will introduce NS-Gerstenhaber algebras that simultaneously generalize pre-Gerstenhaber algebras and Gerstenhaber algebras.

    \begin{definition}
        A \textbf{Gerstenhaber algebra} is a triple $(\mathcal{A}, \cdot, \{ ~, ~\})$ consisting of a graded vector space $\mathcal{A} = \oplus_{i \in \mathbb{Z}} \mathcal{A}^i$ with a degree $0$ graded bilinear product $\cdot : \mathcal{A} \times \mathcal{A} \rightarrow \mathcal{A}$ that makes $(\mathcal{A}, \cdot)$ into a graded commutative associative algebra and a degree $-1$ graded bilinear bracket $\{ ~, ~ \} : \mathcal{A} \times \mathcal{A} \rightarrow \mathcal{A}$ satisfying
        \begin{align}\label{gla+1}
            \{x,y\} = -(-1)^{(|x|-1)(|y|-1)} \{y,x \},
            \end{align}
            \begin{align}\label{gla+2}
            (-1)^{(|x|-1)(|z|-1)} \{\{x,y\},z\} + 
            (-1)^{(|y|-1)(|x|-1)} \{\{y, z\},x\} + 
            (-1)^{(|z|-1)(|y|-1)} \{\{z,x\},y\} = 0
         \end{align}
and the following graded Leibniz identity holds:
         $$ \{x, y \cdot z\}= \{x, y \} \cdot z + (-1)^{(|x|-1)|y|} y \cdot \{x,z \},$$
         for all homogeneous elements $x \in \mathcal{A}^{|x|}, y \in \mathcal{A}^{|y|}, z \in \mathcal{A}^{|z|}.$
    \end{definition}

In a Gerstenhaber algebra $(\mathcal{A}, \cdot, \{ ~, ~\})$, the identities (\ref{gla+1}) and (\ref{gla+2}) are equivalent to mean that the shifted graded vector space $\mathcal{A}[+1] = \oplus_{i \in \mathbb{Z}} \mathcal{A}^{i+1}$ with the bracket $\{ ~, ~ \}$  is a graded Lie algebra.
    
    
        Let $(\mathcal{A}, \cdot, \{~,~\})$ be a Gerstenhaber algebra. A {\bf Nijenhuis operator} on $\mathcal{A}$ is a degree $0$ graded linear map $N: \mathcal{A} \rightarrow \mathcal{A}$ that satisfies
        \begin{align*}
            N(x) \cdot N(y) &= N (N(x)\cdot y + x \cdot N(y) - N(x \cdot y)),\\
            \{N(x), N(y)\} &= N(\{N(x), y\} + \{x, N(y)\} - N\{x, y \}), \text{~for all~} x,y \in \mathcal{A}.
        \end{align*}

Before defining NS-Gerstenhaber algebras, we first consider the following notions.

    \begin{definition}
        A \textbf{graded NS-commutative algebra} is a triple $(\mathcal{A}, \ast, \curlyvee)$, where $\mathcal{A} = \oplus_{i \in \mathbb{Z}} \mathcal{A}^i$ is a graded vector space and $\ast , \curlyvee : \mathcal{A} \times \mathcal{A} \rightarrow \mathcal{A}$ are degree $0$ graded bilinear maps in which $\curlyvee$ is graded commutative (i.e. $x\curlyvee y=(-1)^{|x||y|}y\curlyvee x$) and satisfies the following compatibility conditions:
        \begin{align}
            x \ast (y \ast z) =~& (x \odot y ) \ast z,\\
            x \ast (y \curlyvee z) + x \curlyvee ( y \odot z) =~& (-1)^{|y||x|} y \ast (x \curlyvee z) + (-1)^{|y||x|} y \curlyvee(x \odot z ),
        \end{align}
        for $x \in \mathcal{A}^{|x|}, y \in \mathcal{A}^{|y|}$ and $z \in \mathcal{A}^{|z|}$.
        Here we have used $x\odot y= x \ast y + (-1)^{|x||y|}y\ast x + x \curlyvee y$.
    \end{definition}

    \begin{definition}
        A \textbf{graded NS-Lie algebra} is triple $(\mathcal{A}, \diamond, \smallblackdiamond)$, where $\mathcal{A} = \oplus_{i \in \mathbb{Z}} \mathcal{A}^i$ is a graded vector space and $\diamond , \smallblackdiamond : \mathcal{A} \times \mathcal{A} \rightarrow \mathcal{A}$ are degree $0$ graded bilinear maps in which $\smallblackdiamond$ is graded skewsymmetric (i.e. $x\smallblackdiamond y = - (-1)^{|x||y|}y\smallblackdiamond x$) and satisfies the following compatibilities:
        \begin{align}
                 & x \diamond (y \diamond z) - (x \diamond y) \diamond z - (-1)^{|x||y|} (y \diamond (x \diamond z) - (y \diamond x) \diamond z)= (x \smallblackdiamond y) \diamond z,\\
            \begin{split}
                 x \diamond (y \smallblackdiamond z) + x \smallblackdiamond \dcb{y}{z} + 
                (-1)^{|x|(|y|+|z|)}(y\diamond (z \smallblackdiamond x)+y\smallblackdiamond \dcb{z}{x})\\ 
                +(-1)^{|z|(|x|+|y|)}(z\diamond (x\smallblackdiamond y)+ z \smallblackdiamond \dcb{x}{y})=0,
            \end{split}
        \end{align}
        for $x \in \mathcal{A}^{|x|}, y \in \mathcal{A}^{|y|}$ and $z \in \mathcal{A}^{|z|}$.
        Here we used $\dcb{x}{y}= x \diamond y- (-1)^{|x||y|}y\diamond x + x \smallblackdiamond y$.
    \end{definition}
    
    
    \begin{definition}\label{NSGer} An \textbf{NS-Gerstenhaber algebra} is a quintuple $(\mathcal{A}, \ast, \curlyvee, \diamond, \smallblackdiamond)$, where $\mathcal{A} = \oplus_{i \in \mathbb{Z}} \mathcal{A}^i$ is a graded vector space and $\ast, \curlyvee : \mathcal{A} \times \mathcal{A} \rightarrow \mathcal{A}$ are degree $0$ graded bilinear maps that make $(\mathcal{A}, \ast, \curlyvee)$ into a graded NS-commutative algebra, and $\diamond, \smallblackdiamond : \mathcal{A} \times \mathcal{A} \rightarrow \mathcal{A}$ are degree $-1$ graded bilinear maps that make $(\mathcal{A}[+1], \diamond, \smallblackdiamond)$ into a graded NS-Lie algebra satisfying the following compatibilities:
        \begin{align}
            & \dcb{x}{y}\ast z = x\diamond (y\ast z) - (-1)^{(|x|-1)|y|}y \ast (x\diamond z),\\
            & (x\odot y)\diamond z= x\ast (y\diamond z)+ (-1)^{|x||y|}y\ast (x\diamond z),\\
            \begin{split}
             x \smallblackdiamond (y\odot z)+ x\diamond (y\curlyvee z)=\dcb{x}{y}\curlyvee z + (-1)^{|z|(|x|+|y|-1)}z\ast (x\smallblackdiamond y)\\+ (-1)^{(|x|-1)|y|}(y\curlyvee \dcb{x}{z} + y\ast (x\smallblackdiamond z)),
            \end{split}
        \end{align}
        for all homogeneous elements $x \in \mathcal{A}^{|x|},y \in \mathcal{A}^{|y|}$ and $z \in \mathcal{A}^{|z|}$. Here we used
        \begin{align}\label{NSGer-Ger}
        x\odot y = x\ast y+ (-1)^{|x||y|}y\ast x + x \curlyvee y ~ \text{ and } ~ \dcb{x}{y}=x\diamond y - (-1)^{(|x|-1)(|y|-1)}y\diamond x + x \smallblackdiamond y.
        \end{align}
    \end{definition}

    \begin{remark}
        Let $(\mathcal{A}, \ast, \curlyvee, \diamond, \smallblackdiamond)$ be an NS-Gerstenhaber algebra. If $\curlyvee$ and $\smallblackdiamond$ are trivial then $(\mathcal{A}, \ast, \diamond)$ is a pre-Geratenhaber algebra \cite{aguiar}. On the other hand, if $\ast$ and $\diamond$ are trivial then $(\mathcal{A}, \curlyvee, \smallblackdiamond = \{ ~, ~\})$ becomes a Gerstenhaber algebra.
    \end{remark}

        The following results are the graded analog of Theorem \ref{prop-subadj-p} and Proposition \ref{prop-nij-nsp}, respectively. Hence we will not repeat the similar proofs here.

    \begin{proposition}
        Let $(\mathcal{A}, \ast , \curlyvee, \diamond, \smallblackdiamond)$ be an NS-Gerstenhaber algebra. Then the triple $(\mathcal{A}, \odot, \dcb{~}{~} ) $ is a Gerstenhaber algebra, where $\odot$ and $\dcb{~}{~}$ are defined in (\ref{NSGer-Ger}).
    \end{proposition}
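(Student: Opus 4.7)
The plan is to imitate the proof of Theorem \ref{prop-subadj-p} while carefully tracking the Koszul signs. Two of the three Gerstenhaber axioms are essentially free. Since $(\mathcal{A}, \ast, \curlyvee)$ is a graded NS-commutative algebra, the operation $\odot$ defined in (\ref{NSGer-Ger}) makes $(\mathcal{A}, \odot)$ into a graded commutative associative algebra, which supplies the product side of the Gerstenhaber structure. Similarly, because $(\mathcal{A}[+1], \diamond, \smallblackdiamond)$ is a graded NS-Lie algebra, the bracket $\dcb{~}{~}$ in (\ref{NSGer-Ger}) makes $(\mathcal{A}[+1], \dcb{~}{~})$ into a graded Lie algebra; translated back to $\mathcal{A}$ this is exactly the graded skew-symmetry (\ref{gla+1}) and graded Jacobi identity (\ref{gla+2}) for a degree $-1$ bracket.

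The substantive work is the graded Leibniz identity $\dcb{x}{y \cdot z} = \dcb{x}{y} \cdot z + (-1)^{(|x|-1)|y|} y \cdot \dcb{x}{z}$ with $\cdot = \odot$. For homogeneous $x, y, z$, I would expand both sides by plugging in the definitions of $\odot$ and $\dcb{~}{~}$. The right-hand side decomposes into nine terms (three from each of $\dcb{x}{y} \odot z$ and $y \odot \dcb{x}{z}$, each itself a sum of three), and these should regroup into three blocks exactly as in the ungraded computation on page~\pageref{prop-subadj-p}: (a) terms of shape ``$\dcb{x}{y}\ast z + (\pm) y \ast (x \diamond z)$'' which by the graded version of (NSP1) collapse into $x \diamond (y \ast z)$ and the corresponding reflected term; (b) the pair ``$z \ast (x \diamond y) + \dcb{x}{z} \ast y$'' and its mirror, which by the graded analog of (NSP1) again reduce to $\diamond$ of a $\ast$-product; the $\odot$ on the left recovered via graded (NSP2); and (c) the block mixing $\smallblackdiamond$ and $\curlyvee$, which is exactly the content of the graded identity (NSP3). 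Summing the three blocks produces $x \diamond (y \odot z) - (-1)^{(|x|-1)|y \odot z|}(y \odot z) \diamond x + x \smallblackdiamond (y \odot z) = \dcb{x}{y \odot z}$.

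The main obstacle is sign bookkeeping. Three distinct Koszul factors appear: $(-1)^{|x||y|}$ from swapping in $\odot$, $(-1)^{(|x|-1)(|y|-1)}$ from swapping in $\dcb{~}{~}$, and $(-1)^{(|x|-1)|y|}$ from sliding the odd operator $\dcb{x}{-}$ past $y$ in the Leibniz rule. The relation $(-1)^{(|x|-1)(|y|-1)} = (-1)^{(|x|-1)|y|}(-1)^{|x|-1}$ and the shift by one between $\ast$-degree and $\diamond$-degree must be applied consistently when matching the three axioms of Definition \ref{NSGer} to the three blocks above. Once the signs are aligned the computation is strictly parallel to the ungraded case, so I expect no conceptual difficulty beyond this careful accounting.
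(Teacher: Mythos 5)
Your approach is exactly the paper's: the paper omits this proof, stating it is the graded analog of Theorem \ref{prop-subadj-p}, and your plan --- reading off graded commutative associativity of $\odot$ and the shifted graded Lie structure of $\dcb{~}{~}$ from the graded NS-commutative and graded NS-Lie axioms, then verifying the graded Leibniz identity by regrouping the expansion into the blocks governed by the three graded compatibility conditions of Definition \ref{NSGer} --- is precisely that analog. One small correction to your final displayed identity: by (\ref{NSGer-Ger}) the Koszul sign on $(y\odot z)\diamond x$ should be $(-1)^{(|x|-1)(|y|+|z|-1)}$, i.e.\ it involves $|y\odot z|-1$ rather than $|y\odot z|$.
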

    
    \begin{proposition}
        Let $(\mathcal{A}, \cdot , \{~,~\})$ be a Gerstenhaber algebra and $N : \mathcal{A} \rightarrow \mathcal{A}$ be a Nijenhuis operator on $\mathcal{A}$. We define maps $\ast, \curlyvee, \diamond , \smallblackdiamond : \mathcal{A} \times \mathcal{A} \rightarrow \mathcal{A}$ by
        $$ x \ast y = N(x)\cdot y, \quad x \curlyvee y = - N(x \cdot y), \quad x \diamond y = \{N(x), y\} \text{~~ and ~~ } x \smallblackdiamond y = -N\{x,y\},$$
        for $x, y \in \mathcal{A}$. Then $(\mathcal{A}, \ast , \curlyvee , \diamond , \smallblackdiamond)$ is an NS-Gerstenhaber algebra.
    \end{proposition}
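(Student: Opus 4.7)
The plan is to mirror the proof of Proposition \ref{prop-nij-nsp}, replacing every identity by its Koszul-signed graded analog and invoking the graded Leibniz rule of the Gerstenhaber algebra wherever the proof of Proposition \ref{prop-nij-nsp} used the ordinary Leibniz rule of a Poisson algebra. Since the statement decomposes into three types of axioms (graded NS-commutative, graded NS-Lie, and the three compatibilities of Definition \ref{NSGer}), I would organize the proof along those three blocks.

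First I would argue that $(\mathcal{A}, \ast, \curlyvee)$ is a graded NS-commutative algebra. Because $N$ is a Nijenhuis operator on the graded commutative associative algebra $(\mathcal{A}, \cdot)$, a direct graded analog of Proposition \ref{nc} gives both the NS-commutative axioms and the key consequence $N(x \odot y) = N(x) \cdot N(y)$ for homogeneous $x, y$, where $x \odot y = N(x) \cdot y + (-1)^{|x||y|} N(y)\cdot x - N(x \cdot y)$. Similarly, viewing $N : \mathcal{A}[+1] \to \mathcal{A}[+1]$ as a Nijenhuis operator on the graded Lie algebra $(\mathcal{A}[+1], \{~,~\})$, a graded analog of Proposition \ref{nl} shows that $(\mathcal{A}[+1], \diamond, \smallblackdiamond)$ is a graded NS-Lie algebra and yields $N\dcb{x}{y} = \{N(x), N(y)\}$. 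Both graded analogs are proved by the same algebraic manipulations as in the ungraded case, with Koszul signs inserted at each transposition.

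For the three compatibility axioms, I would proceed exactly as in the calculation at the end of Proposition \ref{prop-nij-nsp}, but applying the graded Leibniz identity. The first reduces to $\dcb{x}{y} \ast z = \{N(x), N(y)\} \cdot z$ on the left and $\{N(x), N(y) \cdot z\} - (-1)^{(|x|-1)|y|} N(y) \cdot \{N(x), z\}$ on the right, which coincide by the graded Leibniz identity. The second is dual: the left side equals $\{N(x) \cdot N(y), z\}$, while the right expands via graded Leibniz to the same expression. For the third compatibility, one substitutes the definitions of $\ast, \curlyvee, \diamond, \smallblackdiamond$ into both sides, factors out a single outer $N$ using the Nijenhuis identities in both the product $\cdot$ and the bracket $\{~,~\}$, and then verifies that the remaining expression inside $N$ is a combination of graded Leibniz terms that cancels.

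The principal obstacle is the sign bookkeeping in the third compatibility, where each term simultaneously involves the intrinsic grading of $\mathcal{A}$ and the $-1$ degree shift carried by the bracket $\{~,~\}$ (and hence by $\diamond$ and $\smallblackdiamond$). The safest route is to fix the sign convention in (\ref{NSGer-Ger}) at the outset, rewrite every occurrence of $\dcb{~}{~}$ and $\odot$ using that convention, and then check term-by-term that the Koszul factors produced by applying the graded Leibniz rule to $\{N(x), N(y \cdot z)\}$ and its cyclic partners match the signs prescribed in Definition \ref{NSGer} exactly. Once these signs are tracked carefully, the cancellation is essentially the same identity that appears in the ungraded proof of Proposition \ref{prop-nij-nsp}.
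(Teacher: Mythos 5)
Your proposal is correct and takes essentially the same route as the paper: the paper omits this proof entirely, stating only that it is the graded analog of Proposition \ref{prop-nij-nsp}, and your plan of repeating that argument with Koszul signs and the graded Leibniz identity (including the key facts $N(x\odot y)=N(x)\cdot N(y)$ and $N\dcb{x}{y}=\{N(x),N(y)\}$) is precisely the intended argument. Your sign checks for the first two compatibilities are consistent with Definition \ref{NSGer}, so nothing further is needed.
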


\section{NS-Poisson algebras arising from deformations and filtrations of NS-algebras} \label{section4}

In this section, we first define the semi-classical limit of an NS-algebra deformation of a given NS-commutative algebra. We show that the semi-classical limit carries an NS-Poisson algebra structure. This generalizes the well-known semi-classical limit of an associative algebra deformation of a commutative associative algebra. Next, we consider filtrations of an NS-algebra and show that some particular classes of filtrations produce NS-Poisson algebra structures.

    Let $(A, \cdot)$ be a commutative associative algebra. An associative formal deformation of $A$ is given by a ${\bf k}[ \! [ t ] \! ]$-bilinear multiplication $\cdot_t : A[\![t]\!] \times A[\![t]\!] \rightarrow A[\![t]\!]$ of the form
    \begin{align*} x \cdot_t y = x \cdot_0 y + (x \cdot_1 y)t + (x \cdot_2 y)t^2 + (x \cdot_3 y)t^3 + \cdots ~~~ (\text{with }  x \cdot_0 y = x \cdot y, \text{ for } x, y \in A)
    \end{align*}
    that makes $(A[\![t]\!], \cdot_t )$ into an associative algebra over the ring ${\bf k}[ \! [ t ] \! ]$.
    Then we can define a bilinear skewsymmetric bracket $\{ ~, ~ \} : A \times A \rightarrow A$ by
    $$\{x,y\} := \frac{x \cdot_t y - y \cdot_t x}{t} \big|_{t=0} = x \cdot_1 y- y \cdot_1 x, \text{ for } x, y \in A.$$
Then it is well-known that $\{ ~, ~ \}$ is a Lie bracket on $A$ and satisfies the Leibniz rule in the sense of (\ref{leib-rule}). In other words, $(A, \cdot, \{ ~, ~ \})$ is a Poisson algebra, called the {\bf semi-classical limit} of $(A[\![t]\!], \cdot_t)$.

In \cite{aguiar}, Aguiar generalized the above construction in the context of pre-Poisson algebras. Explicitly, he first considered zinbiel algebras (viewed them as commutative dendriform algebras) and their dendriform deformations. Then, using a similar construction as above, he showed that the underlying space inherits a pre-Poisson algebra structure.

   In the following, we generalize both the above constructions by considering NS-commutative algebras and their NS-algebra deformations. For more details about deformations of NS-algebras and their relations with cohomology, we refer \cite{das2}.

   Let $(A, \ast, \curlyvee_0)$ be an NS-commutative algebra. We realize it as an NS-algebra $(A, \prec_0, \succ_0, \curlyvee_0)$ by setting $x \succ_0 y = y \prec_0 x = x \ast y$, for all $x, y \in A$. An NS-algebra deformation of $(A, \ast, \curlyvee_0)$ is a deformation of $(A, \prec_0, \succ_0, \curlyvee_0)$ as an NS-algebra. Thus, it consist of ${\bf k} [ \! [ t ] \! ]$-bilinear maps $\prec_t, \succ_t, \curlyvee_t : A[\![t]\!] \times A[\![t]\!] \rightarrow A[\![t]\!]$ of the form 
   \begin{align*}
          x \prec_t y = \sum_{i=0}^\infty (x \prec_i y) t^i, \quad x \succ_t y = \sum_{i=0}^\infty (x \succ_i y) t^i ~~~ \text{ and } ~~~ x \curlyvee_t y = \sum_{i=0}^\infty (x \curlyvee_i y) t^i
   \end{align*}
   that makes $( A[\![t]\!], \prec_t, \succ_t, \curlyvee_t)$ into an NS-algebra over the ring ${\bf k} [\![t]\!].$ Then we can define bilinear maps $\diamond, \smallblackdiamond : A \times  A \rightarrow A$ by
\begin{align*}
        x \diamond y :=~& \frac{x \succ_t y - y \prec_t x}{t} \Big|_{t=0} = x \succ_1 y - y \prec_1 x,\\
         x \smallblackdiamond y :=~& \frac{x \curlyvee_t y - y \curlyvee_t x}{t} \Big|_{t=0} = x \curlyvee_1 y - y \curlyvee_1 x.
    \end{align*}

    \begin{thm}\label{thm-defor} 
    Let $(A, \ast, \curlyvee_0)$ be an NS-commutative algebra and let $(A[\![t]\!]), \prec_t, \succ_t, \curlyvee_t)$ be an NS-algebra deformation of it. Then with the above notations, $(A, \ast, \curlyvee_0, \diamond, \smallblackdiamond)$ is an NS-Poisson algebra.
    \end{thm}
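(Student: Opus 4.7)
The plan is to unfold the four NS-algebra axioms (\ref{nsalg1})--(\ref{nsalg4}) for $(\prec_t, \succ_t, \curlyvee_t)$ as formal identities in $t$ and compare coefficients. The $t^0$-coefficients reproduce the NS-commutative axioms for $(A, \ast, \curlyvee_0)$ via $x \succ_0 y = y \prec_0 x = x \ast y$, which is given. The substantive content lives at order $t^1$, which will supply the compatibilities (\ref{nsp1})--(\ref{nsp3}), and at order $t^2$, which will supply the NS-Lie axioms (\ref{nsl1}), (\ref{nsl2}).

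A useful organizational observation is that the subadjacent bracket of the proposed NS-Lie structure satisfies
\begin{align*}
\dcb{x}{y} = x \diamond y - y \diamond x + x \smallblackdiamond y = x \odot_1 y - y \odot_1 x,
\end{align*}
where $\odot_t = \prec_t + \succ_t + \curlyvee_t$ is associative on $A[\![t]\!]$ by the remark after Definition \ref{nsalg}. Since $\odot_0$ is the commutative associative subadjacent product of $(A, \ast, \curlyvee_0)$, the standard semi-classical-limit construction already makes $(A, \odot_0, \dcb{~}{~})$ into a Poisson algebra; in particular the Jacobi identity for $\dcb{~}{~}$ and its Leibniz compatibility with $\odot_0$ come for free, and the proof reduces to checking that the refinement $(\ast, \curlyvee_0, \diamond, \smallblackdiamond)$ of this Poisson structure satisfies the NS-level axioms.

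For compatibility (\ref{nsp1}), I would antisymmetrize the $t^1$-coefficient of (\ref{nsalg3}) in its first two arguments (using $\odot_0$-commutativity to kill the term $(x \odot_0 y - y \odot_0 x) \succ_1 z$) and then invoke the $t^1$-coefficient of (\ref{nsalg2}), relabelled $(x, y, z) \mapsto (y, z, x)$, to trade the residual $\succ_1$-terms against $\prec_1$-terms so that the right-hand side lines up with $x \diamond (y \ast z) - y \ast (x \diamond z)$. A dual manipulation using (\ref{nsalg1}) and (\ref{nsalg3}) yields (\ref{nsp2}), and the $t^1$-coefficient of (\ref{nsalg4}) antisymmetrized in $x \leftrightarrow y$, together with the commutativity of $\curlyvee_0$ and $\odot_0$, yields (\ref{nsp3}). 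For the NS-Lie axioms, the relevant alternating sums of double compositions of $\succ_1, \prec_1, \curlyvee_1$ are matched against alternating sums of the $t^2$-coefficients of (\ref{nsalg1})--(\ref{nsalg4}); the unwanted contributions from the second-order deformations $\prec_2, \succ_2, \curlyvee_2$ cancel in these alternating combinations, exactly as in the classical derivation of the Jacobi identity for $\dcb{~}{~}$ from $t^2$-associativity of $\odot_t$.

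The main obstacle is the combinatorial bookkeeping: each $t^j$-coefficient produces four to six summands, and each NS-Poisson axiom is an alternating sum of six to eight such terms. I would organize the computation by systematically rewriting $\prec_0 = \ast^{\mathrm{op}}$ and $\succ_0 = \ast$, grouping terms by which first-order operation ($\prec_1, \succ_1, \curlyvee_1$) they contain, and matching against the target identities. Apart from this bookkeeping, the argument is a routine expansion modelled on the Nijenhuis case of Proposition \ref{prop-nij-nsp}.
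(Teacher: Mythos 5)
Your proposal is correct and follows essentially the same route as the paper: the three compatibilities are extracted from the order-$t$ coefficients of the four NS-algebra axioms, and your recipe for (\ref{nsp1}) --- antisymmetrize the order-$t$ part of (\ref{nsalg3}) in $x,y$ (the $(x\odot_0 y - y\odot_0 x)\succ_1 z$ term dying by commutativity) and add the order-$t$ part of (\ref{nsalg2}) relabelled $(x,y,z)\mapsto(y,z,x)$ --- is exactly the combination the paper uses. Two small bookkeeping corrections: the paper's derivation of (\ref{nsp2}) draws on the order-$t$ parts of all three of (\ref{nsalg1}), (\ref{nsalg2}), (\ref{nsalg3}) (your version via (\ref{nsalg1}) and (\ref{nsalg3}) alone goes through only if you also feed in the already-proved (\ref{nsp1})), and (\ref{nsp3}) requires summing three permuted instances of the order-$t$ part of (\ref{nsalg4}) (roughly $(x,y,z)$, $(y,z,x)$ and $(y,x,z)$), not merely the $x\leftrightarrow y$ antisymmetrization. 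The one place where the paper is genuinely slicker is the NS-Lie axioms: instead of verifying by hand that the $\prec_2,\succ_2,\curlyvee_2$ contributions cancel in alternating sums of $t^2$-coefficients, it sets $x\diamond_t y := x\succ_t y - y\prec_t x$ and $x\smallblackdiamond_t y := x\curlyvee_t y - y\curlyvee_t x$ on $A[\![t]\!]$, invokes the skew-symmetrization result of \cite{das} to conclude that $(A[\![t]\!],\diamond_t,\smallblackdiamond_t)$ is an NS-Lie algebra over ${\bf k}[\![t]\!]$, and then observes that both operations vanish at $t=0$ (since $\succ_0$ is the opposite of $\prec_0$ and $\curlyvee_0$ is commutative), so every quadratic expression in them is $O(t^2)$ with leading coefficient built solely from $\diamond$ and $\smallblackdiamond$; the second-order deformation terms never enter. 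Adopting this device would eliminate the cancellation check you flag as the delicate part of your $t^2$-analysis.
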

    
    \begin{proof}
        To prove that $(A, \ast , \curlyvee_0 , \diamond , \smallblackdiamond)$ is an NS-Poisson algebra, we have to show that $(A, \diamond , \smallblackdiamond)$ is an NS-Lie algebra and the identities (\ref{nsp1}), (\ref{nsp2}), (\ref{nsp3}) hold. To show $(A, \diamond, \smallblackdiamond)$ is NS-Lie algebra, we first define the ${\bf k} [\![t]\!]$-bilinear operations $\diamond_t , \smallblackdiamond_t : A  [\![t]\!] \times A  [\![t]\!] \rightarrow A  [\![t]\!]$ by
                \begin{align*}
            x \diamond_t y := x \succ_t y - y \prec_t x &= (x \succ_0 y - y \prec_0 x) + (x \succ_1 y - y \prec_1 x)t + (x \succ_2 y - y \prec_2 x)t^2 + \cdots\\
            &= (x \diamond y)t + (x \succ_2 y - y \prec_2 x)t^2 + \cdots, \\
            x \smallblackdiamond_t y := x \curlyvee_t y - y \curlyvee_t x &= (x \curlyvee_0 y - y \curlyvee_0 x) + (x \curlyvee_1 y - y \curlyvee_1 x)t + (x \curlyvee_2 y - y \curlyvee_2 x)t^2 + \cdots \\
            &= (x \smallblackdiamond y)t + (x \curlyvee_2 y - y \curlyvee_2 x)t^2 + \cdots,
        \end{align*}
         for $x, y \in A$. Then $(A  [\![t]\!], \diamond_t , \smallblackdiamond_t)$ becomes an NS-Lie algebra over the ring ${\bf k}  [\![t]\!]$ (see \cite{das}). We observe that
        \begin{align*}
            (x \diamond_t y) \diamond_t z &= ((x \diamond y) \diamond z)~ t^2 + \mathcal{O}(t^3),\\
            x \diamond_t (y \diamond_t z) &= (x \diamond (y \diamond z))~ t^2 + \mathcal{O}(t^3),\\
            ( x \smallblackdiamond_t y) \diamond_t z &= ((x \smallblackdiamond y) \diamond z)~ t^2 + \mathcal{O}(t^3).
        \end{align*}
        Thus, while writing the expresion (\ref{nsl1}) for the algebra $(A[\![t]\!], \diamond_t , \smallblackdiamond_t)$ and equating the coefficient of $t^2$ in both sides, one obtains the identity (\ref{nsl1}) for the algebra $(A, \diamond, \smallblackdiamond)$. In a similar manner, we can verify the identity (\ref{nsl2}) for the algebra $(A, \diamond, \smallblackdiamond)$.
        The operation $\smallblackdiamond$ is also skewsymmetric. Thus, $(A, \diamond, \smallblackdiamond)$ is an NS-Lie algebra.

        On the other hand, since $(A[\![t]\!], \prec_t, \succ_t, \curlyvee_t)$ is an NS-algebra, we have the following set of identities:
        \begin{align*}
            \sum_{i+j=n} ( (x \prec_i y) \prec_j z )t^n =~& \sum_{i+j=n}( x \prec_i (y \prec_j z + y \succ_j z + y \curlyvee_j z) )t^n,\\
            \sum_{i+j=n} ( (x \succ_i y) \prec_j z )t^n =~& \sum_{i+j=n} ( x \succ_i (y \prec_j z) )t^n,\\        
            \sum_{i+j=n} ( x \succ_i (y \succ_j z) )t^n =~& \sum_{i+j=n}( (x \prec_i y + x \succ_i y + x \curlyvee_i y ) \succ_j z)t^n,\\
            \sum_{i+j=n} ( (x \curlyvee_i y ) \prec_j z ~+~& ( x \prec_i y + x \succ_i y + x \curlyvee_i y) \curlyvee_j z )t^n \\
            =~& \sum_{i+j=n} ( x \succ_i (y \curlyvee_j z)+ x \curlyvee_i ( y \prec_j z + y \succ_j z +y \curlyvee_j z))t^n,          
            \end{align*}
        for all $n \geq 0$ and $x, y, z \in A$.
        For $n=0$, the above four conditions hold as $(A, \succ_0, \prec_0 , \curlyvee_0)$ is an NS-algebra. Next putting $n=1$ and writing $x \succ_0 y = x \ast y$, $x \prec_0 y = y \ast x$, we get the following identities
        \begin{align}
             (y \ast x) \prec_1 z + z \ast (x \prec_1 y) =~& (y \prec_1 z + y \succ_1 z + y \curlyvee_1 z) \ast x + x \prec_1 (z \ast y + y \ast z + y \curlyvee_0 z), \label{1}\\ 
             x \succ_1 (z \ast y ) + x \ast (y \prec_1 z) =~& z \ast (x \succ_1 y) + (x \ast y)\prec_1 z, \label{2}\\
             x \succ_1 (y \ast z ) + x \ast (y \succ_1 z) =~& (x \ast y + y \ast x + x \curlyvee_0 y) \succ_1 z + (x \prec_1 y + x \succ_1 y + x \curlyvee_1 y) \ast z, \label{3}\\
            (x \curlyvee_0 y) \prec_1 z + z \ast (x \curlyvee_1 y) ~+& (x \prec_1 y + x \succ_1 y + x \curlyvee_1 y) \curlyvee_0 z + (y \ast x + x \ast y + x  \curlyvee_0 y) \curlyvee_1 z \label{4} \\
            = x \succ_1 (y \curlyvee_0 z) + x \ast (y ~ \curlyvee_1 & z) + x \curlyvee_0 ( y \prec_1 z + y \succ_1 z + y \curlyvee_1 z) + x \curlyvee_1 (z \ast y +y \ast z + y \curlyvee_0 z). \nonumber
        \end{align}
        We will use these identities to prove the compatibility conditions of the NS-Poisson algebra $(A, \ast, \curlyvee_0, \diamond, \smallblackdiamond)$. First, from (\ref{2}) and (\ref{3}) we get that
        \begin{align*}
             (y \ast z) \prec_1 x-y \ast (z \prec_1 x) =~& y \succ_1 (x \ast z)- x \ast (y \succ_1 z),\\
            (x \succ_1 y + x \prec_1 y + x \curlyvee_1 y) \ast z -x \succ_1(y \ast z)=~& x \ast (y \succ_1 z)- (x \ast y + y \ast x + x \curlyvee_0 y) \succ_1 z,\\
             y \ast (x \succ_1 z)-(y\succ_1 x+ y \prec_1 x+ y \curlyvee_1 x)\ast z =~& (x \ast y + y \ast x + x \curlyvee_0 y) \succ_1 z -y \succ_1 (x \ast z).
        \end{align*}
        By adding these identities, we get
        \begin{align*}
            &(x \succ_1 y + x \prec_1 y + x \curlyvee_1 y- y\succ_1 x- y \prec_1 x- y \curlyvee_1 x )\ast z- (x \succ_1(y \ast z)-(y \ast z) \prec_1 x)\\
            & \qquad \qquad  \qquad \qquad + ( y \ast (x \succ_1 z)-y \ast (z \prec_1 x))=0.
            \end{align*}
            This is equivalent to $(x\diamond y- y\diamond x+ x\smallblackdiamond y)\ast z- x\diamond (y\ast z)+ y\ast (x\diamond z)=0$ or simply
            \begin{align*}
             \dcb{x}{y}\ast z- x\diamond (y\ast z)+ y\ast (x\diamond z)=0. \end{align*}
            Thus, (\ref{nsp1}) holds in $A$. Next, from (\ref{1}), (\ref{2}) and (\ref{3}), we get that
        \begin{align*}
            y \ast (z \prec_1 x)-z\prec_1(x\ast y + y \ast x + x \curlyvee_0 y)=~& (x \prec_1 y + x \succ_1 y + x \curlyvee_1 y)\ast z- (x\ast z)\prec_1 y,\\
             x\ast (z \prec_1 y)- y\ast (x \succ_1 z)=~& (x \ast z)\prec_1 y- x \succ_1 (y\ast z),\\
             (x \ast y+ y \ast x + x \curlyvee_0 y)\succ_1 z- x \ast (y \succ_1 z)=~& x \succ_1 (y \ast z)- (x \prec_1 y + x \succ_1 y + x \curlyvee_1 y)\ast z.
        \end{align*}
        By adding these identities, we obtain 
        \begin{align*}
            &(x \ast y+ y \ast x + x \curlyvee_0 y)\succ_1 z-z\prec_1(x\ast y + y \ast x + x \curlyvee_0 y)-x\ast (y \succ_1 z- z \prec_1 y)\\& \qquad \qquad \qquad \qquad  -y\ast (x \succ_1 z- z \prec_1 x)=0.
            \end{align*}
            This is same as $(x \ast y+ y \ast x + x \curlyvee_0 y)\diamond z- x\ast (y \diamond z)- y \ast (x\diamond z)=0$ or equivalently
            $$(x \odot y)\diamond z -x\ast (y \diamond z)- y \ast (x\diamond z)=0.$$
        This proves (\ref{nsp2}). Finally, from (\ref{4}), we have
        \begin{align*}
            & x \curlyvee_1 (y \ast z + z \ast y + y \curlyvee_0 z) + x \succ_1 (y \curlyvee_0 z) - z \ast (x \curlyvee_1 y)- (x \succ_1 y +x \prec_1 y + x \curlyvee_1 y ) \curlyvee_0 z\\
            & \quad = (x \ast y + y \ast x + x \curlyvee_0 y) \curlyvee_1 z + (x \curlyvee_0 y) \prec_1 z - x \curlyvee_0 (y \prec_1 z + y \succ_1 z + y \curlyvee_1 z) - x \ast (y \curlyvee_1 z),\\
            & y \ast (z \curlyvee_1 x)+ y \curlyvee_0 (z \succ_1 x + z \prec_1 x + z \curlyvee_1 x)-(y \ast z + z \ast y + y \curlyvee_0 z) \curlyvee_1 x - (y \curlyvee_0 z) \prec_1 x  \\
            & \quad = (y \prec_1 z + y \succ_1 z + y \curlyvee_1 z) \curlyvee_0 x + x \ast (y \curlyvee_1 z) - y \curlyvee_1(z \ast x + x \ast z + z \curlyvee_0 x) - y \succ_1 (z \curlyvee_0 x),\\
            & (y \succ_1 x + y \prec_1 x + y \curlyvee_1 x) \curlyvee_0 z+ z \ast (y \curlyvee_1 x) - y \ast (x \curlyvee_1 z)-y \curlyvee_0 (x \succ_1 z + x \prec_1 z + x \curlyvee_1 z) \\
            & \quad = y \curlyvee_1 (x \ast z + z \ast x + x \curlyvee_0 z)+ y \succ_1 (x \curlyvee_0 z)- (y \ast x + x \ast y + y \curlyvee_0 x) \curlyvee_1 z - (y \curlyvee_0 x) \prec_1 z.
        \end{align*}
        By adding these identities, we obtain
        \begin{align*}
            &~x \curlyvee_1 (y \ast z + z \ast y + y \curlyvee_0 z) - (y \ast z + z \ast y + y \curlyvee_0 z) \curlyvee_1 x + x \succ_1 (y \curlyvee_0 z) - (y \curlyvee_0 z) \prec_1 x \\
            & ~~ + z \ast (y \curlyvee_1 x) ~ {- z \ast ( x \curlyvee_1 y)} - y \ast (x \curlyvee_1 z) + y \ast (z \curlyvee_1 x) - (x \succ_1 y + x \prec_1 y + x\curlyvee_1 y) \curlyvee_0 z\\
            & ~~ +(y\succ_1 x+ y \prec_1 x + y \curlyvee_1 x)\curlyvee_0 z-y \curlyvee_0 ( x \succ_1 z + x \prec_1 z + x \curlyvee_1 z) + y \curlyvee_0 ( z \succ_1 x + z \prec_1 x + z \curlyvee_1 x)\\& \qquad =0,
            \end{align*}
            i.e. $x\smallblackdiamond (y\odot z)+ x\diamond (y\curlyvee_0 z)- z\ast (x\smallblackdiamond y) - y\ast (x\smallblackdiamond z)- \dcb{x}{y}\curlyvee_0 z - y\curlyvee_0 \dcb{x}{z}=0.$ In other words, the identity
        (\ref{nsp3}) also holds in $A$. This completes the proof.
\end{proof}

The NS-Poisson algebra $(A, \ast, \curlyvee_0, \diamond, \smallblackdiamond)$ constructed in the above theorem is called the {\bf semi-classical limit} of the NS-algebra deformation $(A [ \! [ t ] \! ], \prec_t, \succ_t, \curlyvee_t)$ of the NS-commutative algebra $(A, \ast, \curlyvee_0)$.

\medskip
 In the following, we consider filtrations of an NS-algebra and show that some particular classes of filtrations produce NS-Poisson algebras. Let $(A, \prec, \succ, \curlyvee)$ be an NS-algebra. A {\bf filtration} of the NS-algebra $A$ is an increasing sequence of subspaces $ A_0 \subseteq A_1 \subseteq A_2  \subseteq \cdots ~$ such that 
    $$ A = \bigcup\limits_{n=0}^{\infty} A_n  \quad \text{ and } \quad  (A_n \succ A_m + A_n \prec A_m ) \subseteq A_{n+m}, \quad (A_n \curlyvee A_m ) \subseteq A_{n+m}. $$
    A filtration $ A_0 \subseteq A_1 \subseteq A_2 \subseteq \cdots ~$ is said to be an {\bf NS-Lie filtration} if it additionally satisfies
        $$ x \succ y - y \prec x \in A_{n+m+1} ~ \text{ and } ~ x \curlyvee y \in A_{n+m+1}, \text{ for } x \in A_{n+1}, y \in A_{m+1}.$$
        
    \begin{thm}\label{thm-filt}
        Let  $(A, \prec, \succ, \curlyvee)$ be an NS-algebra and $ A_0 \subseteq A_1 \subseteq A_2 \subseteq \cdots ~$ be an NS-Lie filtration of it. Then the graded space 
        $$ Gr(A):=\bigoplus_{n=0}^{\infty} \frac{A_{n+1}}{A_n}$$
        inherits an NS-Poisson algebra structure with the operations
        \begin{align*}
            (x + A_n ) &\ast ( y + A_m) = x \succ y + A_{n+m+1},\\
            (x + A_n ) &\curlyvee ( y + A_m) = x \curlyvee y + A_{n+m+1},\\
            (x + A_n ) &\diamond ( y + A_m) = x \succ y - y \prec x + A_{n+m},\\
            (x + A_n ) &\smallblackdiamond ( y + A_m) = x \curlyvee y - y \curlyvee x + A_{n+m},
        \end{align*}
        for $x \in A_{n+1}$ and $y \in A_{m+1}$.
    \end{thm}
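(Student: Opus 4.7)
The argument parallels the proof of Theorem \ref{thm-defor}: the NS-Lie filtration plays the role of the deformation parameter and extracting the leading-order class in $Gr(A)$ plays the role of taking the coefficient of $t$ in the semi-classical limit. I would organize the verification in three stages.

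First, I would check that each of $\ast, \curlyvee, \diamond, \smallblackdiamond$ is well-defined on $Gr(A)$. For $\ast$ and $\curlyvee$ this is immediate from the basic filtration: if $x \in A_n$ or $y \in A_m$, then $x \succ y$ and $x \curlyvee y$ already lie in $A_{n+m+1}$, so the quotient class vanishes. For $\diamond$ and $\smallblackdiamond$ the extra NS-Lie drop is essential: viewing $x \in A_n$ as $x \in A_{(n-1)+1}$ (with the convention $A_{-1} = 0$), the NS-Lie condition puts $x \succ y - y \prec x$ and $x \curlyvee y - y \curlyvee x$ in $A_{n+m}$. The symmetry of $\curlyvee$ and skew-symmetry of $\smallblackdiamond$ on $Gr(A)$ are then direct from the definitions.

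Second, I would show that $(Gr(A), \ast, \curlyvee)$ is an NS-commutative algebra and $(Gr(A), \diamond, \smallblackdiamond)$ is an NS-Lie algebra. Each of the axioms (\ref{nsalg1})-(\ref{nsalg4}) of the ambient NS-algebra $A$, applied to representatives $x \in A_{n+1}, y \in A_{m+1}, z \in A_{p+1}$ and reduced modulo the appropriate filtration level ($A_{n+m+p+1}$ for the NS-commutative axioms, $A_{n+m+p}$ for the NS-Lie axioms), yields the corresponding identity on $Gr(A)$.

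Third comes the verification of the three compatibility identities (\ref{nsp1})-(\ref{nsp3}). For (\ref{nsp1}), axiom (\ref{nsalg3}) rewrites $x \succ (y \succ z)$ as $(x \odot y) \succ z$, while axiom (\ref{nsalg2}) identifies $(y \succ z) \prec x$ with $y \succ (z \prec x)$; both sides then collapse to the class of $(x \odot y - y \odot x) \succ z$ modulo $A_{n+m+p+1}$. An analogous manipulation, exploiting (\ref{nsalg3}) and (\ref{nsalg2}) symmetrically in $x$ and $y$, yields (\ref{nsp2}). The main obstacle is (\ref{nsp3}), which involves all four operations and six terms on each side; here I would combine axiom (\ref{nsalg4}) with (\ref{nsalg2}) and (\ref{nsalg3}), using crucially that the NS-Lie condition forces auxiliary products such as $(x \curlyvee y) \succ z$ and $(x \succ y) \curlyvee z$ into high enough filtration levels to be absorbed into $A_{n+m+p+1}$. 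The overall proof is a careful but essentially routine bookkeeping exercise, and the main difficulty is tracking precisely which NS-algebra terms vanish modulo $A_{n+m+p+1}$ and which contribute non-trivially to the leading class, exactly as in the coefficient-of-$t^2$ extraction carried out in the proof of Theorem \ref{thm-defor}.
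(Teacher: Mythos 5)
Your plan follows the same route as the paper's proof: establish the (skew-)symmetries and the NS-commutative and NS-Lie structures on $Gr(A)$ from the filtration conditions, then verify (\ref{nsp1})--(\ref{nsp3}) by expanding everything in $\prec, \succ, \curlyvee$ and regrouping into instances of the NS-algebra axioms of $A$, in the spirit of the coefficient extraction in Theorem \ref{thm-defor}. Two corrections are needed, one cosmetic and one substantive. Cosmetic: for $x \in A_{n+1}$, $y \in A_{m+1}$, $z \in A_{p+1}$ the NS-commutative identities are equalities modulo $A_{n+m+p+2}$ (each of $\ast, \curlyvee$ raises the filtration index by one), while $A_{n+m+p+1}$ is the correct modulus for the three compatibility identities and $A_{n+m+p}$ for the NS-Lie identities. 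Substantive: your proposed mechanism for (\ref{nsp3}) --- that the NS-Lie filtration pushes products such as $(x \curlyvee y) \succ z$ and $(x \succ y) \curlyvee z$ into $A_{n+m+p+1}$ so that they can be absorbed --- is false, and the verification would fail if you relied on it. Even after one application of the NS-Lie drop such products only land in $A_{n+m+p+2}$, and their classes modulo $A_{n+m+p+1}$ are generically nonzero; indeed they are precisely what (\ref{nsp3}) is about, since for instance $\dcb{x}{y} \curlyvee z$ and $z \ast (x \smallblackdiamond y)$ contribute the surviving terms $(x \curlyvee y - y \curlyvee x) \curlyvee z$ and $(x \curlyvee y - y \curlyvee x) \prec z$. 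What the NS-Lie filtration actually buys you is the right to replace $u \prec v$ by $v \succ u$ and $u \curlyvee v$ by $v \curlyvee u$ modulo one higher filtration step; after these substitutions the six terms of (\ref{nsp3}) regroup into three exact instances of axiom (\ref{nsalg4}), applied to the triples $(x,y,z)$, $(y,x,z)$ and $(y,z,x)$, with no term discarded. With that repair the rest of your outline goes through as in the paper.
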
 
    \begin{proof}

    Let $x + A_n,~ y + A_m,~ z + A_l \in Gr(A).$ According to our assumption, we have $x \succ y - y \prec x \in A_{n+m+1}$. In other words, $x \succ y + A_{n+m+1}= y \prec x + A_{n+m+1}$ which implies $(x+A_n)\succ (y + A_m)= (y + A_m) \prec (x + A_n)$. Moreover, $x \curlyvee y - y \curlyvee x \in A_{n +m+1}$ implies that $(x + A_n) \curlyvee (y + A_m)=(y + A_m) \curlyvee (x + A_n)$. Thus, the associated graded NS-algebra happens to be commutative.
    
    From the definition, it is clear that $\smallblackdiamond$ is a skewsymmetric operation on $Gr(A)$. Thus, with the operations $\diamond$ and $\smallblackdiamond$, the space $Gr(A)$ is an NS-Lie algebra \cite[Proposition 6.5]{das}.

    We now prove the compatibility conditions of NS-Poisson algebra. First, observe that
    \begin{align*}
        &\dcb{x+A_n}{y+A_m} \ast (z+A_l) - (x+A_n) \diamond \big((y+A_m)\ast (z+A_l) \big) + (y+A_m) \ast \big( (x+A_n) \diamond (z+A_l) \big)\\
        &=(x \succ y - y \prec x - y \succ x + x \prec y + x \curlyvee y _ y \curlyvee x) \succ z - x \succ (y \succ z ) + (y \succ z ) \prec x + y \succ (x \succ z)\\
        & \quad - y\succ (z \prec x) + A_{n+m+l+1}\\
        &= \big((x \odot y) \succ z - x \succ (y \succ z)\big ) - \big( (y \odot x) \succ z - y \succ (x \succ z)\big ) + \big( (y \succ z) \prec x - y \succ (z \prec x) \big) + A_{n+m+l+1},
    \end{align*}
    where $x \odot y$ is defined as in Definition \ref{nsalg}. Since $(A, \prec , \succ , \curlyvee)$ is an NS-algebra, the identity (\ref{nsp1}) holds in $Gr(A)$.
Next, we see that
    \begin{align*}
        &\big((x+ A_n) \odot (y + A_m) \big) \diamond (z + A_l) - (x+A_n) \ast \big((y+A_m)\diamond (z+A_l) \big) \\
        & \qquad \qquad - (y+A_m) \ast \big((x+A_n) \diamond (z+A_l) \big)\\
        &= (x \succ y + x \prec y + x \curlyvee y) \succ z - z \prec (x \succ y + x \prec y + x \curlyvee y) - x \succ (y \succ z) + x \succ (z \prec y)
        - (x \succ z) \prec y\\ 
        & \quad + (z \prec x) \prec y + A_{n+m+l+1}\\
        &=\big((x\odot y) \succ z - x \succ (y \succ z) \big)- \big(z \prec (x \odot y)+ (z \prec x)\prec y\big ) + \big(x \succ (z \prec y) - (x \succ z) \prec y \big) + A_{n+m+l+1}\\
        &= A_{n+m+l+1}.
    \end{align*}
    This proves the identity (\ref{nsp2}). Finally, we have
    \begin{align*}
        &\dcb{x+A_n}{y+A_m} \curlyvee (z+A_l) + (z+A_l) \ast \big((x+A_n)\smallblackdiamond (y+A_m)\big) + (y+A_m) \curlyvee \dcb{x+A_n}{z+A_l}\\
        & ~~ + (y + A_m)\ast \big((x+A_n)\smallblackdiamond (z+A_l)\big) - (x+A_n)\smallblackdiamond \big((y+A_m)\odot (z+A_l) \big) \\
        & ~~ - (x+A_n)\diamond \big( (y+A_m)\curlyvee (z+A_l) \big)\\
        &= (x \succ y - y \prec x - y \succ x + x \prec y + x \curlyvee y - y \curlyvee x) \curlyvee z + (x \curlyvee y - y \curlyvee x) \prec z \\
        & ~~ + y \curlyvee (x \succ z - z \prec x - z \succ x + x\prec z + x \curlyvee z - z \curlyvee x) + y \succ (x \curlyvee z - z \curlyvee x)\\
        & ~~ - x \curlyvee (y \succ z + y \prec z + y \curlyvee z) + (y \succ z + y \prec z + y \curlyvee z)\curlyvee x - x \succ (y \curlyvee z) + (y \curlyvee z) \prec x + A_{n+m+l+1}\\
        &= \big( (x \odot y) \curlyvee z + (x \curlyvee y) \prec z - x \curlyvee (y \odot z) - x \succ (y \curlyvee z) \big) - \big( (y \odot x) \curlyvee z + (y \curlyvee x) \prec z - y \curlyvee (x \odot z) \\
        & ~~ - y \succ (x \curlyvee z)\big)
        + \big( (y \odot z) \curlyvee x + (y \curlyvee z) \prec x - y \curlyvee (z \odot x)- y \succ (z \curlyvee x)\big) +A_{n+m+l+1}\\
        &=A_{n+m+l+1}.
    \end{align*}
    Therefore, (\ref{nsp3}) also holds in $Gr(A)$. This proves that $(Gr (A), \ast, \curlyvee, \diamond, \smallblackdiamond)$ is an NS-Poisson algebra.
\end{proof}

\section{NS-{\em F}-manifold algebras} \label{section5}

In this section, we introduce NS-$F$-manifold algebras as a generalization of NS-Poisson algebras, $F$-manifold algebras and pre-$F$-manifold algebras. Any NS-$F$-manifold algebra has a sub-adjacent $F$-manifold algebra. We show that Nijenhuis operators and Reynolds operators on $F$-manifold algebras give rise to NS-$F$-manifold algebras. Finally, we consider the NS-pre-Lie algebra deformation of an NS-commutative algebra and observe that the corresponding semi-classical limit inherits an NS-$F$-manifold algebra structure.

    \begin{definition} 
        An \textbf{$F$-manifold algebra} is a triple $(A, \cdot, \{~,~\})$ in which $(A, \cdot)$ is a commutative associative algebra, $(A, \{~,~\})$ is a Lie algebra and the following Hertling-Manin relation holds:
        \begin{align}
            P_{x \cdot y}(z,w)= x \cdot P_y (z,w) + y \cdot P_x (z,w),
        \end{align}
        for all $x,y,z,w \in A$. Here $P_x (y,z)$ is defined by
        \begin{align*}
            P_x (y,z)= \{x, y\cdot z\}- \{x,y\}\cdot z- y \cdot \{x,z\}, \text{ for } x, y, z \in A.
            \end{align*}
    \end{definition}

    It follows that an $F$-manifold algebra for which $P_x (y, z) = 0$ (for all $x, y, z \in A$) is a Poisson algebra. The notion of $F$-manifold algebra was introduced by Dotsenko \cite{dotsenko} as the underlying algebraic structure of $F$-manifolds. It has been shown by Liu, Sheng and Bai \cite{liu-sheng-bai} that $F$-manifold algebras are the semi-classical limits of pre-Lie formal deformations of commutative pre-Lie algebras.

    Let $(A, \cdot, \{ ~, ~ \})$ be an $F$-manifold algebra. Recall that \cite{liu-sheng-bai} a representation of $(A, \cdot, \{ ~, ~ \})$ is a triple $(V, \mu, \rho)$ in which $(V, \mu)$ is a representation of the commutative associative algebra $(A, \cdot)$ and $(V, \rho)$ is a representation of the Lie algebra $(A, \{ ~, ~ \})$ satisfying additionally
\begin{align*}
    R_{\mu, \rho} (x \cdot y , z) =~& \mu_x \circ R_{\mu, \rho} (y, z) + \mu_y \circ R_{\mu, \rho} (x, z),\\
    \mu_{P_x (y, z)} =~& S_{\mu, \rho} (y, z) \circ \mu_x - \mu_x \circ S_{\mu, \rho} (y, z),
 \end{align*}
 for all $x, y, z \in A$, where $R_{\mu, \rho}, S_{\mu, \rho} : A \otimes A \rightarrow \mathrm{End} (V)$ are defined by
 \begin{align*}
     R_{\mu, \rho} (x, y) := \rho_x \circ \mu_y - \mu_y \circ \rho_x - \mu_{ \{ x, y \} } ~~~ \text{ and } ~~~ S_{\mu, \rho } (x, y) := \mu_x \circ \rho_y + \mu_y \circ \rho_x - \rho_{x \cdot y}.
 \end{align*}

    In \cite{liu-sheng-bai} the authors also introduced the notion of pre-$F$-manifold algebras as a generalization of pre-Poisson algebras. Moreover, they showed that Rota-Baxter operators on $F$-manifold algebras induce pre-$F$-manifold algebras. In the following, we will introduce NS-$F$-manifold algebras as a simultaneous generalization of NS-Poisson algebras, $F$-manifold algebras and pre-$F$-manifold algebras.

    \begin{definition} \label{nsfm}
        An \textbf{NS-$F$-manifold algebra} is a quintuple $(A, \ast, \curlyvee, \diamond, \smallblackdiamond),$ where $(A, \ast, \curlyvee)$ is an NS-commutative algebra and $(A, \diamond, \smallblackdiamond)$ is an NS-Lie algebra such that the following compatibility conditions hold:
        \begin{align*}
             F_1(x \odot y, z, w) =~& x \ast F_1(y,z,w) + y \ast F_1(x,z,w), \label{NSF1} \tag{NSF1}\\
             ( F_1(x,y,z)+ F_1(x,z,y) + F_2(y,z,x) + F_3(x,y,z))\ast w =~& F_2(y,z,x\ast w)- x \ast F_2(y,z,w), \label{NSF2} \tag{NSF2} \\
                 F_3(x \odot y, z,w) + F_2(z,w,y \curlyvee x)=~& x \ast F_3(y,z,w) + y \ast F_3(x,z,w) \label{NSF3} \tag{NSF3}
                \\ + x \curlyvee (F_1(y,z,w)~+~& F_1(y,w,z)+F_2(z,w,y)+F_3(y,z,w))
                \\ + y \curlyvee (F_1(x,z,w)~+~&F_1(x,w,z)+F_2(z,w,x)+F_3(x,z,w)), 
        \end{align*}
        for all $x, y, z,w \in A.$ Here $F_1, F_2, F_3 : \otimes^3A \rightarrow A$ are the maps defined by
        \begin{align*}
            & F_1(x,y,z)= x \diamond(y \ast z)- y \ast (x \diamond z) - \dcb{x}{y}\ast z,\\
            & F_2(x,y,z)= x \ast (y \diamond z) + y \ast (x \diamond z) - (x \odot y) \diamond z,\\
            & F_3(x,y,z)= x \smallblackdiamond (y \odot z) + x \diamond (y \curlyvee z) - z \ast (x \smallblackdiamond y) - y \ast (x \smallblackdiamond z) - y \curlyvee \dcb{x}{z} - \dcb{x}{y}\curlyvee z,
        \end{align*}
        where $x \odot y = x \ast y + y \ast x + x \curlyvee y$ and $\dcb{x}{y}= x \diamond y - y \diamond x + x \smallblackdiamond y.$
    \end{definition}

    \begin{remark}
Let $(A, \ast, \curlyvee, \diamond, \smallblackdiamond)$ be an NS-$F$-manifold algebra. 

(i) If $F_1 = F_2 = F_3  =0$ then it is simply an NS-Poisson algebra introduced in Definition \ref{nspois}. Thus, $F_1, F_2, F_3$ measures to what extent  $(A, \ast, \curlyvee, \diamond, \smallblackdiamond)$ become an NS-Poisson algebra.

(ii) If $\ast$ and $\diamond$ are trivial then it follows that $F_! = F_2  = 0$ and
\begin{align*}
    F_3 (x, y, z) = x \smallblackdiamond (y \curlyvee z) - y \curlyvee (x \smallblackdiamond z) - (x \smallblackdiamond y) \curlyvee z, \text{ for } x, y, z \in A.
\end{align*}
In this case, the identities (\ref{NSF1}), (\ref{NSF2}) hold trivially and the identity (\ref{NSF3}) implies that
\begin{align*}
    F_3 (x \curlyvee y, z, w) = x \curlyvee F_3 (y, z, w) + y \curlyvee F_3 (x, z, w), \text{ for } x, y, z, w \in A.
\end{align*}
Thus, $(A, \curlyvee, \smallblackdiamond = \{ ~, ~\})$ is an $F$-manifold algebra.

(iii) If $\curlyvee$ and $\smallblackdiamond$ are trivial then it follows that $F_3  = 0$ and hence the identity (\ref{NSF3}) holds trivially. In this case, the triple $(A, \ast, \diamond)$ becomes a pre-$F$-manifold algebra in the sense of \cite{liu-sheng-bai}.
\end{remark}

     \begin{thm}\label{thm-subadj-nsf}
    Let $(A, \ast , \curlyvee, \diamond , \smallblackdiamond)$ be an NS-$F$-manifold algebra.
        \begin{itemize}
            \item[(i)] Then $(A, \odot , \dcb{~}{~})$ is an $F$-manifold algebra, called the sub-adjacent $F$-manifold algebra of $(A, \ast , \curlyvee, \diamond , \smallblackdiamond)$ and it is denoted by $A^c$. Here the operations $\odot$ and $\dcb{~}{~}$ are defined in the Definition \ref{nsfm}. 
            \item[(ii)]  Moreover, the triple $(A, \mu, \rho )$ is a representation of the sub-adjacent $F$-manifold algebra $A^c$, where $\mu$ and $\rho$ are defined by (\ref{mu}) and (\ref{rho}), respectively.
        \end{itemize}
    \end{thm}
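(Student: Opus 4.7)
The plan is to reduce both parts to algebraic identities that compare the ``$F$-manifold obstructions'' of the sub-adjacent structure to the three obstruction trilinear maps $F_1,F_2,F_3$ built into Definition \ref{nsfm}. The technical engine is an identification of $P_x(y,z)$ (computed for $\odot$ and $\dcb{~}{~}$) with a signed sum of $F_1,F_2,F_3$-terms; once this is in hand, (NSF1)--(NSF3) are precisely the input needed to verify the Hertling--Manin relation and the representation axioms.

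First, for part (i), note that $(A,\odot)$ is a commutative associative algebra and $(A,\dcb{~}{~})$ is a Lie algebra, since these come for free from the NS-commutative and NS-Lie substructures. So only the Hertling--Manin identity has to be verified. Mimicking the computation in the proof of Theorem \ref{prop-subadj-p}, I would expand
\[
\dcb{x}{y}\odot z+\dcb{x}{z}\odot y-\dcb{x}{y\odot z}
\]
by grouping terms exactly as in that earlier proof; in each of the four groups, where previously one invoked (\ref{nsp1})--(\ref{nsp3}) to obtain zero, one now picks up a residual term equal to one of $-F_1(x,y,z)$, $-F_1(x,z,y)$, $-F_2(y,z,x)$, or $-F_3(x,y,z)$. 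Collecting these and using commutativity of $\odot$, I expect to obtain
\[
P_x(y,z)\;=\;F_1(x,y,z)+F_1(x,z,y)+F_2(y,z,x)+F_3(x,y,z),
\]
which is exactly the bracketed expression on the left of (\ref{NSF2}).

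With this key formula in hand, the Hertling--Manin identity $P_{x\odot y}(z,w)=x\odot P_y(z,w)+y\odot P_x(z,w)$ follows by a direct expansion. Writing $P_{x\odot y}(z,w)$ as the four-term sum above, I would substitute: (NSF1) to rewrite the two $F_1$-terms, decompose $F_2(z,w,x\odot y)=F_2(z,w,x\ast y)+F_2(z,w,y\ast x)+F_2(z,w,x\curlyvee y)$ and apply (NSF2) to the first two summands, and apply (NSF3) to $F_3(x\odot y,z,w)$. The $F_2(z,w,x\curlyvee y)$ terms cancel against the $F_2(z,w,y\curlyvee x)$ in (NSF3), and regrouping the remaining pieces into $x\ast P_y(z,w)+P_y(z,w)\ast x+x\curlyvee P_y(z,w)$ plus the symmetric expression in $x\leftrightarrow y$ recovers $x\odot P_y(z,w)+y\odot P_x(z,w)$. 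This bookkeeping is the main obstacle: the argument is conceptually straightforward but the number of terms is considerable, and one must keep careful track of where the commutativity of $\odot$ and the symmetry $x\curlyvee y=y\curlyvee x$ are being used.

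For part (ii), the verification is essentially tautological once one recognises the obstruction maps. With $\mu_xy=x\ast y$ and $\rho_xy=x\diamond y$, a direct unpacking gives
\[
R_{\mu,\rho}(x,y)(z)=F_1(x,y,z),\qquad S_{\mu,\rho}(x,y)(z)=F_2(x,y,z).
\]
The first representation axiom $R_{\mu,\rho}(x\odot y,z)=\mu_x\circ R_{\mu,\rho}(y,z)+\mu_y\circ R_{\mu,\rho}(x,z)$ is then literally (NSF1) applied to an arbitrary fourth variable. For the second axiom, using the formula for $P_x(y,z)$ from part (i), the identity $\mu_{P_x(y,z)}=S_{\mu,\rho}(y,z)\circ\mu_x-\mu_x\circ S_{\mu,\rho}(y,z)$ evaluated on $w\in A$ reads $P_x(y,z)\ast w=F_2(y,z,x\ast w)-x\ast F_2(y,z,w)$, which is exactly (NSF2). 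Also, by Remarks \ref{repcom} and \ref{replie} already quoted in the proof of Theorem \ref{prop-subadj-p}, $(A,\mu)$ and $(A,\rho)$ are representations of $(A,\odot)$ and $(A,\dcb{~}{~})$ respectively, so all representation axioms for the sub-adjacent $F$-manifold algebra are verified.
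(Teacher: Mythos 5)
Your proposal is correct and follows essentially the same route as the paper: both hinge on establishing the key identity $P_x(y,z)=F_1(x,y,z)+F_1(x,z,y)+F_2(y,z,x)+F_3(x,y,z)$ by regrouping the expansion exactly as in Theorem \ref{prop-subadj-p}, then feeding (\ref{NSF1})--(\ref{NSF3}) into the expansion of $P_{x\odot y}(z,w)-x\odot P_y(z,w)-y\odot P_x(z,w)$, and for part (ii) identifying $F_1$ and $F_2$ with $R_{\mu,\rho}$ and $S_{\mu,\rho}$. The bookkeeping you defer is precisely the computation the paper carries out, and your sign conventions and cancellations (including $F_2(z,w,x\curlyvee y)$ against $F_2(z,w,y\curlyvee x)$ via commutativity of $\curlyvee$) match.
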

    \begin{proof}
       (i) Since $(A, \ast, \curlyvee)$ is an NS-commutative algebra, it follows that $(A, \odot)$ is a commutative associative algebra. On the other hand, $(A, \diamond , \smallblackdiamond)$ is an NS-Lie algebra implies that $(A, \dcb{~}{~})$ is a Lie algebra. Thus it remains to verify the Hertling-Manin relation for $(A, \odot, \dcb{~}{~})$. For any $x,y,z \in A$, we first observe that
        \begin{align*}
            P_x(y,z) =&~ \dcb{x}{y \odot z}- \dcb{x}{y} \odot z - y \odot \dcb{x}{z}\\
            =&~ x \diamond (y \ast z + z \ast y + y\curlyvee z) - (y \odot z)\diamond x + x \smallblackdiamond (y \odot z)\\
            &-\dcb{x}{y}\ast z - z \ast (x \diamond y - y \diamond x + x \smallblackdiamond y)- \dcb{x}{y} \curlyvee z\\
            &- y\ast (x \diamond z - z \diamond x + x \smallblackdiamond z)- \dcb{x}{z}\ast y - y \curlyvee \dcb{x}{z}\\
            =&~ \big(x \diamond (y \ast z)- y\ast (x \diamond z)- \dcb{x}{y} \ast z\big) + \big(x \diamond (z \ast y)- z \ast (x \diamond y)- \dcb{x}{z}\ast y\big) \\
            &+\big (y\ast (z \diamond x) + z\ast (y \diamond x)- (y\odot z)\diamond x\big) + \big(x \smallblackdiamond (y \odot z)+ x \diamond (y \curlyvee z) -z\ast (x \smallblackdiamond y)\\ &-y\ast (x \smallblackdiamond z)-y\curlyvee \dcb{x}{z}-\dcb{x}{y}\curlyvee z\big).
        \end{align*}
        Thus, we have
        \begin{align} \label{P}
         P_x(y,z)=  F_1(x,y,z)+F_1(x,z,y)+F_2(y,z,x)+F_3(x,y,z).
        \end{align}
        Hence
        \begin{align*}
            &P_{x\odot y}(z,w)-x \odot P_y(z,w)-y\odot P_x(z,w)\\
            &= F_1(x\odot y, z,w)+F_1(x\odot y,w,z)+F_2(z,w,x\ast y)+F_2(z,w,y\ast x)+F_2(z,w,y\curlyvee x)+F_3(x\odot y, z,w)\\
            & \quad -x\ast \big(F_1(y,z,w)+F_1(y,w,z)+F_2(z,w,y)+F_3(y,z,w)\big)-P_y(z,w)\ast x- x \curlyvee P_y(z,w)\\
            & \quad -y\ast \big(F_1(x,z,w)+F_1(x,w,z)+F_2(z,w,x)+F_3(x,z,w)\big)-P_x(z,w)\ast y - y\curlyvee P_x(z,w)\\
            &= \big(F_1(x\odot y, z,w)-x\ast F_1(y,z,w)-y\ast F_1(x,z,w)\big) +\big(F_1(x\odot y,w,z)-x\ast F_1(y,w,z)\\&  \quad - y\ast F_1(x,w,z)\big)
            -\big(P_y(z,w)\ast x-F_2(z,w,y\ast x)+y\ast F_2(z,w,x)\big)-\big(P_x(z,w)\ast y\\&  \quad  -F_2(z,w,x\ast y)+x\ast F_2(z,w,y)\big)+\big(F_3(x\odot y, z,w)+F_2(z,w,y\curlyvee x)-x\ast F_3(y,z,w)\\&  \quad  -y\ast F_3(x,z,w)- x \curlyvee P_y(z,w)-y\curlyvee P_x(z,w)\big),
        \end{align*}
        which vanishes by using (\ref{P}) along with the compatibility conditions of the NS-$F$-manifold algebra. Therefore, $(A, \odot , \dcb{~}{~})$ is an $F$-manifold algebra.

        (ii) We have already seen that  $(A,\mu)$ is a representation of the commutative associative algebra $(A, \odot)$, and $(A, \rho)$ is a representation of the Lie algebra $(A, \dcb{~}{~})$. Next, observe that 
        \begin{align*}
            F_1(x,y,z)=~&(\rho_x \circ \mu_y - \mu_y \circ \rho_x - \mu_{\dcb{x}{y}})(z)= R_{\mu, \rho }(x,y)(z),\\
            F_2(x,y,z)=~& (\mu_x \circ \rho_y + \mu_y \circ \rho_x - \rho_{x\odot y})(z)= S_{ \mu, \rho}(x,y)(z),
        \end{align*}
        for $x, y, z \in A$. Thus, by using (\ref{NSF1}) we get that
        \begin{align*}
            R_{\mu, \rho}(x\odot y,z)(w)=F_1(x\odot y,z,w)
            &=x\ast F_1(y,z,w)+y\ast F_1(x,z,w)\\
            &=(\mu_x \circ R_{\mu, \rho} (y,z)+\mu_y \circ R_{\mu, \rho}(x,z))(w)
        \end{align*}
        and by using (\ref{NSF2}) and (\ref{P}) we obtain
        \begin{align*}
            \mu_{P_x(y,z)}(w)= P_x(y,z) \ast w
            &= F_2(y,z,x\ast w)-x \ast F_2(y,z,w)\\
            &= (S_{\mu, \rho}(y,z) \circ \mu_x - \mu_x \circ S_{\mu ,\rho}(y,z))(w),
        \end{align*}
        for $x, y, z , w \in A$. This proves that $(A, \mu, \rho)$ is a representation of the sub-adjacent $F$-manifold algebra $A^c$.
    \end{proof}

    In the following, we consider Nijenhuis operators and Reynolds operators on an $F$-manifold algebra and show that they induce NS-$F$-manifold algebra structures. Let $(A, \cdot, \{ ~, ~\})$ be an $F$-manifold algebra. A {\bf Nijenhuis operator} on $(A, \cdot, \{ ~, ~\})$ is a linear map $N : A \rightarrow A$ that is a Nijenhuis operator on both the underlying commutative associative algebra and on the Lie algebra. Similarly, a Reynolds operator on $(A, \cdot, \{ ~, ~ \})$ is a linear map $R: A \rightarrow A$ that is a Reynolds operator on both the underlying commutative associative algebra and on the Lie algebra.

    \begin{proposition}\label{prop-nij-nsf}
        Let $(A, \cdot , \{~,~\})$ be an $F$-manifold algebra and $N:A \rightarrow A$ be a Nijenhuis operator on it. Then $(A, \ast , \curlyvee , \diamond , \smallblackdiamond)$ is an NS-$F$-manifold algebra, where
        $$ x \ast y = N(x)\cdot y, ~~~~ x \curlyvee y= -N(x \cdot y), ~~~~ x \diamond y= \{ N(x),y\} ~~~ \text{ and } ~~~ x \smallblackdiamond y= -N\{x,y\}, ~\text{ for } x, y \in A.$$
    \end{proposition}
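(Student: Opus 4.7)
The plan is to establish the NS-commutative and NS-Lie substructures by invoking Propositions \ref{nc} and \ref{nl}, and then to verify each of the three compatibility identities (\ref{NSF1}), (\ref{NSF2}), (\ref{NSF3}) by expressing $F_1, F_2, F_3$ in terms of the Hertling-Manin obstruction map of the original $F$-manifold algebra together with the Nijenhuis operator $N$.

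Since $N$ is in particular a Nijenhuis operator on the commutative associative algebra $(A, \cdot)$, Proposition \ref{nc} immediately gives an NS-commutative algebra $(A, \ast, \curlyvee)$ with subadjacent product $x \odot y = x \cdot_N y$ satisfying $N(x \odot y) = N(x) \cdot N(y)$; similarly Proposition \ref{nl} gives an NS-Lie algebra $(A, \diamond, \smallblackdiamond)$ with subadjacent bracket $\dcb{x}{y} = \{x, y\}_N$ satisfying $N(\dcb{x}{y}) = \{N(x), N(y)\}$. A direct substitution into the definitions of $F_1$ and $F_2$, using these two Nijenhuis identities, yields the clean expressions
\begin{align*}
F_1(x, y, z) = P_{N(x)}(N(y), z) \qquad \text{and} \qquad F_2(x, y, z) = P_z(N(x), N(y)),
\end{align*}
where $P_a(b, c) = \{a, b \cdot c\} - \{a, b\} \cdot c - b \cdot \{a, c\}$ denotes the Hertling-Manin obstruction of $(A, \cdot, \{~,~\})$.

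With these identifications, (\ref{NSF1}) is immediate from the Hertling-Manin relation applied to $N(x), N(y), N(z), w$, since $N(x \odot y) = N(x) \cdot N(y)$. For (\ref{NSF2}), the purely algebraic identity $P^N_x(y, z) = F_1(x, y, z) + F_1(x, z, y) + F_2(y, z, x) + F_3(x, y, z)$ established in the proof of Theorem \ref{thm-subadj-nsf} (which only uses the definitions of $\odot$ and $\dcb{~}{~}$, and so is available here) recasts the left-hand side as $P^N_x(y, z) \ast w$, where $P^N$ denotes the obstruction of the subadjacent structure $(A, \cdot_N, \{~,~\}_N)$. A term-by-term application of the Nijenhuis identities gives $N(P^N_x(y, z)) = P_{N(x)}(N(y), N(z))$, so the left-hand side becomes $P_{N(x)}(N(y), N(z)) \cdot w$, while the right-hand side reduces to the same expression via a single application of the Hertling-Manin relation to $N(x) \cdot w$ and commutativity of $\cdot$.

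The main obstacle is (\ref{NSF3}), which couples all three $F_i$ together and mixes the commutative and Lie structures. My plan is to again use the relation from Theorem \ref{thm-subadj-nsf} to rewrite $F_3(u, z, w) = P^N_u(z, w) - F_1(u, z, w) - F_1(u, w, z) - F_2(z, w, u)$, thereby expressing both sides of (\ref{NSF3}) purely in terms of $P$, $P^N$, and $N$. The terms $x \curlyvee P^N_y(z, w)$, $y \curlyvee P^N_x(z, w)$, and $F_2(z, w, y \curlyvee x)$ then convert, via $x \curlyvee u = -N(x \cdot u)$ together with $N(P^N_u(z, w)) = P_{N(u)}(N(z), N(w))$ and the commutative Nijenhuis identity $N(x) \cdot N(y) = N(x \cdot_N y)$, into expressions whose outer $N$'s can be resolved so that the resulting identity reduces to the Hertling-Manin relation for $(A, \cdot, \{~,~\})$ applied to the product $N(x) \cdot N(y)$. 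The bookkeeping in this last step is substantial, but no new ingredient beyond the Hertling-Manin relation and the two Nijenhuis conditions is required.
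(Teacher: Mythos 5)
Your proposal is correct in its architecture and, for the most part, coincides with the paper's own proof: both establish the NS-commutative and NS-Lie substructures via Propositions \ref{nc} and \ref{nl} and both rest on the identifications $F_1(x,y,z) = P_{N(x)}(N(y),z)$ and $F_2(x,y,z) = P_z(N(x),N(y))$, after which (\ref{NSF1}) is a one-line application of the Hertling--Manin relation. Where you genuinely diverge is in how you reach the central identity behind (\ref{NSF2}): the paper first computes $F_3(x,y,z) = -N\big(P_x(N(y),z)+P_x(y,N(z))+P_{N(x)}(y,z)+N(P_x(y,z))\big)$ explicitly and then verifies $P_{N(x)}(N(y),N(z)) = N\big(F_1(x,y,z)+F_1(x,z,y)+F_2(y,z,x)+F_3(x,y,z)\big)$ by a page-long expansion, whereas you invoke the purely formal relation (\ref{P}) from Theorem \ref{thm-subadj-nsf} (which indeed uses only the definitions of $\odot$, $\dcb{~}{~}$ and the $F_i$, so there is no circularity) together with $N(x\cdot_N y) = N(x)\cdot N(y)$ and $N\{x,y\}_N = \{N(x),N(y)\}$ to get $N(P^N_x(y,z)) = P_{N(x)}(N(y),N(z))$ in three lines. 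That is a real simplification and it makes the proof of (\ref{NSF2}) essentially immediate. The caveat is (\ref{NSF3}), which is where the paper spends the bulk of its effort: your claim that the ``outer $N$'s can be resolved'' is optimistic, since terms such as $x\curlyvee P^N_y(z,w) = -N\big(x\cdot P^N_y(z,w)\big)$ and $F_2(z,w,y\curlyvee x) = -P_{N(x\cdot y)}(N(z),N(w))$ do not reduce via the multiplicativity identities (note $N(x\cdot y) \neq N(x)\cdot N(y)$ in general), and in the paper's computation both sides of (\ref{NSF3}) retain doubly-nested expressions of the form $N\big(\cdots - N(\cdots)\big)$ that must simply be matched term by term after expanding every $P_{x\odot y}$ and $P_{N(x\cdot y)}$ with the Hertling--Manin relation. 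The paper's proof shows this matching does close using only the ingredients you name, so your plan is viable, but the decisive computation for (\ref{NSF3}) is asserted rather than carried out.
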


    \begin{proof}
        Since $N: A \rightarrow A$ is a Nijenhuis operator on the commutative associative algebra $(A, \cdot)$, it follows that $(A, \ast, \curlyvee)$ is an NS-commutative algebra. Similarly, $N$ is a Nijenhuis operator on the Lie algebra $(A, \{~,~\})$ implies that the triple $(A, \diamond , \smallblackdiamond)$ is an NS-Lie algebra. Next, we observe that
        \begin{align*}
            F_1(x,y,z)
            =~& \{N(x), N(y)\cdot z\}-N(y)\cdot \{N(x),z\}-N\dcb{x}{y} \cdot z \\
            =~& \{N(x), N(y)\cdot z\}-N(y)\cdot \{N(x),z\} - \{N(x),N(y)\}\cdot z\\
            =~& P_{N(x)} (N(y),z) = P_{N(x)} (z, N(y)),\\
            F_2(x,y,z)
            =~& N(x)\cdot \{N(y) , z\}+ N(y)\cdot \{N(x),z\}-\{N(x)\cdot N(y), z \}\\
            =~& P_z(N(x),N(y)) = P_z(N(y), N(x)),\\
            F_3(x,y,z) 
            =~& -N\{x, y \odot z\}-\{N(x), N(y\cdot z)\}+N(z)\cdot N\{x,y\} + N(y)\cdot N \{x,z\}\\ &+ N(y\cdot \dcb{x}{z})+N(\dcb{x}{y}\cdot z)\\
            =~& N \big( -\{x, y\odot z\} + y\odot \{x,z\}+  z\odot \{x,y\} -\dcb{x}{y\cdot z} + y\cdot \dcb{x}{z}+ \dcb{x}{y}\cdot z \big)\\
            =~& N \big( -\{x, N(y)\cdot z\}-\{x, y \cdot N(z)\}+ \{x, N(y\cdot z)\} + N(y) \cdot \{x,z\} + y \cdot N \{x,z\}\\ &- N(y\cdot \{x,z\}) + z\cdot N\{x,y\} + N(z) \cdot \{x,y\} - N(z \cdot \{x,y\})-\{N(x), y\cdot z\}\\ &- \{x, N(y\cdot z)\} + N \{x,y\cdot z\} + y\cdot \{N(x),z\}- y\cdot \{N(z),x\}- y\cdot N\{x,z\}\\
            &+ \{N(x),y\}\cdot z - \{N(y),x\} \cdot z- z\cdot N\{x,y\}  \big)\\
            =~& N \big( -\{x, N(y)\cdot z\} + N(y)\cdot \{x,z\}- \{N(y),x\}\cdot z-\{x, y \cdot N(z)\}+ N(z) \cdot \{x,y\}\\ &-y\cdot \{N(z),x\}-\{N(x), y\cdot z\}+ y\cdot \{N(x),z\}+ \{N(x),y\}\cdot z +N(\{x,y\cdot z\}\\&-y\cdot\{x,z\}-z\cdot \{x,y\})  \big)\\
            =~& -N\big( P_x(N(y),z) + P_x(y,N(z))+P_{N(x)}(y,z)+ N(P_x(y,z))\big )
        \end{align*}
and
        \begin{align*}
            &P_{N(x)}(N(y),N(z))  \\
            &= \{N(x),N(y)\cdot N(z)\}- N(y)\cdot \{N(x), N(z)\} -\{N(x),N(y)\} \cdot N(z)\\
            &= \big\{N(x), N\big(y\cdot N(z)+N(y)\cdot z - N (y\cdot z)\big)\big\} - N(y)\cdot N \big( \{x,N(z)\}+\{N(x),z\} 
            -N\{x,z\} \big) \\ & \quad -N(z)\cdot N \big(\{x,N(y)\}+\{N(x),y\}-N\{x,y\} \big) \\
            &= N\Big(  \{x, N(y)\cdot N(z)\}+\big\{N(x), y\cdot N(z)+N(y)\cdot z-N(y\cdot z)\big\}-N\big\{x, y\cdot N(z) + N(y)\cdot z-N(y\cdot z)\big\} \\
             & \quad -y\cdot \{N(x),N(z)\}-N(y)\cdot \{x,N(z)\}-N(y)\cdot \{N(x),z\} +N(y)\cdot N\{x,z\} \\
             & \quad + N\big(y\cdot \{x,N(z)\}+y\cdot \{N(x),z\}-y\cdot N\{x,z\}\big) 
              -z \cdot \{N(x),N(y)\} - N(z)\cdot \{x,N(y)\}\\ & \quad -N(z)\cdot \{N(x),y\} +N(z)\cdot N\{x,y\}
             +N\big(z\cdot \{x,N(y)\}+z\cdot \{N(x),y\}-z\cdot N\{x,y\}\big)  \Big) \\
             &= N\Big(\{x,N(y)\cdot N(z)\}-N(y)\cdot \{x,N(z)\}- N(z)\cdot \{x,N(y)\}+\{N(x), y\cdot N(z)\} - y\cdot \{N(x),N(z)\} \\
            & \quad -N(z)\cdot \{N(x),y\}+\{N(x),N(y)\cdot z\}-z\cdot \{N(x),N(y)\} - N(y)\cdot \{N(x),z\} -\{N(x), N(y\cdot z)\} \\
            & \quad + N(y)\cdot N\{x,z\}+N(z)\cdot N\{x,y\} - N\big(\{x,y\cdot N(z)\}+\{x,N(y)\cdot z\}-\{x,N(y\cdot z)\} \\
            & \quad -y\cdot \{x,N(z)\}-y\cdot \{N(x),z\}  +y\cdot N\{x,z\}-z\cdot\{x,N(y)\}-z\cdot \{N(x),y\}+z\cdot N\{x,y\}    \big)    \Big)
        \end{align*}
           \begin{align*}  
            &= N \Big( P_x(N(y),N(z))+P_{N(x)}(y,N(z))+P_{N(x)}(N(y),z)- N \big( \{x,N(y\cdot z)\} +\{N(x),y\cdot z\} \\
            & \quad -N\{x,y\cdot z\} -y\cdot N\{x,z\}-N(y)\cdot \{x,z\}+N(y\cdot\{x,z\}) -z\cdot N\{x,y\}-N(z)\cdot \{x,y\}\\
            & \quad +N(z\cdot \{x,y\})+\{x,y\cdot N(z)\}+\{x, N(y)\cdot z \} -\{x, N(y\cdot z)\}\\& 
            \quad -y\cdot \{x,N(z)\}-y\cdot \{N(x),z\}+ y\cdot N\{x,z\} - z\cdot \{x,N(y)\} -z\cdot \{N(x),y\}+ z\cdot N\{x,y\}    \big)  \Big)\\
            &= N \Big( P_x(N(y),N(z))+P_{N(x)}(y,N(z))+P_{N(x)}(N(y),z)- N\big( P_x(N(y),z)  +P_x(y,N(z)) \\ & \quad+P_{N(x)}(y,z)-N(P_x(y,z))\big) \Big)\\
            &= N(F_1(x,y,z)+F_1(x,z,y)+F_2(y,z,x)+F_3(x,y,z)).
        \end{align*}
Using these observations, we will now prove the identities (\ref{NSF1}), (\ref{NSF2}) and (\ref{NSF3}). First, for any $x,y,z,w \in A$, we have
        \begin{align*}
            F_1(x \odot y, z, w) = P_{N(x\odot y)}(N(z),w)
            &= P_{N(x)\cdot N(y)}(N(z),w)\\
            &= N(x) \cdot P_{N(y)}(N(z),w) + N(y)\cdot P_{N(x)}(N(z),w)\\
            &= x\ast F_1(y,z,w) + y \ast F_1(x,z,w),
        \end{align*}
        which verifies the identity (\ref{NSF1}). On the other hand,
        \begin{align*}
            &\big(F_1(x,y,z)+F_1(x,z,y)+F_2(y,z,x)+F_3(x,y,z)\big)\ast w\\
            &= N \big( F_1(x,y,z)+F_1(x,z,y)+F_2(y,z,x)+F_3(x,y,z) \big) \cdot w\\
            &= P_{N(x)}(N(y),N(z))\cdot w\\
            &= P_{N(x)\cdot w}(N(y),N(z))-N(x)\cdot P_w(N(y),N(z))\\
            &= P_{x\ast w}(N(y),N(z))-x\ast P_{w}(N(y),N(z))\\
            &= F_2(y,z,x\ast w)-x\ast F_2(y,z,w),
        \end{align*}
        which proves (\ref{NSF2}). Finally, to prove the identity (\ref{NSF3}), we first observe that
        \begin{align*}
            P_{x\odot y}(z,w)
            &= P_{N(x)\cdot y+x\cdot N(y)-N(x\cdot y)}(z,w)\\
            &= N(x)\cdot P_y(z,w)+ y\cdot P_{N(x)}(z,w)+x\cdot P_{N(y)}(z,w)+N(y)\cdot P_x(z,w)-P_{N(x\cdot y)}(z,w).
        \end{align*}
        Using this, we can calculate $F_3(x\odot y, z,w)$ as
        \begin{align}\label{F3xyzw}
            &F_3(x\odot y, z,w)  \nonumber \\
            &= -N \big( P_{x\odot y}(N(z),w)+P_{x\odot y}(z,N(w))+P_{N(x)\cdot N(y)}(z,w)-N(P_{x\odot y}(z,w))  \big)  \nonumber  \\ 
            &= N \Big( N(x)\cdot P_y(N(z),w)+y\cdot P_{N(x)}(N(z),w)+x\cdot P_{N(y)}(N(z),w)+N(y)\cdot P_x(N(z),w)  \nonumber \\
            & \quad + N(x)\cdot P_y(z,N(w))+y\cdot P_{N(x)}(z, N(w))+x\cdot P_{N(y)}(z,N(w))+N(y)\cdot P_x(z,N(w))  \nonumber \\
            & \quad - P_{N(x\cdot y)}(N(z),w)-P_{N(x\cdot y)}(z,N(w))+N(x)\cdot P_{N(y)}(z,w)+N(y)\cdot P_{N(x)}(z,w)  \nonumber \\ 
            & \quad - N\big( N(x)\cdot P_y(z,w)+ y\cdot P_{N(x)}(z,w)+x\cdot P_{N(y)}(z,w)+N(y)\cdot P_x(z,w)-P_{N(x\cdot y)}(z,w)\big)  \Big)  \nonumber \\
            &= -N \Big( N(x)\cdot \big(P_y(N(z),w)+P_y(z,N(w))+P_{N(y)}(z,w)\big)+N(y)\cdot \big(P_x(N(z),w)+P_x(z,N(w))\\ 
            & \quad +P_{N(x)}(z,w)\big)+x\cdot \big(P_{N(y)}(N(z),w)+P_{N(y)}(z,N(w))\big)+y\cdot \big(P_{N(x)}(N(z),w)+P_{N(x)}(z,N(w))\big)  \nonumber \\
            & \quad - P_{N(x\cdot y)}(N(z),w)-P_{N(x\cdot y)}(z,N(w)) -N\big( N(x)\cdot P_y(z,w)+ y\cdot P_{N(x)}(z,w)+x\cdot P_{N(y)}(z,w)  \nonumber \\
            & \quad +N(y)\cdot P_x(z,w)-P_{N(x\cdot y)}(z,w)\big) \Big).  \nonumber 
        \end{align}
        We also have
        \begin{align}\label{F2zwyx}
            &F_2(z,w,y\curlyvee x) \nonumber \\
            &= P_{-N(y\cdot x)}(N(z),N(w))  \nonumber  \\
            &= -N \big( F_1(x\cdot y,z,w)+F_1(x\cdot y,w,z)+F_2(z,w,x\cdot y)+F_3(x\cdot y,z,w)  \big)  \nonumber  \\
            &= -N \Big( P_{N(x\cdot y)}(N(z),w)+P_{N(x\cdot y)}(N(w),z)+x\cdot P_y(N(z),N(w))+y\cdot P_x(N(z),N(w))\\
            & \quad -N\big( x\cdot (P_y(N(z),w)+P_y(z,N(w)))+y\cdot (P_x(N(z),w)+P_x(z,N(w)))+P_{N(x\cdot y)}(z,w)  \nonumber \\
            & \quad - N( x\cdot P_y(z,w)+y\cdot P_x(z,w) ) \big)  \Big).  \nonumber 
        \end{align}
        By adding the expressions (\ref{F3xyzw}) and (\ref{F2zwyx}), we get the LHS of (\ref{NSF3}) as
        \begin{align} \label{lhs}
            &F_3(x\odot y, z,w)+F_2(z,w,y\curlyvee x)  \nonumber \\
            &= -N \Big( N(x)\cdot \big(P_y(N(z),w)+P_y(z,N(w))+P_{N(y)}(z,w)\big)+N(y)\cdot \big(P_x(N(z),w)+P_x(z,N(w))  \\
            & \quad +P_{N(x)}(z,w)\big)+x\cdot \big(P_{N(y)}(N(z),w)+P_{N(y)}(z,N(w))+P_y(N(z),N(w))  \big)+ y\cdot \big(P_{N(x)}(N(z),w)  \nonumber \\
            & \quad +P_{N(x)}(z,N(w)) +P_x(N(z),N(w))  \big)- N\big(x\cdot (P_y(z,N(w)) 
            +P_y(N(z),w)+P_{N(y)}(z,w)  )  \nonumber  \\
            & \quad +y\cdot(P_x(N(z),w)+P_x(z,N(w))+P_{N(x)}(z,w) ) +N(x)\cdot P_y(z,w)+ N(y)\cdot P_x(z,w)  \nonumber \\
            & \quad  -N(x\cdot P_y(z,w)+y\cdot P_x(z,w) ) \big) \Big).  \nonumber 
        \end{align}
        Similarly, we calculate the following expressions:
        \begin{align*}
           &x\ast F_3(y,z,w)\\
            &= N(x)\cdot F_3(y,z,w)\\
            &= -N \Big( -x\cdot F_3(y,z,w)+N(x)\cdot \big(P_y(N(z),w)+P_y(z,N(w))+P_{N(y)}(z,w)  \big)-N(x)\cdot N(P_y(z,w))\\& ~~~ \quad - N \big( x\cdot (P_y(N(z),w)+P_y(z,N(w))+P_{N(y)}(z,w)) -x\cdot N(P_y(z,w))    \big)  \Big)\\
            &= -N \Big( -x\cdot F_3(y,z,w)+ N(x)\cdot \big(P_y(N(z),w)+P_y(z,N(w))+P_{N(y)}(z,w) \big)-N \big( N(x)\cdot P_y(z,w)\\ & \quad +x\cdot (P_y(N(z),w)+P_y(z,N(w))+P_{N(y)}(z,w))-N(x\cdot P_y(z,w)) \big) \Big),
        \end{align*}
        \begin{align*}
                &x\curlyvee (F_1(y,z,w)+F_1(y,w,z)+F_2(z,w,y)+F_3(y,z,w))\\
                &=-N \big( x\cdot(P_{N(y)}(N(z),w)+P_{N(y)}(N(w),z)+P_y(N(z),N(w))  )+x\cdot F_3(y,z,w) \big).
        \end{align*}
        With the above two expressions, we now calculate the RHS of (\ref{NSF3}) as
        \begin{align}\label{rhs}
                &x\ast F_3(y,z,w)+y\ast F_3(x,z,w)+ x\curlyvee (F_1(y,z,w)+F_1(y,w,z)+F_2(z,w,y)+F_3(y,z,w)) \nonumber \\ &+ y\curlyvee (F_1(x,z,w)+F_1(x,w,z)+F_2(z,w,x)+F_3(x,z,w))  \nonumber  \\
                &= -N \Big( N(x)\cdot \big(P_y(N(z),w)+P_y(z,N(w))+P_{N(y)}(z,w)\big)+N(y)\cdot \big(P_x(N(z),w)+P_x(z,N(w))   \\
                & \quad +P_{N(x)}(z,w)\big)+x\cdot \big(P_{N(y)}(N(z),w)+P_{N(y)}(z,N(w))+P_y(N(z),N(w))  \big)+ y\cdot \big(P_{N(x)}(N(z),w)  \nonumber  \\
                & \quad +P_{N(x)}(z,N(w)) +P_x(N(z),N(w))  \big)- N\big(x\cdot (P_y(z,N(w)) 
                +P_y(N(z),w)+P_{N(y)}(z,w))  \nonumber \\
            & \quad +y\cdot(P_x(N(z),W)+P_x(z,N(w))+P_{N(x)}(z,w) ) +N(x)\cdot P_y(z,w)+ N(y)\cdot P_x(z,w)  \nonumber \\
            &  \quad -N(x\cdot P_y(z,w)+y\cdot P_x(z,w) )    \big)  \Big).  \nonumber
        \end{align}
        Comparing the expressions in (\ref{lhs}) and (\ref{rhs}) shows that the identity (\ref{NSF3}) holds. This completes the proof.
\end{proof}

Let $(A, \cdot, \{ ~, ~\})$ be an $F$-manifold algebra and $N: A \rightarrow A$ be a Nijenhuis operator on it. Then it follows from Proposition \ref{prop-nij-nsf} and Theorem \ref{thm-subadj-nsf} that $(A, \cdot_N, \{ ~, ~\}_N)$ is also an $F$-manifold algebra, where
\begin{align*}
    x \cdot_N y := N(x) \cdot y + x \cdot N(y) - N(x \cdot y) ~~~~ \text{ and }  ~~~~ \{ x, y \}_N := \{ N(x), y \} + \{ x, N(y) \} - N \{ x, y \},
\end{align*}
for $x, y \in N$. The $F$-manifold algebra $(A, \cdot_N , \{ ~, ~\}_N)$ is called the {\em Nijenhuis deformation} of $(A, \cdot, \{ ~, ~\})$ by the Nijenhuis operator $N$. In similar to Proposition \ref{prop-hier}, here we can also show that $N^k : A \rightarrow A$ is a Nijenhuis operator on the $F$-manifold algebra $(A, \cdot, \{ ~, ~\})$, for any $k \geq 0$. Moreover, for any $k, l \geq 0$, the $F$-manifold algebras $(A, \cdot_{N^k} , \{ ~, ~ \}_{N^k})$ and   $(A, \cdot_{N^l} , \{ ~, ~ \}_{N^l})$ are compatible in the sense that $(A, \cdot_{N^k} + \cdot_{N^l}, \{ ~, ~ \}_{N^k} + \{ ~, ~ \}_{N^l})$ is also an $F$-manifold algebra.

\begin{exam}
Let $A$ be a $2$-dimensional vector space with basis $\{ e_1, e_2 \}$. We define a commutative associative multiplication on $A$ by
\begin{align*}
    e_1 \cdot e_1 = e_1 ~~~~  \text{ and } ~~~~ e_1 \cdot e_2 = e_2 \cdot e_1 = e_2.
\end{align*}
A Lie bracket $\{ ~, ~ \}$ on $A$ is defined by $\{ e_1, e _1 \} = \{ e_2, e_2 \} = 0$ and $\{ e_1, e_2 \} = ae_2 = - \{ e_2, e_1 \},$  for any constant $a \in {\bf k}$. It has been shown in \cite{liu-sheng-bai} that $(A, \cdot, \{ ~, ~ \})$ is an $F$-manifold algebra. By considering the identity map $\mathrm{Id} : A \rightarrow A$ as a Nijenhuis operator, we get that
$(A, \ast, \curlyvee, \diamond, \smallblackdiamond)$ is an NS-$F$-manifold algebra, where
\begin{align*}
    &e_i \ast e_j = e_i \cdot e_j, \quad e_i \curlyvee e_j = - e_i \cdot e_j, \text{ for } i, j =1,2, \\
   & e_1 \diamond e_1 = 0, \quad e_1 \diamond e_2 = a e_2, \quad  e_2 \diamond e_1 = - a e_2, \quad e_2 \diamond e_2 = 0,\\
   & e_1 \smallblackdiamond e_1 = 0, \quad e_1 \smallblackdiamond e_2 = - a e_2, \quad e_2 \smallblackdiamond e_1 =  a e_2, \quad  e_2 \smallblackdiamond e_2 = 0.
\end{align*}
More generally, the map $N : A \rightarrow A$ given by $N (e_1) = r e_1 + s e_2$ and $N (e_2 ) = r e_2$ is a Nijenhuis operator on the $F$-manifold algebra $(A, \cdot, \{ ~, ~ \})$, for any constants $r, s \in {\bf k}$. Hence by Proposition \ref{prop-nij-nsf}, $(A, \ast, \curlyvee, \diamond, \smallblackdiamond)$ is an NS-$F$-manifold algebra, where
\begin{align*}
    e_i \ast e_j = N (e_i) \cdot e_j, \quad e_i \curlyvee e_j = - N (e_i \cdot e_j), \quad e_i \diamond e_j = \{ N (e_i), e_j \},  \quad e_i \smallblackdiamond e_j = - N \{ e_i, e_j \}.
\end{align*}
\end{exam}

\begin{exam}
    Let $A$ be a $3$-dimensional vector space with basis $\{ e_1, e_2, e_3 \}$. Then $(A, \ast, \curlyvee, \diamond, \smallblackdiamond)$ is an NS-$F$-manifold algebra, where the bilinear operations $\ast, \curlyvee, \diamond, \smallblackdiamond$ are characterized by 
    \begin{align*}
       & e_2 \ast e_3 = e_3 \ast e_2 = e_1, \quad e_3 \ast e_3 = e_2, \\
       & e_2 \curlyvee e_3 = e_3 \curlyvee e_2 = - e_1, \quad e_3 \curlyvee e_3 = -e_2, \\
       & e_2 \diamond e_3 = a e_1, \quad e_3 \diamond e_2 = - a e_1, \\
       & e_2 \smallblackdiamond e_3 = - a e_1, \quad e_3 \smallblackdiamond e_2 = a e_1.
    \end{align*}
\end{exam}

\begin{exam}
    Let $(A, \cdot)$ be a commutative associative algebra and $D : A \rightarrow A$ be a derivation. Then it has been shown in \cite{liu-sheng-bai} that $(A, \cdot, \{ ~, ~ \})$ is an $F$-manifold algebra, where
    \begin{align*}
        \{ x, y \} := x \cdot D(y) - y \cdot D(x), \text{ for } x, y \in A.
    \end{align*}
    Suppose $N: A \rightarrow A$ is a Nijenhuis operator on the commutative associative algebra $(A, \cdot)$ that satisfies $N \circ D = D \circ N$. Then $N$ is also a Nijenhuis operator on the Lie algebra $(A, \{ ~, ~ \})$. To see this, we observe that
    \begin{align*}
        \{ N(x), N(y) \} =~& N(x) \cdot DN (y) - N(y) \cdot DN (x) \\
        =~& N(x) \cdot N D(y) - N(y) \cdot ND(x) \\
        =~& N \big(   N(x) \cdot D(y) + x \cdot ND (y) - N (x \cdot D(y))  \big)  \\
        ~& - N \big(   N(y) \cdot D(x) + y \cdot ND (x) - N (y \cdot D(x))  \big) \\
        =~& N \big(   \{ N (x), y \} + \{ x, N(y) \} - N \{ x, y \} \big).
    \end{align*}
    Thus, $N$ is a Nijenhuis operator on the $F$-manifold algebra $(A, \cdot, \{ ~, ~ \})$. Therefore, we obtain an NS-$F$-manifold algebra $(A, \ast, \curlyvee, \diamond, \smallblackdiamond)$, where
    \begin{align*}
        x \ast y =~& N(x) \cdot y, \qquad x \curlyvee y = - N (x \cdot y), \qquad x \diamond y = N (x) \cdot D(y) - y \cdot DN (x) \\
        &~~~~ \text{ and } \qquad x \smallblackdiamond y = - N (x \cdot D(y) - y \cdot D(x)), \text{ for } x, y \in A.
    \end{align*}
\end{exam}

\begin{exam}
Let $A = {\bf k}[x]$ be the polynomial algebra in one-variable and $D : A \rightarrow A$ be the derivation given by $D ( p(x) ) = p' (x)$ the derivative of $p(x)$. Consider the identity map $\mathrm{Id} : A \rightarrow A$ which is a Nijenhuis operator on $A$ satisfying $\mathrm{Id} \circ D = D \circ \mathrm{Id}$. Hence we obtain an NS-$F$-manifold algebra $(A,\ast, \curlyvee, \diamond, \smallblackdiamond )$ given by
\begin{align*}
    p (x) \ast q(x) =~& p(x) q(x), \quad p(x) \curlyvee q(x) = - p(x) q(x), \quad p(x) \diamond q(x) = p(x) q'(x) - q(x) p'(x) \\
    &\text{ and } \quad p (x) \smallblackdiamond q(x) = - (p(x) q'(x) - q(x) p'(x)), \text{ for } p(x), q(x) \in A.
\end{align*}
\end{exam}

    \begin{proposition}\label{prop-rey-nsf}
        Let $(A, \cdot , \{~,~\})$ be an $F$-manifold algebra and $R:A \rightarrow A$ be a Reynolds operator on it. Then $(A, \ast, \curlyvee, \diamond, \smallblackdiamond)$ is an NS-$F$-manifold algebra, where
        $$ x\ast y= R(x)\cdot y, ~~ x \curlyvee y = -R(x)\cdot R(y), ~~ x\diamond y= \{R(x),y\} \text{~~and~~} x\smallblackdiamond y= - \{R(x),R(y)\}, \text{ for } x, y \in A.$$
        Then $(A,\ast , \curlyvee, \diamond, \smallblackdiamond)$ is an NS-$F$-manifold algebra.
        \begin{proof}
            We have already seen in Proposition \ref{prop-rey-nsp} that $(A, \ast, \curlyvee)$ is an NS-commutative algebra and $(A, \diamond, \smallblackdiamond)$ is an NS-Lie algebra. By direct calculations, we can observe that
            \begin{align*}
                F_1(x,y,z)&=P_{R(x)}(R(y),z),\\
                F_2(x,y,z)&=P_z(R(x),R(y)),\\
                F_3(x,y,z)&=-2P_{R(x)}(R(y),R(z)),\\
                P_{R(x)}(R(y),R(z)) &=R \big( F_1(x,y,z)+F_1(x,z,y)+F_2(y,z,x)+F_3(x,y,z) \big),
            \end{align*}
            for $x, y, z \in A.$ Using these observations along with the definition of the Reynolds operator and $F$-manifold algebra, we obtain that
            \begin{align*}
                x\ast F_1(y,z,w)+y\ast F_1(x,z,w)&=R(x)\cdot P_{R(y)}(R(z),w)+R(y)\cdot P_{R(x)}(R(z),w)\\
                &= P_{R(x)\cdot R(y)}(R(z),w)\\
                &= P_{x \odot y}(R(z),w)\\
                &= F_1(x\odot y, z,w),
            \end{align*}
            \begin{align*}
                F_2(y,z, x\ast w)-x\ast F_2(y,z,w) &= 
                P_{R(x)\cdot w}(R(y), R(z))-R(x)\cdot P_w (R(y),R(z))\\
                &= P_{R(x)}(R(y), R(z))\cdot w\\
                &= (F_1(x,y,z)+F_1(x,z,y)+F_2(y,z,x)+F_3(x,y,z))\ast w
            \end{align*}
            and
            \begin{align*}
                 & F_3(x \odot y, z,w) + F_2(z,w,y \curlyvee x)- x \ast F_3(y,z,w) - y \ast F_3(x,z,w)
                \\&- x \curlyvee (F_1(y,z,w)+ F_1(y,w,z)+F_2(z,w,y)+F_3(y,z,w))
                \\&- y \curlyvee (F_1(x,z,w)+F_1(x,w,z)+F_2(z,w,x)+F_3(x,z,w))\\
                &= -2P_{R(x)\cdot R(y)}(R(z),R(w))+P_{-R(y)\cdot R(x)}(R(z),R(w))+2R(x)\cdot P_{R(y)}(R(z),R(w))\\
                & \quad +2R(y)\cdot P_{R(x)}(R(z),R(w))+R(x)\cdot P_{R(y)}(R(z),R(w))+ R(y)\cdot P_{R(x)}(R(z),R(w))\\
                &= 0,
            \end{align*}
            which verifies (\ref{NSF1}), (\ref{NSF2}) and (\ref{NSF3}), respectively. Hence the result follows.
        \end{proof}
    \end{proposition}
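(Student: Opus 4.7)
The plan is to reduce the three compatibility conditions (\ref{NSF1}), (\ref{NSF2}), and (\ref{NSF3}) to the Hertling-Manin relation of the given $F$-manifold algebra $(A, \cdot, \{~,~\})$. By Proposition \ref{prop-rey-nsp} we already know that $(A, \ast, \curlyvee)$ is an NS-commutative algebra and $(A, \diamond, \smallblackdiamond)$ is an NS-Lie algebra, so only the three compatibilities remain.

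The preliminary step is to evaluate $F_1$, $F_2$, and $F_3$ in closed form. The Reynolds identities (\ref{rey1}) and (\ref{rey2}) yield the consequences $R(x \odot y) = R(x) \cdot R(y)$ and $R\dcb{x}{y} = \{R(x), R(y)\}$; substituting these into the definitions one obtains
\begin{align*}
F_1(x, y, z) &= P_{R(x)}(R(y), z), \\
F_2(x, y, z) &= P_z(R(x), R(y)), \\
F_3(x, y, z) &= -2\, P_{R(x)}(R(y), R(z)).
\end{align*}
A further application of the Reynolds axioms gives the combined identity
\begin{align*}
R\bigl(F_1(x, y, z) + F_1(x, z, y) + F_2(y, z, x) + F_3(x, y, z)\bigr) = P_{R(x)}(R(y), R(z)).
\end{align*}

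Given these formulas, (\ref{NSF1}) reduces directly to the Hertling-Manin relation applied to the product $R(x) \cdot R(y)$: the left side equals $P_{R(x) \cdot R(y)}(R(z), w)$, while the right side is precisely the Hertling-Manin expansion of this expression. For (\ref{NSF2}), the right-hand side $F_2(y, z, x \ast w) - x \ast F_2(y, z, w)$ becomes $P_{R(x) \cdot w}(R(y), R(z)) - R(x) \cdot P_w(R(y), R(z))$, which by Hertling-Manin equals $w \cdot P_{R(x)}(R(y), R(z))$; this in turn matches the left side after invoking the combined identity. For (\ref{NSF3}), using $F_3 = -2P$ and the commutativity of $\cdot$, the left side collapses to $-3\, P_{R(x) \cdot R(y)}(R(z), R(w))$, while the right side simplifies (the two $\curlyvee$ terms being handled by the combined identity) to $-3\bigl(R(x) \cdot P_{R(y)}(R(z), R(w)) + R(y) \cdot P_{R(x)}(R(z), R(w))\bigr)$; a single application of Hertling-Manin closes the equality.

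The principal technical obstacle is the verification of the combined identity $R(F_1 + F_1 + F_2 + F_3) = P_{R(x)}(R(y), R(z))$, in which the Reynolds axioms for both $\cdot$ and $\{~,~\}$ must be invoked to absorb the factor of $R$ and then combined with the Hertling-Manin relation inside the argument of $R$. Once this is established, the three compatibilities follow by the routine substitutions indicated above.
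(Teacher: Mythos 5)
Your proposal is correct and follows essentially the same route as the paper's own proof: you compute the same closed forms $F_1(x,y,z)=P_{R(x)}(R(y),z)$, $F_2(x,y,z)=P_z(R(x),R(y))$, $F_3(x,y,z)=-2P_{R(x)}(R(y),R(z))$, establish the same combined identity $R(F_1(x,y,z)+F_1(x,z,y)+F_2(y,z,x)+F_3(x,y,z))=P_{R(x)}(R(y),R(z))$, and then verify (\ref{NSF1})--(\ref{NSF3}) by the same substitutions into the Hertling--Manin relation (your $-3P_{R(x)\cdot R(y)}(R(z),R(w))$ bookkeeping for (\ref{NSF3}) matches the paper's $-2-1$ split on each side). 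No gaps; the step you flag as the main obstacle is exactly the one the paper also leaves as a direct calculation.
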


\begin{remark}
    In future, we intend to consider the cohomology theory of $F$-manifold algebras. In particular, by using $2$-cocycles, we aim to define twisted Rota-Baxter operators and prove the analogs of Theorem \ref{thm-twisted-rota-ns} and Proposition \ref{prop-last} in the context of $F$-manifold algebras.
    \end{remark}

    Next, we consider NS-pre-Lie algebra deformations of NS-commutative algebras and show that NS-$F$-manifold algebras are the corresponding semi-classical limits. We start with the following definition from \cite{guo-zhang}.

    \begin{definition}  \label{nspl}
        An \textbf{NS-pre-Lie algebra} is a quadruple $(A, \rhd, \lhd, \circ)$ of vector space $A$ together with bilinear maps $\rhd, \lhd, \circ : A \times A \rightarrow A$ satisfying the following set of identities:
        \begin{align}
             (x\odot y)\rhd z- x \rhd(y\rhd z) =~& (y\odot x)\rhd z - y\rhd (x\rhd z) \label{nspl1},\\
             x \rhd (y \lhd z)-(x \rhd y)\lhd z =~& y\lhd (x\odot z)-(y \lhd x)\lhd z \label{nspl2},\\
             \begin{split}
                (x\odot y)\circ z-x\circ (y\odot z)+ ~& (x\circ y)\lhd z- x \rhd (y \circ z) \label{nspl3} \\ =~& (y\odot x)\circ z-y\circ (x\odot z) +(y\circ x)\lhd z- y \rhd (x \circ z),
                \end{split}
        \end{align}
        for all $x,y,z \in A$. Here $x\odot y= x\rhd y + x\lhd y + x \circ y$.
    \end{definition}
    \begin{remark}
        Any NS-algebra $(A, \prec, \succ , \curlyvee)$ can be regarded as an NS-pre-Lie algebra $(A,  \rhd, \lhd, \circ)$ by setting $x\rhd y= x \succ y, x \lhd y= x \prec y$ and $x\circ y= x \curlyvee y$. Thus NS-pre-Lie algebras generalize NS-algebras. Taking the product $\circ$ as trivial in any NS-pre-Lie algebra gives us an L-dendriform algebra \cite{bai-l}. So, NS-pre-Lie algebras are also a generalization of L-dendriform algebras. 
    \end{remark}

    It is well-known that a pre-Lie algebra gives rise to a Lie algebra by skew-symmetrization. The following result is a generalization for NS-pre-Lie algebras.

    \begin{proposition}\label{prop-nspre-nsl}
        Let $(A, \rhd, \lhd, \circ)$ be an NS-pre-Lie algebra. Then $(A, \diamond, \smallblackdiamond)$ is an NS-Lie algebra, where
        \begin{align*}
            x \diamond y := x \rhd y - y \lhd x ~~~ \text{ and } ~~~ x\smallblackdiamond y := x \circ y- y \circ x, \text{ for } x, y \in A.
        \end{align*}
    \end{proposition}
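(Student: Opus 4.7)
The plan is to verify directly the three requirements of Definition \ref{nslie}: skew-symmetry of $\smallblackdiamond$, identity (\ref{nsl1}), and identity (\ref{nsl2}). Skew-symmetry of $\smallblackdiamond$ is immediate from the definition $x\smallblackdiamond y = x\circ y - y\circ x$. A convenient preliminary observation is that the subadjacent bracket has the clean expression
\begin{align*}
\dcb{x}{y} = x\diamond y - y\diamond x + x\smallblackdiamond y = x\odot y - y\odot x,
\end{align*}
since all six terms reassemble into the totally skew combination of $\odot$. This identification will be the key bridge that lets us substitute the NS-pre-Lie axioms, which are phrased in terms of $\odot$, into the expansions produced by the NS-Lie axioms.

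For identity (\ref{nsl1}), I would expand each of the four terms on the left-hand side
$x\diamond(y\diamond z) - (x\diamond y)\diamond z - y\diamond(x\diamond z) + (y\diamond x)\diamond z$
in terms of $\rhd$ and $\lhd$, and sort the resulting sixteen monomials according to type: purely $\rhd$-nested, purely $\lhd$-nested, and the two kinds of $\rhd$-$\lhd$ mixed. The purely $\rhd$-nested terms $x\rhd(y\rhd z) - (x\rhd y)\rhd z - y\rhd(x\rhd z) + (y\rhd x)\rhd z$ are precisely the combination controlled by axiom (\ref{nspl1}), which rewrites them as $(x\odot y)\rhd z - (y\odot x)\rhd z - (x\rhd y)\rhd z + (y\rhd x)\rhd z = (x\lhd y + x\circ y)\rhd z - (y\lhd x + y\circ x)\rhd z$. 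The remaining mixed and $\lhd$-nested terms are exactly paired so that axiom (\ref{nspl2}) (applied with the pairs $(x,z,y)$ and $(y,z,x)$) converts $x\rhd(z\lhd y)$ and $y\rhd(z\lhd x)$ into forms which cancel everything except the contribution $-z\lhd(x\circ y) + z\lhd(y\circ x)$. Assembled together, the surviving terms collapse to $(x\smallblackdiamond y)\rhd z - z\lhd(x\smallblackdiamond y) = (x\smallblackdiamond y)\diamond z$, which is exactly the right-hand side of (\ref{nsl1}).

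For identity (\ref{nsl2}), I would proceed analogously but now organize the calculation around cyclic sums. Using $\dcb{y}{z} = y\odot z - z\odot y$, each of the three terms $x\smallblackdiamond\dcb{y}{z}$ produces four products of the form $(\text{variable})\circ(\text{variable}\odot\text{variable})$ or its reverse, and each of the three terms $x\diamond(y\smallblackdiamond z)$ produces four mixed products involving $\rhd$ or $\lhd$ acting on a $\circ$-combination. Grouping the total twenty-four monomials into four cyclic triples and applying axiom (\ref{nspl3}) to each triple reduces the entire cyclic sum to zero. The main obstacle here is purely organizational: the cyclic expansion yields a large collection of terms, and one must be disciplined in pairing them with the appropriate instances of (\ref{nspl3}) so that all contributions cancel in matched groups, after which the desired equality follows directly.
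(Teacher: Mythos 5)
Your proposal is correct and follows essentially the same route as the paper: skew-symmetry is immediate, identity (\ref{nsl1}) is verified by expanding in $\rhd,\lhd$ and absorbing the purely $\rhd$-nested terms via (\ref{nspl1}) and the mixed/$\lhd$-nested terms via the two instances of (\ref{nspl2}) with variables $(x,z,y)$ and $(y,z,x)$, and identity (\ref{nsl2}) follows by observing $\dcb{x}{y}=x\odot y-y\odot x$ and summing the three cyclic rotations of (\ref{nspl3}). The only cosmetic imprecision is the phrase ``four cyclic triples'' for the $24$ monomials, which are really three cyclic instances of the eight-term identity (\ref{nspl3}); this does not affect the argument.
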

    \begin{proof}
    For any $x,y,z \in A$, we have
        \begin{align*}
            x\diamond (y\diamond z) &= x \rhd (y \rhd z)- x \rhd (z \lhd y) - (y \rhd z)\lhd x + (z\lhd y)\lhd x,\\
            (x\diamond y)\diamond z &= (x \rhd y)\rhd z-(y \lhd x)\rhd z- z\lhd (x \rhd y)+ z\lhd (y \lhd x),\\
            (x\smallblackdiamond y)\diamond z &= (x\circ y)\rhd z-(y\circ x)\rhd z-z\lhd (x\circ y)+ z\lhd (y\circ x).
        \end{align*}
        Thus, we have
        \begin{align*}
            &x\diamond (y\diamond z)-(x\diamond y)\diamond z- y \diamond (x\diamond z)+ (y\diamond x)\diamond z-(x\smallblackdiamond y)\diamond z\\
            &= x \rhd (y \rhd z) - x \rhd (z \lhd y) - (y \rhd z)\lhd x + (z\lhd y)\lhd x - (x \rhd y)\rhd z + (y \lhd x)\rhd z + z\lhd (x \rhd y)\\& \quad - z\lhd (y \lhd x) - y \rhd (x \rhd z) + y \rhd (z \lhd x) + (x \rhd z)\lhd y - (z\lhd x)\lhd y + (y \rhd x)\rhd z - (x \lhd y)\rhd z\\& \quad - z\lhd (y \rhd x) + z\lhd (x \lhd x)-(x\circ y)\rhd z + (y\circ x)\rhd z + z\lhd (x\circ y) - z\lhd (y\circ x)\\
            &= \big( x \rhd (y \rhd z) - y \rhd (x \rhd z) -(x \rhd y+ x\lhd y + x \circ y)\rhd z + (y \rhd x + y \lhd x + y \circ x)\rhd z\big)\\
            & \quad + \big(z \lhd(x \rhd y+ x\lhd y + x \circ y)-(z\lhd x)\lhd y +(x \rhd z)\lhd y-x\rhd(z \lhd y)\big)\\
            & \quad + \big((z\lhd y)\lhd x+y\rhd(z\lhd x)-(y\rhd z)\lhd x-z \lhd (y \rhd x + y \lhd x + y \circ x)\big)\\
            &= 0.
        \end{align*}
        This proves the first identity of an NS-Lie algebra. Next, observe that
        \begin{align*}
            \dcb{x}{y}= x\diamond y-y\diamond x + x\smallblackdiamond y= x\odot y- y \odot x, 
        \end{align*}
        where $\odot$ is defined as in Definition \ref{nspl}. Note that the identity (\ref{nspl3}) can be written as
        \begin{align}\label{cyclic}
            -(x\odot y)\circ z ~+~ &x\circ (y\odot z)-(x\circ y)\lhd z+ x \rhd (y \circ z)+ (y\odot x)\circ z-y\circ (x\odot z) \\ & +(y\circ x)\lhd z- y \rhd (x \circ z)=0, \nonumber
            \end{align}
By considering the two more identities similar to (\ref{cyclic}) taking the variables $x, y, z$ in the cyclic order, and then adding all three identities, we get
                \begin{align*}
            &\big(x \circ ( y\odot z- z\odot y)-( y\odot z- z\odot y)\circ x\big) + \big(y \circ ( z\odot x- x\odot z)-( z\odot x- x\odot z)\circ y\big)\\&
            +\big(z \circ ( x\odot y- y\odot x)-( x\odot y- y\odot x)\circ z\big)+
            \big(x\rhd (y\circ z-z\circ y)-(y\circ z -z\circ y)\lhd x\big)\\
            &+\big(y\rhd (z\circ x-x\circ z)-(z\circ x -x\circ z)\lhd y\big)+
            \big(z\rhd (x\circ y-y\circ x)-(x\circ y -y\circ x)\lhd z\big)=0,
            \end{align*}
            equivalently, $x \smallblackdiamond \dcb{y}{z}+y\smallblackdiamond \dcb{z}{x}+z \smallblackdiamond \dcb{x}{y}+ x \diamond (y\smallblackdiamond z)+ y\diamond (z\smallblackdiamond x)+ z\diamond (x\smallblackdiamond y)=0.$
        Hence, the second identity of an NS-Lie algebra also holds. In other words, $(A, \diamond, \smallblackdiamond)$ is an NS-Lie algebra.
    \end{proof}

    Let $(A, \ast , \curlyvee)$ be an NS-commutative algebra. Then it can be realized as an NS-algebra and hence an NS-pre-Lie algebra $(A, \rhd_0, \lhd_0, \circ_0)$, where $ x \rhd_0 y = y \lhd_0 x = x \ast y$ and $x \circ_0 y = x \curlyvee y$, for all $x, y \in A.$
    
    \begin{thm}\label{thm-nsf-semi-classical}
        Let $(A, \ast , \curlyvee)$ be an NS-commutative algebra with the corresponding NS-pre-Lie algebra $(A, \rhd_0, \lhd_0, \circ_0)$. Let $(A[\![t]\!], \rhd_t, \lhd_t, \circ_t)$ be an NS-pre-Lie algebra deformation of $(A, \rhd_0, \lhd_0, \circ_0)$.
        Define two bilinear operations $\diamond, \smallblackdiamond : A \times A \rightarrow A$ by
        \begin{align*}
            x \diamond y :=~& \frac{x \rhd_t y - y \lhd_t x}{t}\Big|_{t=0} = x \rhd_1 y- y \lhd_1 x,\\
            x \smallblackdiamond y :=~& \frac{x \circ_t y - y \circ_t x}{t}\Big|_{t=0} = x \circ_1 y - y \circ_1 x,
        \end{align*}
        for $x, y \in A$. Then $(A, \ast , \curlyvee , \diamond , \smallblackdiamond)$ is an NS-$F$-manifold algebra.
    \end{thm}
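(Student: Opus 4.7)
The plan is to follow the strategy of Theorem \ref{thm-defor}, now with three NS-pre-Lie axioms (\ref{nspl1})--(\ref{nspl3}) in place of the four NS-algebra axioms. First I would extend $\diamond$ and $\smallblackdiamond$ to ${\bf k}[\![t]\!]$-bilinear operations on $A[\![t]\!]$ by setting
\begin{align*}
x \diamond_t y := x \rhd_t y - y \lhd_t x \quad \text{and} \quad x \smallblackdiamond_t y := x \circ_t y - y \circ_t x.
\end{align*}
Applying Proposition \ref{prop-nspre-nsl} over the base ring ${\bf k}[\![t]\!]$, the triple $(A[\![t]\!], \diamond_t, \smallblackdiamond_t)$ is automatically an NS-Lie algebra. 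Under the identifications $x \rhd_0 y = y \lhd_0 x = x \ast y$ and the commutativity of $\curlyvee$, the zeroth-order terms of $\diamond_t$ and $\smallblackdiamond_t$ vanish, so $x \diamond_t y = t\,(x \diamond y) + O(t^2)$ and $x \smallblackdiamond_t y = t\,(x \smallblackdiamond y) + O(t^2)$. Equating coefficients of $t^2$ in the two NS-Lie identities for $(A[\![t]\!], \diamond_t, \smallblackdiamond_t)$ therefore yields the NS-Lie identities for $(A, \diamond, \smallblackdiamond)$, exactly mirroring the first half of the proof of Theorem \ref{thm-defor}.

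The remaining compatibilities (\ref{NSF1})--(\ref{NSF3}) will be obtained by extracting the order-$t^1$ coefficients of (\ref{nspl1}), (\ref{nspl2}), (\ref{nspl3}) written for $\rhd_t, \lhd_t, \circ_t$. The order-$t^0$ content is absorbed by the NS-commutative axioms of $(A, \ast, \curlyvee)$, so it does not enter. At order $t^1$ each axiom produces a four-variable relation among $\ast, \curlyvee$ and the first-order deformations $\rhd_1, \lhd_1, \circ_1$. A suitable linear combination of these relations --- rewritten through $x \diamond y = x \rhd_1 y - y \lhd_1 x$ and $x \smallblackdiamond y = x \circ_1 y - y \circ_1 x$ --- should reassemble into the defining expressions of $F_1, F_2, F_3$ given in Definition \ref{nsfm}. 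Concretely, I expect (\ref{NSF1}) to emerge from (\ref{nspl1}) by skew-symmetrizing the first argument in the $\rhd_t$ slot, and (\ref{NSF2}) to emerge from (\ref{nspl2}) after moving the $\lhd$-terms to the opposite side.

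The main obstacle is clearly the derivation of (\ref{NSF3}), which governs the most intricate of the three error terms. Unlike the Poisson setting of Theorem \ref{thm-defor}, where the compatibility between $\odot$ and $\dcb{~}{~}$ was a Leibniz rule admitting direct pairwise cancellation, here (\ref{NSF3}) is a derivation-type identity among error terms, so the analogue of the identities labelled (\ref{1})--(\ref{4}) in the proof of Theorem \ref{thm-defor} must combine in a much more delicate manner. The critical bookkeeping step will be to identify, in the order-$t^1$ expansion of (\ref{nspl3}) together with its $(x,y)$-swap, precisely the combination $F_3(x \odot y, z, w) + F_2(z, w, y \curlyvee x) - x \ast F_3(y, z, w) - y \ast F_3(x, z, w) - x \curlyvee(\cdots) - y \curlyvee(\cdots)$ that appears in the statement. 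Once this combinatorial identification is established, the remaining manipulations are routine and parallel in spirit to the zinbiel and dendriform semi-classical arguments of \cite{aguiar,liu-sheng-bai}.
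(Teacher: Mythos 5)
Your proposal follows essentially the same route as the paper: the NS-Lie structure on $(A,\diamond,\smallblackdiamond)$ is obtained by applying Proposition \ref{prop-nspre-nsl} over ${\bf k}[\![t]\!]$ and comparing coefficients of $t^2$, and the compatibilities (\ref{NSF1})--(\ref{NSF3}) are extracted from the coefficients of $t$ in (\ref{nspl1})--(\ref{nspl3}), which is exactly the step the paper itself leaves as a ``direct calculation similar to Theorem \ref{thm-defor}''. One small correction: the order-$t$ relations are three-variable (the axioms involve only $x,y,z$), so obtaining the four-variable identities (\ref{NSF1})--(\ref{NSF3}) requires substituting composite arguments such as $x\odot y$ into them and invoking the order-$t^0$ NS-commutative identities, not merely taking linear combinations in fixed variables --- but this is bookkeeping of the same kind the paper also omits.
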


    \begin{proof}
        Since $(A[\![t]\!], \rhd_t, \lhd_t, \circ_t)$ is an NS-pre-Lie algebra over the ring ${\bf k} [\![t]\!]$, it follows from Proposition \ref{prop-nspre-nsl} that $(A [\![t]\!], \diamond_t, \smallblackdiamond_t)$ is an NS-Lie algebra over ${\bf k} [\![t]\!]$, where
        \begin{align*}
            x \diamond_t y := x \rhd_t y - y \lhd_t x ~~~~ \text{ and } ~~~~ x \smallblackdiamond_t y := x \circ_t y - y \circ_t x, \text{ for } x, y \in A.
        \end{align*}
        In the NS-Lie algebra identities of $(A [\![t]\!], \diamond_t, \smallblackdiamond_t)$, if we compare the coefficient of $t^2$ in both sides of the identities, one obtains the NS-Lie algebra identities for $(A, \diamond, \smallblackdiamond)$ (similar to Theorem \ref{thm-defor}). In other words, $(A, \diamond, \smallblackdiamond)$ becomes an NS-Lie algebra.

        On the other hand, $(A[\![t]\!], \rhd_t, \lhd_t, \circ_t)$ is an NS-pre-Lie algebra implies that the identities (\ref{nspl1}), (\ref{nspl2}) and (\ref{nspl3}) hold if we replace $\rhd, \lhd, \circ$ by $\rhd_t, \lhd_t$ and $\circ_t$, respectively. In all these identities, we get a set of equations if we compare various powers of $t$. In particular, if we compare coefficients of $t$, we get three equations (each one comes from (\ref{nspl1}), (\ref{nspl2}), (\ref{nspl3}), respectively). Then, by a direct calculation (similar to the one given in Theorem \ref{thm-defor}), one can show that $(A, \ast, \curlyvee, \diamond, \smallblackdiamond)$ is an NS-$F$-manifold algebra.
    \end{proof}

    The NS-$F$-manifold algebra $(A, \ast, \curlyvee, \diamond, \smallblackdiamond)$ constructed in the above theorem is called the {\bf semi-classical limit} of the NS-pre-Lie algebra deformation $(A[\![t]\!], \rhd_t, \lhd_t, \circ_t)$ of the NS-commutative algebra $(A, \ast, \curlyvee).$ By taking $\curlyvee = 0$ in Theorem \ref{thm-nsf-semi-classical}, we get the following observation which is also a new result.

    \begin{remark}
        Let $(A, \ast)$ be a zinbiel algebra. Then it can be realized as an L-dendriform algebra $(A, \rhd_0 , \lhd_0)$ by setting $x \rhd_0 y = y \lhd_0 x = x \ast y$, for all $x, y \in A$. If $(A [\![t]\!], \rhd_t, \lhd_t )$ is an L-dendriform algebra deformation of $(A, \rhd_0, \lhd_0)$ then $(A, \ast, \diamond)$ is a pre-$F$-manifold algebra, where
                \begin{align*}
            x \diamond y :=~& \frac{x \rhd_t y - y \lhd_t x}{t}\Big|_{t=0} = x \rhd_1 y- y \lhd_1 x, \text{ for } x, y \in A.
            \end{align*}
            The pre-$F$-manifold algebra $(A, \ast, \diamond)$ is called the {\bf semi-classical limit} of the L-dendriform algebra deformation $(A [\![t]\!], \rhd_t, \lhd_t )$ of the zinbiel algebra $(A, \ast)$.
    \end{remark}

    


    \medskip

    \noindent {\bf Acknowledgements.} Anusuiya Baishya would like to acknowledge the financial support received from the Department of Science and Technology (DST), Government of India through INSPIRE Fellowship [IF210619]. Both authors thank the Department of Mathematics, IIT Kharagpur for providing the beautiful academic atmosphere where the research has been carried out.
    
\medskip

\noindent {\bf Conflict of interest statement.} On behalf of all authors, the corresponding author states that there is no conflict of interest.

\medskip

\noindent {\bf Data Availability Statement.} Data sharing does not apply to this article as no new data were created or analyzed in this study.


\begin{thebibliography}{BFGM03}

    \bibitem{aguiar} M. Aguiar, Pre-Poisson algebras, {\it Lett. Math. Phys.} 54, no.4 (2000), 263–277.


    \bibitem{bai} C. Bai, O. Bellier, L. Guo and X. Ni, Splitting of operations, Manin products and Rota-Baxter operators, {\em Int. Math. Res. Not. IMRN} 2012, Issue 3 (2012), 485-524. 

    \bibitem{bai-l} C. Bai, L. Liu and X. Ni, Some results on L-dendriform algebras, {\em J. Geom. Phys.} 60 (2010), 940-950.

    \bibitem{baxter} G. Baxter, An analytic problem whose solution follows from a simple algebraic identity, {\em Pacific J. Math.} 10 (1960), 731-742.


    \bibitem{caressa} P. Caressa, Examples of Poisson modules, I, {\em Rend. Circ. Mat. Palermo.} 52 (2003), 419-452.


    \bibitem{das} A. Das, Twisted Rota-Baxter operators and Reynolds operators on Lie algebras and NS-Lie algebras, {\it J. Math. Phys. } 62, no.9 (2021), 19 pp.

    \bibitem{das2} A. Das, Cohomology and deformations of twisted Rota-Baxter operators and NS-algebras, {\em J. Homotopy Relat. Struct.} 17 (2022), 233-262.

    \bibitem{dotsenko} V. Dotsenko, Algebraic structures of $F$-manifolds via pre-Lie algebras, {\em Annali di Mathematica} 198 (2019), 517-527.

    \bibitem{fard-guo} K. Ebrahimi-Fard and L. Guo, On products and duality of binary, quadratic, regular operads, {\em J. Pure Appl. Algebra} 200 (2005), 293-317.

    \bibitem{guo-lei} L. Guo and P. Lei, Nijenhuis algebras, NS-algebras and N-dendriform algebras, {\em Frontiers in Math.} 7 (2012), 827-846. 

    \bibitem{guo-zhang} L. Guo and Y. Zhang, The cohomology of relative cocycle weighted Reynolds operators and NS-pre-Lie algebras, {\em Comm. Algebra} to appear, \url{https://doi.org/10.1080/00927872.2023.2232853}

    \bibitem{hertling-manin} C. Hertling and Y. I. Manin, Weak Frobenius manifolds, {\em Int. Math. Res. Not. IMRN} 6 (1999), 277-286.


   \bibitem{hueb} J. Huebschmann, Poisson cohomology and quantization, {\em J. Reine Angew. Math.} 408 (1990), 57-113.

    \bibitem{kont} M. Kontsevich, Deformation quantization of Poisson manifolds, {\em Lett. Math. Phys.} 66 (2003), 157-216.

    \bibitem{kosmann} Y. Kosmann-Schwarzbach, From Poisson algebras to Gerstenhaber algebras, {\em Annales de I'institut Fourier} 46, no. 5 (1996), 1243-1274.

    \bibitem{kosmann-magri} Y. Kosmann-Schwarzbach and F. Magri, Poisson-Nijenhuis structures, {\em Annales de l'I.H.P. Physique th\'{e}orique} 53, no. 1 (1990),  pp. 35-81.

    \bibitem{leroux} P. Leroux, Construction of Nijenhuis operators and dendriform trialgebras, {\em Int. J. Math. Math. Sci.} 2004 (2004), 2595-2615.

    \bibitem{liu-sheng-bai} J. Liu, Y. Sheng and C. Bai, $F$-manifold algebras and deformation quantization via pre-Lie algebras, {\em J. Algebra} 559 (2020), 467-495.


    \bibitem{merkulov} S. A. Merkulov, Operads, deformation theory and $F$-manifolds, In: Frobenius manifolds. Aspects of Mathematics book series, vol. 36 (2004), 213-251.

    \bibitem{ospel} C. Ospel, F. Panaite and P. Vanhaecke, Generalized NS-algebras, arXiv preprint, \url{https://doi.org/10.48550/arXiv.2103.07530}

    \bibitem{rota} G.-C. Rota, Baxter algebras and combinatorial identities, I, II, {\em Bull. Amer. Math. Soc.} 75 (1969), 325-329; ibid, 75 (1969), 330-334.

    \bibitem{skryabin} S. Skryabin, Representations of the Poisson algebra in prime characteristic, {\em Math. Z.} 243, No. 3 (2003), 563-597.

    \bibitem{uchino} K. Uchino, Quantum analogy of Poisson geometry, related dendriform algebras and Rota–Baxter operators, {\em Lett. Math. Phys.} 85 (2008), 91-109.

    \bibitem{vall} B. Vallette, Manin products, Koszul duality, Loday algebras and Deligne conjecture, {\em J. Reine Angew. Math.} 620 (2008), 105-164.
\end{thebibliography}
\end{document}